\documentclass[a4paper]{amsart}

\usepackage{amsmath,amsthm,amssymb,enumerate}
\usepackage{epsfig}
\usepackage[utf8]{inputenc}
\usepackage[T1]{fontenc}
\usepackage{path}
\usepackage{ae,aecompl}
\usepackage{amsfonts}
\usepackage{amsxtra}
\usepackage{euscript,mathrsfs}
\usepackage{color}
\usepackage[left=3.4cm,right=3.4cm,top=4cm,bottom=4cm]{geometry}
\usepackage[citecolor=blue,colorlinks=true]{hyperref}
\allowdisplaybreaks

\newtheorem{assump}[equation]{Assumption}

\usepackage{enumitem}
\setenumerate{label={\rm (\alph{*})}}

\numberwithin{equation}{section}

\usepackage{amsfonts}
\usepackage{amsxtra}

\usepackage[frak,hyperref,theorem_section]{paper_diening}
\numberwithin{equation}{section}

\newcommand{\Div}{\divergence}

\newcommand{\R}{\mathbb R}

\newcommand{\px}{{p(\cdot)}}

\newcommand{\PP}{{\mathcal P}}

\newcommand{\PPln}{\mathcal{P}^{{\log}}}

\newcommand{\dd}{\mathrm d}
\newcommand{\dx}{\, \mathrm{d}x}
\newcommand{\dy}{\, \mathrm{d}y}

\newcommand{\ds}{\, \mathrm{d}\sigma}
\newcommand{\dt}{\, \mathrm{d}t}
\newcommand{\dxt}{\,\mathrm{d}x\, \mathrm{d}t}
\newcommand{\dxs}{\,\mathrm{d}x\, \mathrm{d}\sigma}
\newcommand{\dif}{\mathrm{d}}

\DeclareMathOperator{\diver}{div}

%%%%%%%%%%%%%%%%%%%%%%%%%%%%%%%%%%%%%%%%%%%%%%%%%%%%%%%%%%%%%%%%%%%%%%%%%%%%%%%%
%%%
%%% define the titlepage
%%%
%%%%%%%%%%%%%%%%%%%%%%%%%%%%%%%

%\setlength{\parindent}{0pt} %Setzt den Erstzeileneinzug auf Null.
%\setlength{\parskip}{\baselineskip} %Trennt Abs\"{a}tze durch eine zus\"{a}tzliche Leerzeile.
%\setlength{\oddsidemargin}{30mm}
%\setlength{\textwidth}{155mm}
%\setlength{\oddsidemargin}{3.0mm}
%\setlength{\topmargin}{-6.0mm}
%\setlength{\textheight}{236mm}

%\oddsidemargin (linker Rand allgemein bzw. f\"{u}r ungerade Seiten bei twoside und book)
%\evensidemargin (linker Rand f\"{u}r gerade Seiten (wirksam nur bei twoside und book))

\begin{document}

\title[Space-time approximation of parabolic systems with variable growth]{Space-time approximation of parabolic systems with variable growth}

\author{Dominic Breit \& Prince Romeo Mensah}
\footnotetext[1]{Department of Mathematics, Heriot-Watt University, Edinburgh EH14 4AS, UK. (d.breit@hw.ac.uk, pm27@hw.ac.uk)}
\address{Department of Mathematics, Heriot-Watt University, Riccarton Edinburgh EH14 4AS, UK}
\email{}

%
%
%\author{Martina Hofmanov\'a}
%\address[M. Hofmanov\'a]{Technical University Berlin, Institute of Mathematics, Stra\ss e des 17. Juni 136, 10623 Berlin, Germany}
%\email{hofmanov@math.tu-berlin.de}

\begin{abstract}
We study a parabolic system with $p(t,x)$-structure under Dirichlet boundary conditions. In particular, we deduce the optimal convergence rate for the error of the gradient of a finite element based space-time approximation. The error is measured in the quasi norm and the result holds if the exponent $p(t,x)$ is $(\alpha_t,\alpha_x)$-H\"{o}lder continuous.
\end{abstract}

\subjclass[2010]{65N15, 65N30, 35K55, 37M05}
\keywords{Parabolic PDEs, Nonlinear Laplace-type systems, Finite element methods, Space-time discretization}

\date{\today}

\maketitle

%
%
%\author{Dominic Breit}
%\ead{Breit@math.lmu.de}
%
%
%\address{LMU Munich,
%Mathematical Institute,
%Theresienstra\ss e 39,
%80333 Munich -- Germany}
%
%
%
%  \begin{abstract}
%We consider systems of stochastic evolutionary equations of the type
%$$d\bfu=\Div\,\bfS(\nabla \bfu)\,dt+\Phi(\bfu)d\bfW_t$$
%where $\bfS$ is a non-linear operator, for instance the $p$-Laplacian
%$$\bfS(\bfxi)=(\kappa+|\bfxi|)^{p-2}\bfxi,\quad \bfxi\in\R^{d\times D},$$
%with $p\in(1,\infty)$ and $\Phi$ grows linearly. We extend known results about 
%the deterministic problem to the stochastic situation. 
%  \end{abstract}
%
%  \begin{keyword}
%   Parabolic stochastic PDE's; Non-linear Laplacian-type systems; Existence of weak solutions; Regularity of solutions;\\
%
%   2010 MSC: 35R60\sep 35D30\sep
%    60H15\sep 35K55 \sep 35B65 
%  \end{keyword}
%
%\end{frontmatter}

\section{Introduction}
Let $\Omega\subset\R^n$, $n\geq 2$,   $T>0$ be finite and assume that $\bff:Q\rightarrow\mathbb{R}^N$ is given where we have set $Q:=(0,T)\times\Omega$. We are interested in the parabolic $p(t,x)$-Laplace system
\begin{align}
\left\{\begin{array}{rc}
\partial_t \bfu=\Div\big((\kappa+|\nabla\bfu|)^{p(t,x)-2}\nabla\bfu\big)+\bff& \mbox{in $Q=(0,T)\times\Omega $,}\\
\bfu=0\qquad\quad& \mbox{ \,on $(0,T)\times\partial\Omega$,}\\
\bfu(0,\cdot)=\bfu_0\qquad& \mbox{in $\Omega$.}\end{array}\right.\label{eq:heat}
\end{align}
%\begin{align}
%\partial_t \bfu=\Div\bfS(\nabla \bfu)+\bff\quad\text{in}\quad Q=(0,T)\times \Omega,\quad \bfu=0\quad\text{on}\quad(0,T)\times\partial\Omega,\quad \bfu(0,\cdot)=\bfu^0,
%\end{align}
%where the nonlinear tensor $\bfS$ is given by
%\begin{align}\label{eq:stress}
%\bfS(\nabla \bfu)=\bfS(t,x,\nabla \bfu)=(\kappa+|\nabla\bfu|)^{p(t,x)-2}\nabla\bfu
%\end{align}
with $\kappa\geq0$ and $p:Q\rightarrow(1,\infty)$ a given continuous function. Problem \eqref{eq:heat} is motivated by a model for electro-rheological fluids \cite{RajR96,Ruz00}. These are smart
materials whose viscosity depends on the applied electric field. This is modelled -- roughly speaking -- via a dependence of the viscosity on the electric field similar to the constitutive relation
\begin{align}
\label{eq:stress}
\bfS(\nabla\bfu)=\bfS(t,x,\nabla\bfu)=(\kappa+|\nabla\bfu|)^{p(t,x)-2}\nabla\bfu
\end{align}
describing the main part in \eqref{eq:heat}. One can consider \eqref{eq:heat} as a model problem which one has to understand before turning to the full
generalized Navier--Stokes system.\\
In the last two decades, there has been a huge effort in understanding the functional analytic
set-up for problems involving variable exponents resulting for instance in the monograph \cite{DiHaHaRu}. Related PDE models have been studied both from a purely mathematical perspective (see, e.g., \cite{AceM05} and \cite{AcMiSe}) as well as in view of the model for electro-rheological fluids (see, e.g., \cite{AcMi} and \cite{DMS}). From the analytical point of view, problems such as \eqref{eq:heat} seem to be well-understood. However, as far as the numerical approximation of PDEs involving variable exponents is concerned, there are only a few available results.
Recently, a finite-element based approximation of the $p(x)$-Laplacian
\begin{align}\label{pxlaplace}
-\Div\big((\kappa+|\nabla\bfu|)^{p(x)-2}\nabla\bfu)\big)=\bff,
\end{align}
the elliptic counterpart of \eqref{eq:heat}, has been studied in \cite{BrDS2}. A key point
is the analysis of interpolation operators on generalized Lebesgue spaces
\begin{align}\label{eq:Lpx}
  L^{\px}(\Omega)=\Bigset{f\in
    L^1(\Omega):\,\,\int_{\Omega}\abs{f}^{\px}\dx < \infty}
\end{align}
with $p:\Omega \rightarrow(1,\infty)$.
The main difficulty here is the lack of Jensen's inequality in this setting (this is overcome by a modification of Diening's key estimate for generalized Lebesgue spaces \cite{Die04} under a certain continuity assumption on $p$) and an additional error term appears, cf. Theorem \ref{thm:app_V}.  Based on this, the error between the exact solution $\bfu$ to
\eqref{pxlaplace} and its finite element approximation $\bfu_h$ (where $h$ denotes the discretization parameter, see Section \ref{subsec:FEM} for more details) has been analysed in \cite{BrDS2}. The result is the convergence rate
\begin{align}\label{eq:2}
\|\bfF(\cdot,\nabla\bfu)-\bfF(\cdot,\nabla\bfu_h)\|_2^2\leq\,c\,\mathrm h^{2\alpha},
\end{align}
where $p$ is assumed the be $\alpha$-H\"older continuous with $0<\alpha\leq1$.
Here, the quantity $\bfF(x,\bfxi)=(\kappa+|\bfxi|)^{\frac{p(x)-2}{2}}\bfxi$
linearises equation \eqref{pxlaplace} in a certain sense and is typically used to study the regularity properties of solutions. The natural regularity which is required for
\eqref{eq:2} is
\begin{align}\label{reg:x}
\bfF(\cdot,\nabla \bfu)\in \big(W^{\alpha,2}(\Omega)\big)^{N\times n},
\end{align}
where $W^{\alpha,2}(\Omega)$ is the fractional Sobolev space with differentiability $\alpha$, see Section \ref{sec:fracSob} for details. Related results are given in \cite{DeMa}, where the two-dimensional subquadratic case is studied, and in \cite{BeBrDi}, where the finite element approximation of a steady model for electro-rheological fluids has been analysed. 
The error estimate \eqref{eq:2}
generalises earlier results for the $p$-Laplacian (where $p\in(1,\infty)$ is constant)
from \cite{DR}. We remark that quasi-norms such as the left-hand side in \eqref{eq:2} have been introduced in \cite{LiuBar93rem} to study the error in the numerical approximation of the $p$-Laplacian.\\
In view of the results for \eqref{pxlaplace} the next natural step is to study the numerical approximation of parabolic PDEs with variable exponents such as \eqref{eq:heat}. We aim at approximating equation
\eqref{eq:heat} by an implicit Euler scheme in time and finite elements in space.
If $\{0=t_0<\cdots<t_M=T\}$ is a uniform partition of $[0,T]$ with mesh size $\Delta t=T/M$, we approximate $\bfu(t_m)$ by $\bfu_{m,h}$, where $h$ refer to the discretisation in space (as in the elliptic case). The precise algorithm can be found in equation \eqref{tdiscr}. Under suitable conditions on the data and mesh size, see \eqref{assumpt}--\eqref{assumpt2} below, our main result, given in Theorem \ref{thm:4}, states the following error estimate
\begin{align}\label{eq:error0}
\begin{aligned}
\max_{1\leq m\leq M}\Vert\bfu(t_m,\cdot)-\bfu_{m,h}\Vert_2^2+\Delta t &\sum_{m=1}^M\Vert\bfF(t_m,\cdot,\nabla\bfu(t_m))-\bfF(t_m,\cdot, \nabla\bfu_{m,h})\Vert_2^2
\\
&\leq \,c\,\big( \mathrm{h}^{2\alpha_x}+(\Delta t)^{2\alpha_t}\big),
\end{aligned}
\end{align}
where  $\bfF(t,x,\bfxi)=(\kappa+|\bfxi|)^{\frac{p(t,x)-2}{2}}\bfxi$.
Here, $p:\overline{Q}\rightarrow(1,\infty)$ is assumed to be $\alpha_x$-H\"older-continuous with respect to the spatial variable and $\alpha_t$-H\"older-continuous with respect to time, where $\alpha_x\in(0,1]$ and $\alpha_t\in(\frac{1}{2},1]$. The proof of \eqref{eq:error0} requires a careful analysis of the time and space-dependence of the variable exponent $p$. The constant $c$ depends on the geometry of $\Omega$, on $p$, as well as some quantities involving $\bfu$ arising from the following regularity properties: 
\begin{align}\label{eq:regt1}
&\bfF(\cdot,\cdot, \nabla\bfu)\in \big(L^2(0,T;W^{\alpha_x,2}(\Omega)))\big)^{N\times n},\\
\label{eq:regt2}&\bfF(\cdot,\cdot, \nabla\bfu) \in \big(W^{\alpha_t,2}(0,T;L^2(\Omega))\big)^{N\times n},\\
\label{eq:regt3}&\bfu\in \big(L^\infty(0,T;W^{\alpha_x,2}(\Omega))\big)^N.
%\sup_{h>0}\frac{1}{h}&\,\E\int_0^{T-h} \big\|\bfF(\nabla \bfu(\sigma+h))-\bfF(\nabla \bfu(\sigma))\big\|_{L^2( G)}^2\,\dd \sigma<\infty.
\end{align}
The relations \eqref{eq:regt1}--\eqref{eq:regt3} are the natural counterpart to the regularity from the elliptic case, cf. \eqref{reg:x}.
We give a proof of \eqref{eq:regt1}--\eqref{eq:regt3} in Theorem \ref{thm:quasispace} under natural assumptions on the data. Let us remark that the left-hand side of \eqref{eq:error0} is only defined provided the mapping
$$[0,T]\ni t\mapsto \bfF(t,\cdot,\nabla\bfu(t))\in \big(L^2(\Omega)\big)^{N\times n}$$
is continuous in time.
Given the regularity in \eqref{eq:regt2}, this holds if and only if $\alpha_t> \frac{1}{2}$ (cf. Theorem \ref{thm:sobboch}).  So, the estimate \eqref{eq:error0} must fail otherwise. This problem is not a consequence of the variable exponents but already appears for constant $p$ and even in the linear case $p=2$. In order to include the case $\alpha_t\leq \frac{1}{2}$ it is necessary to develop a new error estimator.
\\
In comparison with the $p$-Laplacian (where $p$ is constant), let us also mention that
\eqref{eq:error0} yields the same convergence rate (linear convergence with respect to both discretization parameters)
if $p$ is smooth enough (if $\alpha_x=\alpha_t=1$). Convergence rates for the $p$-Laplacian are given in \cite{DER} (see also \cite{BaLi2} for the case $p\geq2$ and \cite{EbLi} for $p>\frac{3n}{n+2}$).
%Let $\{0=t_0<\cdots<t_M=T\}$ be a uniform partition of $[0,T]$ with mesh size $\Delta t=T/M$. Let $\bfu_{0,h}:=\Pi_h\bfu_0$ and for $m\in\{1,\dots,M\}$, we let $\bff_m=(\Delta t)^{-1}\int_{I_m}\bff\ds$, where $I_m=(t_{m-1},t_m)$. Then for every such $m\in\{1,\dots,M\}$, we find  $\bfu_{m,h}\in L^2(\Omega;V_h)$ such that
%for all $\bfphi\in V_h$ it holds
%\begin{align}\label{tdiscr}
%\begin{aligned}
%\int_\Omega\bfu_{m,h}(x)\cdot\bfvarphi(x) \dx &+\Delta t\int_\Omega\bfS(t_m,x ,\nabla\bfu_{m,h}(x)):\nabla\bfphi(x)\dx\\&=\int_\Omega\bfu_{m-1,h}(x)\cdot\bfvarphi(x) \dx
%+\Delta t\int_\Omega\bff_{m}(x)\cdot\bfvarphi(x) \dx.
%\end{aligned}
%\end{align}
The only result for the numerical approximation of parabolic problems involving variable exponents is \cite{CP}. The authors of \cite{CP} study the full generalized Navier--Stokes equation under the assumption that $p>\frac{3n+2}{n+2}$. This is of course much more complicated than \eqref{eq:heat} but only leads to a much weaker result: convergence of a subsequence without any rates. Convergence rates for the numerical approximation of parabolic problems such as \eqref{eq:heat} are not known at all.\\
%Convergence rate seem to be out of reach at the current state of art and the best one can hope for is the convergence (up to a subsequence).
%Electrorheological fluids have many industrial applications such as clutches and shock absorbers. 
%Further applications of the variable exponent spaces can be found in the area of image reconstruction. Here, the variable exponent is used to describe different smoothing properties according the edge detector, see \cite{CheLevRao06}.
Our paper is organized as follows. In Section \ref{sec:prel} we collect some basic material regarding function spaces (generalized Lebesgue spaces and fractional Sobolev spaces) as well as finite elements. 
Our main result, the error estimate \eqref{eq:error0}, is stated and proved in Section \ref{sec:error}. In Section \ref{sec:reg} we analyse the regularity of solutions to \eqref{eq:heat} and prove \eqref{eq:regt1}--\eqref{eq:regt3}. Finally we collect some elementary algebraic relations for Young functions in the appendix.

\begin{remark}
All results of this paper remain valid under more general assumptions on the operator $\bfS$ for which \eqref{eq:stress} is the model case. In fact, it is sufficient to assume that $\bfS:\overline{Q}\times \R^{N\times n}$ is continuous and satisfies
\begin{align}
\lambda (\kappa+|\bfeta|)^{p(t,x)-2}|\bfxi|^2&\leq D_\bfeta\bfS(t,x,\bfeta)(\bfxi,\bfxi)\leq\,\Lambda (\kappa+|\bfeta|)^{p(t,x)-2}|\bfxi|^2,\label{growth1}\\
|\bfS(t,x,\bfeta)-\bfS(s,y,\bfeta)|&\leq\,c\,(|t-s|^{\alpha_t}+|x-y|^{\alpha_x})\big(1+|\ln(\kappa+|\bfeta|)|\big)
\label{growth2}\\
&\quad\times \Big( (\kappa+|\bfeta|)^{p(t,x)-2} + (\kappa+|\bfeta|)^{ p(s,y)-2} \Big) |\bfeta| ,\nonumber
\end{align}
for all $(t,x),(s,y)\in \overline{Q}$ and all $\bfeta,\bfxi\in\R^{N\times n}$ with positive constants $\lambda,\Lambda,c$ and $\kappa\geq0$.
In particular, modulus dependence of $\bfS$ is not necessary. A detailed proof that
equation \eqref{eq:stress} implies \eqref{growth1} and \eqref{growth2} is given in Lemma \ref{lemma:growth} in the appendix.
\end{remark}

\section{Preliminaries}
\subsection{Variable exponent spaces}
\label{sec:prel}
\noindent
%For a measurable set $E \subset \Rn$ let $\abs{E}$ be the Lebesgue
%measure of~$E$ and $\chi_E$ its characteristic function. For $0 <
%\abs{E} < \infty$ and $f \in L^1(E)$ we define the mean value of $f$
%over $E$ by
%\begin{align*}
%   \mean{f}_E:=\dashint_{E}f \,dx:=\frac{1}{\abs{E}}\int_E f \,dx.
%\end{align*}
%For an open set $\Omega \subset \Rn$ let $L^0(\Omega)$ denote the set
%of measurable functions.
Let us introduce the variable exponent Lebesgue space $L^\px(\mathcal O)$, where $\mathcal O\subset \mathbb R^M$ for $M\geq2$, either stands for the domain $\Omega\subset\mathbb R^n$ or for the parabolic cylinder $Q\subset\R^{n+1}$.  We use the
same notation as in the recent book~\cite{DiHaHaRu}.  We define
$\PP(\mathbb R^M)$ to consist of all measurable functions $p\,:\, \mathbb R^M \to
[1,\infty]$ (called variable exponents).  For $p \in\PP(\mathbb R^M)$ we define
$p^- := \essinf p$ and $p^+:= \esssup p$.
%Moreover, let $p^+ := p^+_{\Rn}$ and $p^- := p^-_{\Rn}$.
For $p \in \PP(\mathbb R^M)$, the generalized Lebesgue space $L^{\px}(\mathcal O)$ is
defined as
\begin{align*}
  L^\px(\mathcal O):=\biggset{f \in L^1(\mathcal O) \,:\,
    \norm{f}_{L^\px(\mathcal O)} < \infty},
\end{align*}
where
\begin{align*}
  \norm{f}_{\px}:=\norm{f}_{L^\px(\mathcal O)}:=
  \inf\biggset{\lambda>0\,:\,\int_{\mathcal O} \biggabs{
      \frac{f(x)}{\lambda}}^{p(x)} \,dx \leq 1}.
\end{align*}
Many properties can be transferred from the classical $L^p$-spaces. In particular we have the following.
\begin{itemize}
\item $L^\px(\mathcal O)$ is uniformly convex and reflexive provided $1<p^{-}\leq p^{+}<\infty$.
\item $C^\infty_0(\mathcal O)$ is dense in $L^\px(\mathcal O)$ provided $p^+<\infty$.
\item The dual of $L^{\px}(\mathcal O)$ is $L^{p'(\cdot)}(\mathcal O)$ provided $p^+<\infty$. Here the dual exponent $p'(\cdot)$ is computed pointwise by $p'(x)=p(x)/(p(x)-1)$.
\item H\"older's inequality holds, i.e. we have
\begin{align*}
\|fg\|_1\leq\,2\|f\|_{\px}\|g\|_{p'(\cdot)}
\end{align*}
for all $f\in L^{\px}$ and $g\in L^{p'(\cdot)}$.
\end{itemize} 
% The generalized Sobolev space $W^{1,\px}(\Omega)$ consists of those
% $L^1_{\loc}$-functions whose norm
% \begin{align*}
%   \norm{f}_{W^{1,\px}(\Omega)}= \norm{f}_{L^\px(\Omega)}
%   +\norm{\nabla f}_{L^\px(\Omega)};
% \end{align*}
% is finite, where $\nabla f$ is the distributional derivative of $f$.
% If $p$ is constant, then $L^\px$ and $W^{1,\px}$ coincide with the
% classical Lebesgue and Sobolev spaces.
%
Of course we can define Sobolev spaces with variable exponents in a natural way by setting
\begin{align*}
W^{1,\px}(\mathcal O):=\set{v\in W^{1,1}(\mathcal O)\, : \,v\in L^{\px}(\mathcal O),\,\,\nabla v\in (L^{\px}(\mathcal O))^n}.
\end{align*}
In order to study more advanced properties of $L^{\px}$, some mild regularity is needed.
We say that a function $g \colon \mathbb R^M \to \setR$ is {\em
  $\log$-H{\"o}lder continuous} on $\mathcal O$ if there exist constants
$c \geq 0$ and $g _\infty \in \setR$ such that
\begin{align*}
  % \label{ineq:log_cont}
  \abs{g (x)-g (y)} &\leq \frac{c}{\log
    (e+1/\abs{x-y})}\quad\text{and}\quad \abs{g (x) - g _\infty}
  \leq \frac{c}{\log(e + \abs{x})},
\end{align*}
for all $x\not=y\in \mathcal O$. The first condition describes the so called
local $\log$-H{\"o}lder continuity and the second, the decay condition.
The smallest such constant~$c$ is the $\log$-H{\"o}lder constant
of~$g$. We define $\PPln(\mathbb R^M)$ to consist of those exponents $p\in
\PP(\mathbb R^M)$ for which $\frac{1}{p} \,:\, \mathbb R^M \to [0,1]$ is $\log$-H{\"o}lder
continuous. By $p_\infty$, we denote the limit of~$p$ at infinity,
which exists for $p \in \PPln(\mathbb R^M)$.  If $p \in \PP(\mathbb R^M)$ is bounded, then $p
\in \PPln(\mathbb R^M)$ is equivalent to the $\log$-H{\"o}lder continuity of $p$.
However, working with $\frac 1p$ gives better control of the constants
especially in the context of averages and maximal functions.
Therefore, we define $c_{\log}(p)$ as the $\log$-H{\"o}lder constant
of~$1/p$. Expressed in~$p$, we have for all $x,y \in \mathbb R^M$
\begin{align*}
  \abs{p(x) - p(y)} \leq \frac{(p^+)^2 c_{\log}(p)}{\log
    (e+1/\abs{x-y})} \quad \text{and }\quad \abs{p(x) - p_\infty}
  \leq \frac{(p^+)^2 c_{\log}(p)}{\log(e + \abs{x})}.
\end{align*}
%For $v\in L^1(\R^d)$, we define the Hardy-Littlewood maximal function
%\begin{equation}\label{HLM}
%M(v)(x):=\sup_{R>0}\fint_{B_R(x)}\left|v\right|\dx.
%\end{equation}
%For any $\bv\in W^{1,1}(\R^d)^d$ we set $M(\bv):=M(|\bv|)$ and $M(\nabla\bv):=M(|\nabla\bv|)$.
%% For $t \geq 0$ and $q \in [1,\infty)$ we define
%%   $\phi_q(t) := t^q$
%\newpage
%\begin{lemma}[\citet{DiHaHaRu}[Thm. 4.3.8 \& Cor. 4.4.12]]\label{lem:new}
%\begin{itemize}
%\item[(a)] Let $p \in \PPln(\R^d)$ with $p^->1$. Then $M$ is continuous from $L^{\px}(\R^d)$ to $L^{\px}(\R^d)$, i.e., there exits a constant $c_p>0$ such that
%\begin{equation}\label{HL1}
%\|M(v)\|_{L^{\px}(\R^d)}\le c_p\|v\|_{L^{\px}(\R^d)}\quad \forall v\in L^{\px}(\R^d).
%\end{equation}
%\item[(b)] Let $p \in \PPln(\R^d)$. Then $M$ is of weak type $(\px,\px)$, i.e., there exists  a constant $c_p>0$ such that
%\begin{equation}\label{HL2}
%\sup_{\lambda>0}\|\lambda\chi_{\left\{M(v)>\lambda\right\}}\|_{L^{\px}(\R^d)}\le c_p\|v\|_{L^{\px}(\R^d)} \quad \forall v\in L^{\px}(\R^d)
%\end{equation}
%\end{itemize}
%\end{lemma}
It also turns out that  $C^\infty(\overline{\mathcal O})$ is dense in $W^{1,\px}(\mathcal O)$ provided $p \in \PPln(\R^M)$ with $p^+<\infty$. In this case we can define boundary values in the natural way. In particular, $W^{1,\px}_0(\mathcal O)$ is defined as the closure of $C^\infty_0(\mathcal O)$ in $W^{1,\px}(\mathcal O)$. We conclude this subsection by an elemenatry lemma which allows to replace $p(x)$ by $p(y)$ under certain assumptions.
%For a cube $K \subset \setR^n$ we denote by $\ell(K)$ its side length and we have the following results.
\begin{lemma}[Lemma 2.1 in~\cite{BrDS2}]
  \label{lem:pxpy}  
  Let $p \in \PPln(\mathbb R^M)$ with $p^+<\infty$ and $\ell>0$. Then for every
  cube $K \subset \mathbb R^M$ with $|K|\leq 1$, $\kappa\in [0,1]$ and $t\geq
  0$ such that $\abs{K}^\ell \leq \rho \leq \abs{K}^{-\ell}$, we have that 
  \begin{align*}
%     \frac{\phi''_{p(x)}(t)}{\phi''_{p(y)}(t)} \approx 
    (\kappa + \rho)^{p(x)-p(y)} \leq c,
  \end{align*}
  for all $x,y \in K$. The constant depends on $c_{\log}(p)\,,\ell$ and
  $p^+$.
\end{lemma}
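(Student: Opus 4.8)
The plan is to pass to logarithms and estimate the two factors in $\bigl(p(x)-p(y)\bigr)\ln(\kappa+\rho)$ separately: the oscillation $\abs{p(x)-p(y)}$ through the log-Hölder continuity of $p$ recalled before the lemma, and $\abs{\ln(\kappa+\rho)}$ through the two-sided constraint $\abs{K}^\ell\le\rho\le\abs{K}^{-\ell}$. One may assume $x\neq y$, since otherwise the left-hand side equals $1$. I will write $L:=\abs{K}\in(0,1]$ and let $s:=L^{1/M}$ denote the side length of $K$, so that $\abs{x-y}\le\operatorname{diam}K=\sqrt{M}\,s$ for all $x,y\in K$.

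For the logarithmic factor: since $\kappa\in[0,1]$ and $L^{-\ell}\ge 1$ (because $L\le1$), one has $L^\ell\le\rho\le\kappa+\rho\le 1+L^{-\ell}\le 2L^{-\ell}$, so both $\kappa+\rho$ and its reciprocal are bounded by $2L^{-\ell}$, whence $\abs{\ln(\kappa+\rho)}\le\ln 2+\ell\ln(1/L)\le(1+\ell)\ln(1/L)$ once $L\le\tfrac12$. For the oscillation factor: the log-Hölder estimate gives $\abs{p(x)-p(y)}\le (p^+)^2c_{\log}(p)\big/\ln\!\bigl(e+1/\abs{x-y}\bigr)$, and since $\abs{x-y}\le\sqrt M\,s$ the denominator is at least $\ln\!\bigl((\sqrt M)^{-1}L^{-1/M}\bigr)=\tfrac1M\ln(1/L)-\tfrac12\ln M$, which exceeds $\tfrac1{2M}\ln(1/L)$ once $L$ drops below a threshold depending only on $M$. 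Multiplying the two bounds then shows $\bigl(p(x)-p(y)\bigr)\ln(\kappa+\rho)\le 2M(p^+)^2c_{\log}(p)(1+\ell)$, i.e. the asserted inequality with $c=\exp\!\bigl(2M(p^+)^2c_{\log}(p)(1+\ell)\bigr)$. The complementary range of $L$, which is bounded away from $0$, is handled trivially: there $\kappa+\rho\in[L^\ell,1+L^{-\ell}]$ stays in a fixed compact subinterval of $(0,\infty)$ while $\abs{p(x)-p(y)}\le p^+$, so $(\kappa+\rho)^{p(x)-p(y)}$ is controlled by a fixed constant, and one finishes by taking the larger of the two constants.

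The one step needing attention is the balance in the oscillation estimate: the diameter of $K$ scales only like $L^{1/M}$, so the logarithmic gain from log-Hölder continuity is of order $\tfrac1M\ln(1/L)$, whereas $\abs{\ln(\kappa+\rho)}$ can be as large as $\ell\,\ln(1/L)$; one must check that the former still dominates up to a multiplicative constant. This is why the resulting $c$ depends polynomially on $\ell$ and $p^+$ (and, via $M$, on the dimension) but remains finite. Beyond this bookkeeping I expect no real difficulty; the whole argument should be short given the explicit modulus of continuity stated before the lemma.
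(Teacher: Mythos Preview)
The paper does not prove this lemma; it is quoted verbatim from \cite{BrDS2} (their Lemma~2.1) and no argument is given here. Your proof is correct and is the standard one for this type of estimate: pass to logarithms, control $\abs{p(x)-p(y)}$ by the log-H\"older modulus, control $\abs{\ln(\kappa+\rho)}$ via the two-sided bound $\abs{K}^\ell\le\rho\le\abs{K}^{-\ell}$, and observe that the two factors balance to a constant independent of $\abs{K}$. The case distinction between $\abs{K}$ below a dimensional threshold and $\abs{K}$ bounded away from zero is also the usual way to make the balancing rigorous.
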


\subsection{Fractional Sobolev spaces}
\label{sec:fracSob}
In this subsection we collect basic material on
fractional Sobolev spaces. We define for $p\in(1,\infty)$ and $\alpha\in(0,1)$
the norm
\begin{align*}
\|u\|_{W^{\alpha,p}(\Omega)}:= \left(\|u\|^p_{L^{p}(\Omega)}+\int_\Omega\int_\Omega\frac{|u(y)-u(x)|^p}{|y-x|^{n+\alpha p}}\,\dd x\,\dd y \right)^\frac{1}{p}.
\end{align*}
The space $W^{\alpha,p}(\Omega)$ is now defined as the subspace of $L^{p}(\Omega)$
consisting of the functions having finite $W^{\alpha,p}(\Omega)$-norm. Subsequently in our computations, we will use the following notations to represent the seminorm 
$$
\left[  u \right]^p_{W^{\alpha,p}(\Omega)} :=
\int_\Omega\int_\Omega\frac{|u(y)-u(x)|^p}{|y-x|^{n+\alpha p}}\,\dd x\,\dd y
$$
for the fractional derivative in space. 
In the following two lemmas we recall the fundamental embedding properties for fractional order Sobolev spaces. (For the proof we refer to  \cite[Chapter 7]{Ad}, for Lemma \ref{le2} see also \cite[Lemma 2.5]{KrMi}).
\begin{lemma}\label{lema}
Let $\Omega\subset\R^n$ be a  bounded Lipschitz domain, $1< p<\infty$ and  $\alpha\in\big(0,\frac{n}{p})$.
The embedding $$W^{\alpha,p}(\Omega)\hookrightarrow L^{\frac{np}{n-\alpha p}}(\Omega)$$ is continuous.
\end{lemma}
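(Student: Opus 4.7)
The plan is to establish the embedding in two classical steps: a reduction to the whole space via extension, followed by the Sobolev--Gagliardo inequality on $\R^n$.

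For the \emph{reduction step}, I would invoke Stein's extension theorem in its fractional version for bounded Lipschitz domains (see \cite[Chapter 7]{Ad}): there exists a bounded linear operator $E: W^{\alpha,p}(\Omega) \to W^{\alpha,p}(\R^n)$ satisfying $(Eu)|_\Omega = u$, with operator norm depending only on $\Omega$, $\alpha$ and $p$, and with $Eu$ supported in a fixed bounded neighborhood of $\overline{\Omega}$. This reduces the claim to proving
\[
\|v\|_{L^{p^*}(\R^n)} \leq C \|v\|_{W^{\alpha,p}(\R^n)}, \qquad p^* = \tfrac{np}{n - \alpha p},
\]
for every $v \in W^{\alpha,p}(\R^n)$, from which the desired estimate follows by applying the inequality to $Eu$ and restricting to $\Omega$.

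For the \emph{whole-space embedding}, I would use the Riesz potential / Bessel potential machinery. The main ingredients are: (i) the equivalence of the Slobodeckij seminorm $[v]_{W^{\alpha,p}(\R^n)}$ with the Bessel potential norm $\|(-\Delta)^{\alpha/2} v\|_{L^p(\R^n)}$ for $1 < p < \infty$ and $\alpha \in (0,1)$, which follows from the Strichartz characterization of Bessel potentials via finite differences and rests on Mikhlin--H\"ormander multiplier theory; and (ii) the Hardy--Littlewood--Sobolev inequality, asserting that the Riesz potential $I_\alpha$ maps $L^p(\R^n)$ continuously into $L^{p^*}(\R^n)$ precisely when $\alpha < n/p$. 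Writing $v = I_\alpha g$ with $g := (-\Delta)^{\alpha/2} v$, one concludes that
\[
\|v\|_{L^{p^*}(\R^n)} = \|I_\alpha g\|_{L^{p^*}(\R^n)} \leq C \|g\|_{L^p(\R^n)} \leq C \|v\|_{W^{\alpha,p}(\R^n)},
\]
which is the inequality required on $\R^n$.

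The \emph{main obstacle} is the norm equivalence in (i): this is where the restriction $1 < p < \infty$ genuinely enters, through the boundedness of Littlewood--Paley projectors on $L^p$. A more elementary alternative that avoids Fourier multipliers is the Hedberg-type pointwise estimate, bounding $|v(x)|$ by a power of $[v]_{W^{\alpha,p}(\R^n)}$ times a power of the Hardy--Littlewood maximal function $Mv(x)$, and then invoking the $L^p$-boundedness of $M$ (again using $1 < p < \infty$); this is the route taken, for instance, in the Di Nezza--Palatucci--Valdinoci exposition. The assumption $\alpha < n/p$ is precisely what ensures $p^* < \infty$ and that the Riesz potential / interpolation exponent is admissible at the endpoint Lebesgue scale.
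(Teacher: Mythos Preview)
Your argument is correct and follows one of the standard routes to the fractional Sobolev embedding: extend to $\R^n$ via a Stein-type operator for Lipschitz domains, then use either the Bessel-potential equivalence together with Hardy--Littlewood--Sobolev, or the Hedberg maximal-function approach. Both variants you sketch are valid under the stated hypotheses $1<p<\infty$ and $\alpha\in(0,n/p)$.

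There is nothing to compare against, however: the paper does not prove this lemma at all. It is stated as a background fact with the parenthetical ``For the proof we refer to \cite[Chapter 7]{Ad}'', and no argument is given. So your proposal is not a reconstruction of the paper's proof but rather a self-contained justification of a result the authors take as known. That is fine as an exercise, but if your goal is to match the paper, the appropriate move is simply to cite Adams and move on.
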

\begin{lemma}\label{le2}
Let $\Omega\subset\R^n$ be a bounded Lipschitz domain and $q\in(1,\infty)$. Let $w\in
L^{q}(\Omega)$ and for some $\beta\in(0,1]$, $M>0$ and an open set $\tilde\Omega\Subset\Omega$, assume we have
$$
\sum_{\gamma=1}^{n}\int_{\tilde\Omega}|w(x+he_\gamma)-w(x)|^{q}\ dx\leq
M^{q} |h|^{q\beta}
$$
for every $h$ with $|h|<\mathrm{dist}(\tilde\Omega,\partial\Omega)$. Then $w\in
W^{k,q}_{loc}(\tilde\Omega)$ for every $k\in(0,\beta)$ and for each open set $A\Subset\tilde\Omega$ there is a constant $c=c(k,\mathrm{dist}(A,\partial\tilde\Omega))$, independent of $M$ and $w$, such that
$$
\|w\|_{W^{k,q}(A)}\leq
c\left(M+\|w\|_{L^{q}(\Omega)}\right).
$$
\end{lemma}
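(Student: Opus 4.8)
The plan is to deduce the estimate on $A$ directly (which in particular yields $w\in W^{k,q}_{loc}(\tilde\Omega)$), by controlling the Gagliardo seminorm $[w]_{W^{k,q}(A)}$; the $L^q$-part of the norm is harmless since $\|w\|_{L^q(A)}\le\|w\|_{L^q(\Omega)}$. I would fix an open set $A\Subset\tilde\Omega$ and set $\delta:=\tfrac12\min\{\mathrm{dist}(A,\partial\tilde\Omega),\mathrm{dist}(\tilde\Omega,\partial\Omega)\}$. Substituting $h:=y-x$ in the inner integral, applying Tonelli's theorem, and splitting according to whether $|h|<\delta$ or $|h|\ge\delta$, one arrives at
\begin{align*}
[w]_{W^{k,q}(A)}^q\;\le\;\int_{|h|<\delta}\frac{1}{|h|^{n+kq}}\Big(\int_{A}|w(x+h)-w(x)|^q\dx\Big)\,\dd h\;+\;\delta^{-n-kq}\int_{A}\int_{A}|w(x)-w(y)|^q\dx\dy .
\end{align*}
The second term is at most $c(\delta,n,q,|A|)\,\|w\|_{L^q(\Omega)}^q$ by the crude bound $|w(x)-w(y)|^q\le 2^{q-1}(|w(x)|^q+|w(y)|^q)$, so everything reduces to the first term.

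To treat the first term I would reduce an arbitrary increment to coordinate increments by a telescoping sum. Writing $h=\sum_{\gamma=1}^n h_\gamma e_\gamma$ and $z_\gamma:=x+\sum_{i<\gamma}h_i e_i$, one has $w(x+h)-w(x)=\sum_{\gamma=1}^n\bigl(w(z_\gamma+h_\gamma e_\gamma)-w(z_\gamma)\bigr)$, so by convexity of $t\mapsto t^q$,
\begin{align*}
\int_A|w(x+h)-w(x)|^q\dx\;\le\;n^{q-1}\sum_{\gamma=1}^n\int_A|w(z_\gamma+h_\gamma e_\gamma)-w(z_\gamma)|^q\dx .
\end{align*}
Since $|z_\gamma-x|=\bigl|\sum_{i<\gamma}h_ie_i\bigr|\le|h|<\delta$, the shift $x\mapsto z_\gamma$ maps $A$ into $\tilde\Omega$; enlarging the integration domain of each summand to $\tilde\Omega$ (allowed as the integrand is nonnegative) and then applying the hypothesis with the scalar increment $h_\gamma$ — legitimate because each summand on its left-hand side is nonnegative, hence dominated by the full sum — bounds the $\gamma$-th term by $M^q|h_\gamma|^{q\beta}\le M^q|h|^{q\beta}$. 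Therefore $\int_A|w(x+h)-w(x)|^q\dx\le n^qM^q|h|^{q\beta}$ for all $|h|<\delta$.

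Inserting this into the first term gives
\begin{align*}
\int_{|h|<\delta}\frac{n^qM^q|h|^{q\beta}}{|h|^{n+kq}}\,\dd h\;=\;n^qM^q|\s^{n-1}|\int_0^\delta r^{q(\beta-k)-1}\,\dd r\;=\;\frac{n^q|\s^{n-1}|}{q(\beta-k)}\,M^q\,\delta^{q(\beta-k)} ,
\end{align*}
the integral being convergent precisely because $q(\beta-k)>0$; this is the sole place where the hypothesis $k\in(0,\beta)$ is used. Combining with the far-diagonal estimate yields $[w]_{W^{k,q}(A)}^q\le c\bigl(M^q+\|w\|_{L^q(\Omega)}^q\bigr)$ with $c$ depending only on $n,q,k,|A|$ and $\delta$ (hence on $\mathrm{dist}(A,\partial\tilde\Omega)$) and independent of $M$ and $w$, and adding $\|w\|_{L^q(A)}\le\|w\|_{L^q(\Omega)}$ gives $\|w\|_{W^{k,q}(A)}\le c\bigl(M+\|w\|_{L^q(\Omega)}\bigr)$. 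I expect the only point demanding care to be the geometric bookkeeping in the telescoping step — keeping all intermediate translates $z_\gamma$ and $z_\gamma+h_\gamma e_\gamma$ inside $\tilde\Omega$ so that $w$ is defined there and the hypothesis is applicable — which is precisely what the choice of $\delta$ secures; the remaining computations are routine.
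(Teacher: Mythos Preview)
The paper does not supply its own proof of this lemma; it is quoted as a known result with references to \cite{Ad} (Chapter~7) and \cite[Lemma~2.5]{KrMi}. Your argument is correct and gives a clean, self-contained proof: the split of the Gagliardo seminorm into near- and far-diagonal parts, the telescoping reduction of an arbitrary increment $h$ to coordinate increments $h_\gamma e_\gamma$, and the radial integral $\int_0^\delta r^{q(\beta-k)-1}\,\dd r$ that converges precisely when $k<\beta$ are all standard and well executed, and your care about keeping the intermediate translates inside $\tilde\Omega$ via the choice of $\delta$ is exactly what is needed for the hypothesis to apply.
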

A very useful tool is Poincar\'e's inequality (see, for instance, \cite[Lemma 5.6]{GueErn15}).
\begin{lemma}\label{lem:poincare}
Let $\alpha\in(0,1]$ and $p\in[1,\infty)$. Then we have
\begin{align}\label{in:crucial0}
\int_\Omega |u-(u)_{\Omega}|^p\dx\leq\,c\,\mathrm{diam}(\Omega)^{\alpha p}\left[u\right]_{W^{\alpha,p}(\Omega)}^p
\end{align}
for all $u\in W^{\alpha,p}(\Omega)$, where $c$ does not depend on $\mathrm{diam}(\Omega)$.
Here, we denote $(u)_{\Omega}=\dashint_\Omega u\dx$.
\end{lemma}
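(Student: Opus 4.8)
The plan is to reduce everything to Jensen's inequality together with the elementary fact that $|x-y|$ is bounded by $\mathrm{diam}(\Omega)$ on $\Omega\times\Omega$. First I would write, for $x\in\Omega$,
\begin{align*}
u(x)-(u)_{\Omega}=\dashint_\Omega\big(u(x)-u(y)\big)\dy ,
\end{align*}
and apply Jensen's inequality to the convex function $t\mapsto|t|^p$ (here $p\geq1$ is used) with respect to the normalized measure $|\Omega|^{-1}\dy$ on $\Omega$. This gives
\begin{align*}
|u(x)-(u)_{\Omega}|^p\leq\dashint_\Omega|u(x)-u(y)|^p\dy .
\end{align*}
Integrating in $x$ over $\Omega$ and using Fubini's theorem yields
\begin{align*}
\int_\Omega|u-(u)_{\Omega}|^p\dx\leq\frac{1}{|\Omega|}\int_\Omega\int_\Omega|u(x)-u(y)|^p\dy\dx .
\end{align*}

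Next I would insert the fractional weight. For $x,y\in\Omega$ with $x\neq y$ we have $|x-y|\leq\mathrm{diam}(\Omega)$, hence
\begin{align*}
|u(x)-u(y)|^p=\frac{|u(x)-u(y)|^p}{|x-y|^{n+\alpha p}}\,|x-y|^{n+\alpha p}\leq\mathrm{diam}(\Omega)^{n+\alpha p}\,\frac{|u(x)-u(y)|^p}{|x-y|^{n+\alpha p}} .
\end{align*}
Plugging this into the previous estimate and recalling the definition of the seminorm $[\,\cdot\,]_{W^{\alpha,p}(\Omega)}$, I arrive at
\begin{align*}
\int_\Omega|u-(u)_{\Omega}|^p\dx\leq\frac{\mathrm{diam}(\Omega)^{n}}{|\Omega|}\,\mathrm{diam}(\Omega)^{\alpha p}\,\big[u\big]_{W^{\alpha,p}(\Omega)}^p .
\end{align*}

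It then remains to absorb the geometric prefactor $\mathrm{diam}(\Omega)^{n}/|\Omega|$ into the constant $c$. This ratio is invariant under dilations of $\Omega$, so it introduces no dependence on $\mathrm{diam}(\Omega)$; for the classes of domains in which the lemma is applied (balls, cubes, shape-regular simplices and their affine images) it is moreover bounded by a purely dimensional constant, which gives the claimed inequality. I do not expect any serious obstacle here: all steps are elementary, and no density argument is required since the right-hand side is literally the fractional seminorm and every quantity is finite for $u\in W^{\alpha,p}(\Omega)$. The only point deserving a little care is to make the independence of $c$ from $\mathrm{diam}(\Omega)$ precise, which is exactly the scale invariance of the ratio $\mathrm{diam}(\Omega)^{n}/|\Omega|$ noted above.
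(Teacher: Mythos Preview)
Your argument is correct: the Jensen-plus-weight computation is exactly the standard proof of the fractional Poincar\'e inequality, and your observation that the prefactor $\mathrm{diam}(\Omega)^n/|\Omega|$ is dilation-invariant is precisely what is needed to justify that $c$ does not depend on $\mathrm{diam}(\Omega)$. The paper does not give its own proof of this lemma but simply cites it from \cite{GueErn15}, so there is nothing further to compare; your self-contained derivation is entirely adequate and matches the elementary argument one finds in that reference.
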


Similarly, we can define fractional derivatives in time for functions $u:(0,T)\rightarrow X$, where $(X,\|\cdot\|_X)$ is a separable Banach space.
We define for $p\in(1,\infty)$ and $\alpha\in(0,1)$
the norm
\begin{align*}
\|u\|_{W^{\alpha,p}(0,T;X)}:=\left( \|u\|^p_{L^{p}(0,T;X)}+\int_0^T\int_0^T\frac{\|u(\sigma_1)-u(\sigma_2))\|_{X}^p}{|\sigma_1-\sigma_2|^{1+\alpha p}}\,\dd\sigma_1\,\dd\sigma_2 \right)^\frac{1}{p}.
\end{align*}
The space $W^{\alpha,p}(0,T;X)$ is now defined as the subspace of the Bochner space $L^{p}(0,T;X)$
consisting of the functions having finite $W^{\alpha,p}(0,T;X)$-norm.
Analogously to the fractional derivative in space we write
\begin{align*}
\left[u\right]^p_{W^{\alpha,p}(0,T;X)}:=\int_0^T\int_0^T\frac{\|u(\sigma_1)-u(\sigma_2))\|_{X}^p}{|\sigma_1-\sigma_2|^{1+\alpha p}}\,\dd\sigma_1\,\dd\sigma_2 
\end{align*}
in the time dependent case.
%Previous Lemma can be reformulated as follows
%\begin{lemma}\label{lep} If $F:\mathbb{R}^{n}\to\mathbb{R}^{N}$, $F\in
%L^{p}(B_{R})$ with $1<p<+\infty$ and for some $\rho\in(0,R)$, $\beta\in(0,1]$, $M>0$,
%$$
%\sum_{s=1}^{n}\int_{B_{\rho}}|\tau_{s,h}F(x)|^{p}\ dx\leq
%M^{p} |h|^{p\beta}
%$$
%for every $h$ with $|h|<\frac{R-\rho}{2}$, then $F\in
%W^{k,p}(B_{\rho}; \mathbb{R}^{N})\cap L^{\frac{np}{n-kp}}(B_{\rho};
%\mathbb{R}^{N})$ for every $k\in(0,\beta)$ and
%$$
%\|F\|_{L^{\frac{np}{n-kp}}(B_{\rho})}\leq
%c\left(M+\|F\|_{L^{p}(B_{R})}\right),
%$$
%with $c\equiv c(n, N, R,\rho,\beta,k)$.
%\end{lemma}
The following fractional variant of Sobolev's Theorem holds.
\begin{lemma}\label{thm:sobboch}
Let $X$ be a separable Banach space, $1< p<\infty$ and  $\alpha\in\big(\frac{1}{p},1\big)$.
The embedding $$W^{\alpha,p}(0,T;X)\hookrightarrow C^{\alpha-\frac{1}{p}}([0,T];X)$$ is continuous.
\end{lemma}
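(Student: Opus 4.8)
The embedding follows once we produce, for each $u\in W^{\alpha,p}(0,T;X)$, a (necessarily unique) continuous representative $\tilde u\in C([0,T];X)$ with
\[
\|\tilde u(s)-\tilde u(t)\|_X\leq c\,[u]_{W^{\alpha,p}(0,T;X)}\,|s-t|^{\alpha-\frac1p},\qquad \|\tilde u\|_{C([0,T];X)}\leq c\,\|u\|_{W^{\alpha,p}(0,T;X)},
\]
with $c=c(p,\alpha,T)$; note that $\alpha-\frac1p\in(0,1)$ under the hypotheses. The plan is to run the classical Campanato/Morrey argument in the Bochner setting. Since $u\in L^p(0,T;X)\subset L^1(0,T;X)$, the averages $(u)_I:=\dashint_I u\,\mathrm{d}\tau$ over subintervals $I\subset(0,T)$ are well-defined Bochner integrals.

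The first step is a mean-oscillation bound: for every interval $I\subset(0,T)$,
\[
\dashint_I\bigl\|u(\tau)-(u)_I\bigr\|_X\,\mathrm{d}\tau\;\leq\;[u]_{W^{\alpha,p}(0,T;X)}\,|I|^{\alpha-\frac1p}.
\]
I would obtain this by writing $u(\tau)-(u)_I=\dashint_I\bigl(u(\tau)-u(\sigma)\bigr)\,\mathrm{d}\sigma$, splitting $\|u(\tau)-u(\sigma)\|_X=\frac{\|u(\tau)-u(\sigma)\|_X}{|\tau-\sigma|^{(1+\alpha p)/p}}\,|\tau-\sigma|^{(1+\alpha p)/p}$, and applying Hölder's inequality with exponents $p$ and $p'$ on the product measure space $I\times I$. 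The first factor is bounded by $[u]_{W^{\alpha,p}(0,T;X)}$ after extending the domain of integration; in the second factor $|\tau-\sigma|\leq|I|$ and the exponent $(1+\alpha p)p'/p=(1+\alpha p)/(p-1)$ is nonnegative, so that integral is controlled by $|I|^{2+(1+\alpha p)/(p-1)}$, and a short bookkeeping of exponents gives the total power $-2+\tfrac{p-1}{p}\bigl(2+\tfrac{1+\alpha p}{p-1}\bigr)=\alpha-\frac1p$. This computation is the heart of the proof.

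Next I would telescope: for $\tau\in(0,T)$ and an interval $I\ni\tau$, bisect repeatedly to get $I=I_0\supset I_1\supset\cdots$ with $\tau\in I_k$ and $|I_{k+1}|=\tfrac12|I_k|$; the oscillation bound yields $\|(u)_{I_{k+1}}-(u)_{I_k}\|_X\leq 2[u]_{W^{\alpha,p}(0,T;X)}|I_k|^{\alpha-\frac1p}$, a geometric series in $k$. Hence $((u)_{I_k})_k$ is Cauchy and, by the Lebesgue differentiation theorem for Bochner integrals (valid since $X$ separable forces $u$ to be essentially separably valued), its limit equals $u(\tau)$ at almost every $\tau$; summing the series gives $\|(u)_{I}-u(\tau)\|_X\leq c[u]_{W^{\alpha,p}(0,T;X)}|I|^{\alpha-\frac1p}$ at all Lebesgue points $\tau\in I$. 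Given $s\neq t$ in $(0,T)$, one can choose an interval $I\subset(0,T)$ with $s,t\in\overline I$ and $|s-t|\leq|I|\leq 3|s-t|$ (shrinking $I$ toward an endpoint of $(0,T)$ when $s$ or $t$ lies near $\partial(0,T)$, which still keeps $|I|\geq|s-t|$ since $|s-t|<T$); applying the previous bound at $s$ and at $t$ gives the Hölder estimate on the dense set of Lebesgue points, which then extends uniquely to the continuous representative $\tilde u$. Taking $I=(0,T)$ and using $\|(u)_{(0,T)}\|_X\leq T^{-1/p}\|u\|_{L^p(0,T;X)}$ (Hölder's inequality) controls $\|\tilde u\|_{C([0,T];X)}$ by $\|u\|_{W^{\alpha,p}(0,T;X)}$.

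The only genuinely non-classical points are the use of the vector-valued Lebesgue differentiation theorem and the need to keep all constants uniform while selecting the comparison intervals near the boundary of $(0,T)$; both are routine once the exponent computation of the first step is in place. Alternatively one could test with functionals $\varphi\in X^*$, invoke the scalar fractional Sobolev embedding for $\varphi(u)$, and recombine, but the direct argument avoids tracking $\varphi$-dependent null sets.
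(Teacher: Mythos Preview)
The paper does not give a proof of this lemma; it is stated as a known fractional Sobolev embedding and then used (notably in the proof of Corollary~\ref{cor:crucial}). So there is no ``paper's own proof'' to compare against.

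Your argument is correct. The exponent bookkeeping in the mean-oscillation step checks out: with $|I|^{-2}$ from the two averages, the H\"older factor contributes $|I|^{\frac{p-1}{p}\left(2+\frac{1+\alpha p}{p-1}\right)}=|I|^{2-\frac1p+\alpha}$, giving the claimed power $\alpha-\frac1p$. The telescoping step uses $\alpha>\frac1p$ to sum the geometric series, and the vector-valued Lebesgue differentiation theorem is indeed available since $X$ is separable (so $u$ is essentially separably valued and strongly measurable). The treatment of comparison intervals near the endpoints of $(0,T)$ and the passage from Lebesgue points to the continuous representative are handled correctly. This is precisely the Campanato--Morrey route one would take in the scalar case, transported verbatim to the Bochner setting; it supplies exactly what the paper takes for granted.
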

Based on Lemma \ref{thm:sobboch} we can proof the following corollary which will turn out to be crucial for our analysis.
\begin{corollary}\label{cor:crucial}
Under the assumptions of Lemma  \ref{thm:sobboch} we have
\begin{align}\label{in:crucial}
\int_0^T\|u(t)-u(T)\|_X^p\dt\leq\,cT^{\alpha p}\left[u\right]_{W^{\alpha,p}(0,T;X)}^p
\end{align}
for all $u\in W^{\alpha,p}(0,T;X)$, where $c$ does not depend on $T$.
\end{corollary}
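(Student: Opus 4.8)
\emph{Proof strategy.} Since $\alpha p>1$, Lemma~\ref{thm:sobboch} guarantees that $u$ has a representative in $C^{\alpha-\frac1p}([0,T];X)$, with which we work throughout, so that $u(t)$ and $u(T)$ are meaningful. The plan is to rescale time so as to normalise $T$ to $1$ --- this is precisely what renders the constant independent of $T$ --- and then to read off \eqref{in:crucial} from Lemma~\ref{thm:sobboch} once the temporal mean has been subtracted.

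\textbf{Step 1: rescaling.} Given $u\in W^{\alpha,p}(0,T;X)$, set $v(s):=u(Ts)$ for $s\in(0,1)$. A change of variables gives $\int_0^T\|u(t)-u(T)\|_X^p\dt=T\int_0^1\|v(s)-v(1)\|_X^p\,\dd s$ and $[u]_{W^{\alpha,p}(0,T;X)}^p=T^{1-\alpha p}\,[v]_{W^{\alpha,p}(0,1;X)}^p$. Dividing by $T$, the inequality \eqref{in:crucial} is thus equivalent --- with exactly the same constant --- to its special case $T=1$; once that case is settled the resulting constant manifestly does not involve $T$. So it remains to prove $\int_0^1\|v(s)-v(1)\|_X^p\,\dd s\le c\,[v]_{W^{\alpha,p}(0,1;X)}^p$ for every $v\in W^{\alpha,p}(0,1;X)$.

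\textbf{Step 2: subtract the mean and apply the embedding.} Put $\bar v:=v-(v)_{(0,1)}$, so that $v(s)-v(1)=\bar v(s)-\bar v(1)$ and $[\bar v]_{W^{\alpha,p}(0,1;X)}=[v]_{W^{\alpha,p}(0,1;X)}$. The Banach-space-valued one-dimensional analogue of Lemma~\ref{lem:poincare} (Jensen's inequality applied to the Bochner average defining $(v)_{(0,1)}$, followed by the crude bound $|s_1-s_2|\le1$) yields $\|\bar v\|_{L^p(0,1;X)}\le c\,[v]_{W^{\alpha,p}(0,1;X)}$, hence $\|\bar v\|_{W^{\alpha,p}(0,1;X)}\le c\,[v]_{W^{\alpha,p}(0,1;X)}$. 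Since $\alpha>\frac1p$, Lemma~\ref{thm:sobboch} on $(0,1)$ controls the H\"older seminorm of $\bar v$, so that $\|\bar v(s)-\bar v(1)\|_X\le c\,\|\bar v\|_{W^{\alpha,p}(0,1;X)}\,(1-s)^{\alpha-\frac1p}$ for $s\in[0,1)$. Raising to the $p$-th power and integrating, with $\int_0^1(1-s)^{\alpha p-1}\,\dd s=(\alpha p)^{-1}<\infty$ --- which is exactly where $\alpha p>1$ is needed --- gives the displayed estimate of Step~1, and combining with Step~1 finishes the proof.

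The only genuinely delicate point is the $T$-independence of the constant: applying Lemma~\ref{thm:sobboch} directly on $(0,T)$ would leave the $T$-dependence of the embedding constant unaccounted for, so the rescaling in Step~1 is essential rather than cosmetic. A secondary technical wrinkle is that Lemma~\ref{thm:sobboch} bounds the full norm $\|\cdot\|_{W^{\alpha,p}}$ and not just the seminorm; this is precisely why one first passes to $\bar v$, and the fractional Poincar\'e inequality makes that step cost nothing.
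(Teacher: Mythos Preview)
Your proof is correct and takes a genuinely different route from the paper's. Both arguments begin with the same rescaling to $T=1$, but then diverge: the paper argues by contradiction, assuming a sequence $(u_k)$ with $u_k(1)=0$, unit $L^p$-norm and vanishing seminorm, and extracts a uniformly convergent limit via the compact embedding $W^{\alpha,p}(0,1;X)\hookrightarrow C([0,1];X)$ (invoking Arzel\`a--Ascoli) to reach a contradiction. Your argument is instead direct: subtract the temporal mean, control the resulting $L^p$-norm by the seminorm via a one-dimensional fractional Poincar\'e inequality, then apply the H\"older embedding of Lemma~\ref{thm:sobboch} pointwise and integrate. Your approach has two advantages: it is constructive (the constant is explicit in terms of the embedding constant and $(\alpha p)^{-1}$), and it sidesteps the compactness step, which for a general separable Banach space $X$ is not entirely innocent, since Arzel\`a--Ascoli also requires pointwise precompactness in $X$. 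One small quibble: the integral $\int_0^1(1-s)^{\alpha p-1}\,\dd s$ is finite for all $\alpha p>0$, so the genuine role of the hypothesis $\alpha p>1$ in your argument lies in making Lemma~\ref{thm:sobboch} applicable, not in ensuring convergence of that integral.
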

\begin{remark}
We can replace $u(T)$ on the left-hand side of \ref{in:crucial} by $u(t_0)$ for any $t_0\in[0,T]$.
\end{remark}
\begin{remark}
Inequality \ref{in:crucial} holds if and only if $\alpha> \frac{1}{p}$.
This can be seen from the optimality of the embedding in Lemma  \ref{thm:sobboch}. If $\alpha\leq \frac{1}{p}$, functions from $W^{\alpha,p}(0,T;X)$ are not necessarily continuous in time. Consequently, the point evaluation $u(T)$ is not well-defined.
\end{remark}
\begin{proof}
Without loss of generality we assume that $T=1$. The general case follows by scaling. So, we are going to prove that
\begin{align}\label{in:crucial'}
\int_0^1\|u(t)\|_X^p\dt\leq\,C\left[u\right]_{W^{\alpha,p}(0,1;X)}^p
\end{align}
for all $u\in W^{\alpha,p}(0,1;X)$ with $u(1)=0$. We argue by contradiction.
Assume that \eqref{in:crucial'} is false. Then there is a sequence $(u_k)\in W^{\alpha,p}(0,1;X)$ with $\int_0^1\|u_k(t)\|_X^p\dt=1$, $u_k(1)=0$ and 
\begin{align}\label{in:crucial''}
\left[u_k\right]_{W^{\alpha,p}(0,1;X)}^p\leq\frac{1}{k}
\end{align}
for all $k\in\mathbb N$. Consequently, $(u_k)$ is bounded in $W^{\alpha,p}(0,1;X)$. So, there is a subsequence (not relabeled) such that $u_k\rightharpoonup u$ in $W^{\alpha,p}(0,1;X)$. By Lemma \ref{thm:sobboch} and the Arzel\`a-Ascoli Theorem, the embedding $W^{\alpha,p}(0,1;X)\hookrightarrow C( [0,1];X)$ is compact such that $u_k\rightarrow u$ uniformly. As a consequence, we have
$u(1)=0$ and $\int_0^1\|u(t)\|_X^p\dt=1$. On the other hand, \eqref{in:crucial''} yields
$\left[u\right]_{W^{\alpha,p}(0,1;X)}=0$ so that $u$ is constant in time. This is obviously a contradiction. 
\end{proof}

The following lemma combines fractional derivatives in space and time. It is a special case of a general interpolation result (see \cite[Thm. 3.1]{Am}).
\begin{lemma}\label{lem:inter}
Let $\alpha_t,\alpha_x\in(0,1]$ and $\theta\in(0,1)$.
The embedding
\begin{align*}
W^{\alpha_t,2}(0,T;L^2(\Omega))\cap L^2(0,T;W^{\alpha_x,2}(\Omega))\hookrightarrow W^{\theta\alpha_t,2}(0,T;W^{(1-\theta)\alpha_x,2}(\Omega))
\end{align*}
is continuous.
\end{lemma}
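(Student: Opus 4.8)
The plan is to reduce the problem to the whole time line $\R$, pass to the Fourier transform in time, and then interpolate pointwise in the time--frequency variable using real interpolation of the spatial fractional Sobolev spaces. Write $Y_0:=L^2(\Omega)$ and $Y_1:=W^{\alpha_x,2}(\Omega)$. First I would fix a bounded linear extension operator $E$ from functions on $(0,T)$ to functions on $\R$ which, being built by reflection across the endpoints (and therefore acting independently of the target Banach space), maps $W^{\alpha_t,2}(0,T;Y)\to W^{\alpha_t,2}(\R;Y)$ \emph{and} $L^2(0,T;Y)\to L^2(\R;Y)$ boundedly, simultaneously for $Y=Y_0$ and $Y=Y_1$. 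Since the restriction operator back to $(0,T)$ is trivially bounded on all the spaces involved, it suffices to prove the embedding with $(0,T)$ replaced by $\R$.

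Next I would invoke the Hilbert-space-valued Plancherel theorem. For a separable Hilbert space $Y$ and $\beta\in(0,1)$ one has the norm equivalence
\begin{align*}
\|u\|_{W^{\beta,2}(\R;Y)}^2 \;\simeq\; \int_\R \big(1+|\tau|^{2\beta}\big)\,\|\widehat u(\tau)\|_Y^2\,\dd\tau,
\end{align*}
with the analogous identity (with the weight $1+|\tau|^2$) when $\beta=1$. Hence $u\in W^{\alpha_t,2}(\R;Y_0)\cap L^2(\R;Y_1)$ precisely when $\int_\R (1+|\tau|^{2\alpha_t})\|\widehat u(\tau)\|_{Y_0}^2\,\dd\tau<\infty$ and $\int_\R \|\widehat u(\tau)\|_{Y_1}^2\,\dd\tau<\infty$. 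On the spatial side, since $\Omega$ is a bounded Lipschitz domain (so the $W^{s,2}(\Omega)$-scale is a retract of the $W^{s,2}(\R^n)$-scale via a Stein extension) and $(1-\theta)\alpha_x\in(0,1)$, the real interpolation identity
\begin{align*}
W^{(1-\theta)\alpha_x,2}(\Omega)=\big(L^2(\Omega),W^{\alpha_x,2}(\Omega)\big)_{1-\theta,2}
\end{align*}
holds, and in particular $\|v\|_{W^{(1-\theta)\alpha_x,2}(\Omega)}\le c\,\|v\|_{Y_0}^{\theta}\,\|v\|_{Y_1}^{1-\theta}$ for all $v\in Y_0\cap Y_1$.

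With these ingredients the estimate is a short computation. Applying the interpolation inequality to $v=\widehat u(\tau)$, using $1+|\tau|^{2\theta\alpha_t}\le 2\,(1+|\tau|^{2\alpha_t})^{\theta}$, and then Hölder's inequality in $\tau$ with exponents $1/\theta$ and $1/(1-\theta)$, one gets
\begin{align*}
\int_\R \big(1+|\tau|^{2\theta\alpha_t}\big)\,\|\widehat u(\tau)\|_{W^{(1-\theta)\alpha_x,2}(\Omega)}^2\,\dd\tau
&\le c\int_\R \Big(\big(1+|\tau|^{2\alpha_t}\big)\,\|\widehat u(\tau)\|_{Y_0}^2\Big)^{\theta}\Big(\|\widehat u(\tau)\|_{Y_1}^2\Big)^{1-\theta}\,\dd\tau\\
&\le c\,\Big(\int_\R \big(1+|\tau|^{2\alpha_t}\big)\,\|\widehat u(\tau)\|_{Y_0}^2\,\dd\tau\Big)^{\theta}\Big(\int_\R \|\widehat u(\tau)\|_{Y_1}^2\,\dd\tau\Big)^{1-\theta},
\end{align*}
which via the Fourier characterizations above reads $\|u\|_{W^{\theta\alpha_t,2}(\R;W^{(1-\theta)\alpha_x,2}(\Omega))}\le c\,\|u\|_{W^{\alpha_t,2}(\R;Y_0)}^{\theta}\,\|u\|_{L^2(\R;Y_1)}^{1-\theta}$. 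Combining this with the boundedness of $E$ and of restriction, and using Young's inequality to pass from the multiplicative bound to an additive one, yields the claimed continuous embedding.

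The part needing the most care is producing an extension operator $E$ that is bounded on \emph{both} scales at once; this is where it is essential that a reflection-type extension does not see the target space, so a single operator works for $Y_0$ and $Y_1$ simultaneously. The two remaining inputs — the Hilbert-valued Plancherel/Sobolev equivalence and the interpolation identity $(L^2(\Omega),W^{\alpha_x,2}(\Omega))_{1-\theta,2}=W^{(1-\theta)\alpha_x,2}(\Omega)$ on bounded Lipschitz domains — are standard but should be cited precisely. Alternatively, as indicated in the statement, one may simply quote the general anisotropic interpolation theorem of Amann \cite[Thm.\ 3.1]{Am}, of which the asserted embedding is the special case obtained by interpolating $L^2(0,T;W^{\alpha_x,2}(\Omega))$ with $W^{\alpha_t,2}(0,T;L^2(\Omega))$ at parameter $\theta$.
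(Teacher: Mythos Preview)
Your argument is correct. The paper itself does not supply a proof of this lemma at all; it simply records that the embedding is a special case of the general interpolation result \cite[Thm.~3.1]{Am} and moves on. Your proposal instead gives a self-contained Hilbert-space proof: extend in time by reflection, use the vector-valued Plancherel characterisation of $W^{\beta,2}(\R;Y)$, invoke the real-interpolation identity $(L^2(\Omega),W^{\alpha_x,2}(\Omega))_{1-\theta,2}=W^{(1-\theta)\alpha_x,2}(\Omega)$ on Lipschitz domains, and finish with H\"older in the time-frequency variable. This is a genuinely more elementary route than citing Amann's abstract theorem, and it has the advantage of being transparent in the $L^2$-based setting at hand; the trade-off is that Amann's result covers far more general scales (non-Hilbert, anisotropic, different integrability in time and space) in one stroke. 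Since you also note the Amann citation as an alternative, your write-up in fact subsumes the paper's treatment.
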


\subsection{Weak solutions}
In order to treat parabolic problems we define the function spaces
\begin{align*}
\mathcal V^{p(\cdot)}(Q):=\bigg\{u\in L^1(0,T;W^{1,1}(\Omega)):\,\,\int_Q |\nabla u|^{p(\cdot)}\dxt<\infty\bigg\},\\
\mathcal V^{p(\cdot)}_0(Q):=\bigg\{u\in L^1(0,T;W^{1,1}_0(\Omega)):\,\,\int_Q |\nabla u|^{p(\cdot)}\dxt<\infty\bigg\},
\end{align*}
see \cite{DNR}, in which weak solutions to \eqref{eq:heat} are located. 
\begin{definition}\label{def:weak}
Assume that $\bff=\Div\bfG $ with $\bfG \in \big(L^1(Q)\big)^{N\times n}$.
%$\bff\in \big(L^1(Q)\big)^N$.
We call a function $\bfu\in C([0,T];(L^2(\Omega))^N)\cap (\mathcal V_0^{p(\cdot)}(Q))^N$ a weak solution to \eqref{eq:heat} iff
\begin{equation}
\begin{aligned}\label{eq:heatweak1}
\int_\Omega\bfu(t,x)\cdot\bfvarphi(x)\dx&=\int_\Omega \bfu_0(x)\cdot\bfvarphi(x)\dx-\int_0^t\int_\Omega \bfS(\sigma,x,\nabla\bfu(\sigma,x)):\nabla\bfvarphi(x)\dxs
\\
&-\int_0^t\int_\Omega \bfG (\sigma,x) : \nabla\bfvarphi(x)\dxs
\end{aligned}
\end{equation}
for all $\bfvarphi\in (C_0^\infty(\Omega))^N$ and all $t\in[0,T]$. 
\end{definition}
\begin{remark}
\begin{enumerate}
\item The formulation in \eqref{eq:heatweak1} is equivalent to 
\begin{equation}
\begin{aligned}\label{eq:heatweak1'}
\int_\Omega\bfu(t,x)\cdot\bfvarphi(x)\dx&=\int_\Omega \bfu(s,x)\cdot\bfvarphi(x)\dx-\int_s^t\int_\Omega \bfS(\sigma,x,\nabla\bfu(\sigma,x)):\nabla\bfvarphi(x)\dxs
\\
&-\int_s^t\int_\Omega \bfG(\sigma,x): \nabla\bfvarphi(x)\dxs
\end{aligned}
\end{equation}
for all $\bfvarphi\in (W_0^{1,\overline p_{(s,t)}(\cdot)}(\Omega))^N$ (with $\overline p_{(s,t)}(x):=\sup_{s\leq\sigma\leq t}p(\sigma,x)$) and all $s,t\in[0,T]$ with $s<t$,
and to
\begin{equation}
\begin{aligned}\label{eq:heatweak1''}
\int_Q\bfu(t,x)\cdot\partial_t\bfvarphi(t,x)\dxt&= \int_Q \bfS(t,x,\nabla\bfu(t,x)):\nabla\bfvarphi(t,x)\dxt
\\
&-\int_\Omega \bfu_0(x)\cdot\bfvarphi(0,x)\dx+\int_Q \bfG(t,x):\nabla\bfvarphi(t,x)\dxt
\end{aligned}
\end{equation}
for all $\bfvarphi\in (C_0^\infty([0,T)\times\Omega))^N$.
\item If $\bff\in \big(L^1(Q)\big)^N$, we can define $\bfG =\nabla\Delta^{-1}\big(\chi_{\Omega}\bff\big)$, where $\Delta^{-1}$ is the solution operator of the Laplace on $\mathbb R^n$,  and obtain
%A solution to \eqref{eq:heat} can already be defined
%if $\bff=\Div\bfF$ with $\bfF\in \big(L^1(Q)\big)^{N\times n}$ interpreting the forcing term as
$$\int_0^t\int_\Omega \bff(\sigma,x)\cdot\bfvarphi(x)\dxs=-\int_0^t\int_\Omega \bfG (\sigma,x) : \nabla\bfvarphi(x)\dxs$$
for all $\bfvarphi\in (C_0^\infty(\Omega))^N$ and all $t\in[0,T]$.
\item A weak solution to \eqref{eq:heat} in the sense of Definition \ref{def:weak}
exists if $p\in\PPln(Q)$ with $p^->1$ and $p^+<\infty$ and $\bfG \in \big(L^{p'(\cdot)}(Q)\big)^{N\times n}$. Although the framework of Bochner spaces is not available in this setting, this can be shown by standard methods from monotone operator theory, see \cite{DNR}.
\end{enumerate}
\end{remark}

\subsection{Finite elements}
\label{subsec:FEM}

Let $\Omega \subset \setR^n$ be a connected, open
domain with polyhedral boundary. We assume that $\partial \Omega$ is
Lipschitz continuous. For an open, bounded (non-empty) set $U \subset
\setR^n$ we denote
by $h_U$ the diameter of $U$, and by $\rho_U$ the supremum of the
diameters of inscribed balls.  
We denote by $\mathscr{T}_h$, the simplicial subdivision of $\Omega$ where
\begin{align*}
  \mathrm{h} &= \max_{\mathcal S \in \mathscr{T}_h} h_\mathcal S.
\end{align*}
We assume that $\mathscr{T}_h$ is
non-degenerate, that is
\begin{align}
  \label{eq:nondeg}
  \max_{\mathcal S \in \mathscr{T}_h} \frac{h_\mathcal S}{\rho_\mathcal S} \leq \gamma_0.
\end{align}
For $\mathcal S \in \mathscr{T}_h$ we define the set of neighbors $N_\mathcal S$ and
the neighborhood $\mathcal M_\mathcal S$ by
\begin{align*}
  N_\mathcal S &:= \set{\mathcal S' \in \mathscr{T}_h \,:\, \overline{\mathcal S'} \cap
    \overline{\mathcal S} \not= \emptyset},
  \quad
  \mathcal M_\mathcal S := \text{interior} \bigcup_{\mathcal S' \in N_\mathcal S} \overline{\mathcal S'},
\end{align*}
respectively. Note that for all $\mathcal S,\mathcal S' \in \mathscr{T}_h$: $\mathcal S' \subset
\overline{\mathcal M_{\mathcal S}} \Leftrightarrow \mathcal S \subset \overline{\mathcal M_{\mathcal S'}}
\Leftrightarrow \overline{\mathcal S} \cap \overline{\mathcal S'} \not=\emptyset$. Due
to our assumption on $\Omega$, the $\mathcal M_\mathcal S$ are connected, open bounded sets. 

It is easy to see that the non-degeneracy~\eqref{eq:nondeg} of
$\mathscr{T}_h$ implies the following properties, where the constants
are independent of $\mathrm{h}$:
\begin{enumerate}%[({Mesh}$_1$):]
\item \label{mesh:SK} $\abs{\mathcal M_\mathcal S} \sim \abs{\mathcal S}$ for all $\mathcal S \in
  \mathscr{T}_h$.
\item \label{mesh:NK} There exists $m_1 \in \setN$ such that $\# N_\mathcal S
  \leq m_1$ for all $\mathcal S \in
  \mathscr{T}_h$.
\end{enumerate}
  
For $\Omega\subset \setR ^n$ and $\ell\in \setN _0$ we denote by
$\mathscr{P}_\ell(\Omega)$ the polynomials on $\Omega$ of degree less than or equal
to $\ell$. Moreover, we set $\mathscr{P}_{-1}(\Omega):=\set {0}$. Let us
characterize the finite element space $V_h$ as
\begin{align}\label{def:Vh}
  V_h &:= \set{v \in W^{1,1}(\Omega)\,:\, v|_{\mathcal S}
    \in \mathscr{P}_1(\mathcal S)\,\,\forall \mathcal S\in \mathscr T_h}.
\end{align}
%
%We will now recall the definition of the fractional Sobolev space. Although this is standard, we state it here because we will make certain convenient notation that will run throughout the rest of this paper.
%\begin{definition}
%Let $ \lfloor s \rfloor$ denote the largest integer less or equal to $s$.
%Now let $\Omega\subset\mathbb{R}^n$, $s>0$, $s\notin \mathbb{N}$ and set $\alpha=s- \lfloor s \rfloor \in(0,1)$. For all $p\in[1,\infty)$ and $\bfu\in L^p(\Omega)$, we define
%\begin{align*}
%W^{s,p}(\Omega):=\left\{ \bfu\in W^{ \lfloor s \rfloor, p}(\Omega) \, : \, \sup_{\vert\beta\vert =  \lfloor s \rfloor}\left[ D^\beta \bfu\right]_{W^{\alpha,p}(\Omega)}<\infty \right\}
%\end{align*}
%and endow it with the norm $
%\left\Vert  \bfu \right\Vert_{W^{\alpha,p}(\Omega)} :=  \Vert \bfu\Vert_{W^{ \lfloor s \rfloor,p}(\Omega)}  + 
% \sup_{\vert\beta\vert =  \lfloor s \rfloor}\left[ D^\beta \bfu\right]_{W^{\alpha,p}(\Omega)}$
%where
%$
%\left[  \bfu \right]^p_{W^{\alpha,p}(\Omega)} :=
%\int_\Omega \int_\Omega\frac{\vert  \bfu(x) -\bfu(y)\vert^p}{\vert x-y \vert^{n+p \alpha}}\dx\dy$.
%\begin{remark}
%When $\lfloor s\rfloor=0$ we shall identify $ \Vert \bfu\Vert_{W^{ \lfloor s \rfloor,p}(\Omega)}$ with the norm  $\Vert \bfu\Vert_{L^{ p}(\Omega)}$. Also
%when the domain of integration is clear and again $\lfloor s\rfloor=0$, we shall write the corresponding norm as $\left\Vert  \bfu \right\Vert_{\alpha,p} :=  \Vert \bfu\Vert_p  + 
%\left[  \bfu \right]_{\alpha}$.
%\end{remark}
%\end{definition}

We now state an assumption on an interpolation operator between the
continuous and discrete function spaces (satisfied e.g. by the Scott-Zhang
operator~\cite{ScoZha90}).  More
precisely, we assume the following.\footnote{We denote by $\dashint_A f\dx=|A|^{-1}\int_Af\dx$ the mean value of a integrable function $f$ over the set $A$.}

\begin{assump}
  \label{ass:intop}
There is $\Pi_h \,:\, (L^{1}(\Omega))^N \to
  (V_h)^N$ linear such that the following holds.
  \begin{enumerate}
  \item There holds uniformly in $\mathcal S \in
    \mathscr{T}_h$ and $\bfv \in (L^{1}(\Omega))^N$
  \begin{align}
    \label{eq:stab}
 \dashint_{\mathcal S} \abs{\Pi_h \bfv}\dx &\leq
    \,c 	 \dashint_{\mathcal M_\mathcal S} \abs{
      \bfv}\dx.
  \end{align}
\item For all $\bfv \in (\mathscr{P}_{1}(\Omega))^N$ we have
  \begin{align}
    \label{eq:proj}
    \Pi_h \bfv &= \bfv.
  \end{align}
  \end{enumerate}
\end{assump}
It is well-known that Assumption \ref{ass:intop} implies $L^p$-stability of $\Pi_h$, i.e., we have
  \begin{align}
    \label{eq:stabp}
 \dashint_{\mathcal S} \abs{\Pi_h \bfv}^p\dx &\leq
    \,c(p) 	 \dashint_{\mathcal M_\mathcal S} \abs{
      \bfv}^p\dx
\end{align}
for all $\bfv\in \big(L^p(\mathcal M_\mathcal S)\big)^N$. This implies immediately a stability result in fractional Sobolev spaces. In fact, we have for all $\bfv\in \big(W^{\alpha,p}(\mathcal M_\mathcal S)\big)^N$, $p\in[1,\infty)$ and $\alpha\in(0,1]$,
  \begin{align}
    \label{eq:stab''}
\begin{aligned}
\dashint_\mathcal{S} \left\vert  \bfv-\Pi_h\bfv\right\vert^p\dx &\leq\,c\dashint_\mathcal{S} \left\vert  \bfv-(\bfv)_{\mathcal{M}_\mathcal{S}}\right\vert^p\dx+c\dashint_\mathcal{S} \left\vert  \Pi_h\big(\bfv-(\bfv)_{\mathcal{M}_\mathcal{S}}\big)\right\vert^p\dx\\
&\leq\,c\dashint_{\mathcal{M}_\mathcal{S}} \left\vert  \bfv-(\bfv)_{\mathcal{M}_\mathcal{S}}\right\vert^p\dx\leq
    \,c\, h_\mathcal S^{2\alpha-n}  \left[
      \bfv\right]^2_{W^{\alpha,2}(\mathcal{M}_\mathcal{S})} 
\end{aligned}
  \end{align}
using \eqref{eq:proj}, \eqref{eq:stabp} and Lemma \ref{lem:poincare}.
Note that the constant $c$ in \eqref{eq:stab''} does not depend on $h_{\mathcal S}$.

The following crucial estimate follows from \cite[Lemma 4.2]{BrDS2}.
\begin{lemma}
  \label{thm:app_V}
Let $p\in \PPln(Q)$ with $p^->1$ and $p^+<\infty.$
Let $\Pi_h$ satisfy Assumption~\ref{ass:intop}. Let
  $\bfv \in (W^{1,p(\cdot,\cdot)}(\Omega))^N$, then for all $\mathcal{S} \in \mathcal{T}_h$ and all $\bfQ\in\mathbb{R}^{N\times n}$ with
    \begin{align*}
  \vert\bfQ\vert  +\dashint_{\mathcal{M}_\mathcal{S}}\vert\nabla\bfv\vert\dx  \leq c\, \max\{1,\vert \mathcal{S}\vert^{-1}\}
  \end{align*}
 it holds that
  \begin{align}
    \label{eq:app_V1}
    \dashint_{\mathcal S} \bigabs{\bfF (t,\cdot, \nabla \bfv) - \bfF (t,\cdot,\nabla \Pi_h
      \bfv)}^2 \dx &\leq c\, 
    \dashint_{\mathcal M_\mathcal S} \bigabs{\bfF(t,\cdot,\nabla \bfv) - \bfF(t,\cdot,\bfQ)}^2 \dx + ch_\mathcal{S}^2, 
  \end{align}
  uniformly in $t\in I$ with $c$ depending only on $p$ and $\gamma_0$.
% Here we denote $\bfF(x,\bfxi)=(\kappa+|\bfxi|)^{\frac{p(x)-2}{2}}\bfxi$.
%  \begin{align*}
%  \dashint_{\mathcal S} \varphi_a(x,\vert \Pi_h \bfv\vert )\dx
%  +
%  \dashint_{\mathcal S} \varphi_a(x,\vert h_\mathcal{S}\nabla \Pi_h \bfv\vert )\dx
%  \leq
%    \dashint_{\mathcal{M}_\mathcal S} \varphi_a(x,\vert h_\mathcal{S}\nabla \bfv\vert )\dx  +ch_\mathcal{S}
%  \end{align*}
\end{lemma}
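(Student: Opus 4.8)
The plan is to reduce the space-time statement of Lemma~\ref{thm:app_V} to the purely spatial estimate \cite[Lemma 4.2]{BrDS2} by \emph{freezing the time variable} $t\in I$. For fixed $t$, the exponent $x\mapsto p(t,x)$ inherits from $p\in\PPln(Q)$ the property of being in $\PPln(\Omega)$ with the same bounds $p^-,p^+$ and a $\log$-H\"older constant no larger than $c_{\log}(p)$ (restricting a $\log$-H\"older function to a slice does not increase the constant). Likewise $\bfF(t,\cdot,\cdot)$ is exactly the function $\bfF$ from the elliptic setting of \cite{BrDS2} associated with the exponent $p(t,\cdot)$, and the interpolation operator $\Pi_h$ and the triangulation $\mathscr T_h$ are the same objects as there. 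Hence, for each fixed $t$, the hypothesis $\vert\bfQ\vert+\dashint_{\mathcal M_\mathcal S}\vert\nabla\bfv\vert\dx\le c\max\{1,\vert\mathcal S\vert^{-1}\}$ is precisely the hypothesis of \cite[Lemma 4.2]{BrDS2}, and that lemma yields
\begin{align*}
\dashint_{\mathcal S}\bigabs{\bfF(t,\cdot,\nabla\bfv)-\bfF(t,\cdot,\nabla\Pi_h\bfv)}^2\dx\le c\,\dashint_{\mathcal M_\mathcal S}\bigabs{\bfF(t,\cdot,\nabla\bfv)-\bfF(t,\cdot,\bfQ)}^2\dx+c\,h_\mathcal S^2,
\end{align*}
which is exactly \eqref{eq:app_V1}.

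The one point that needs genuine care, and which I expect to be the main (if modest) obstacle, is the \emph{uniformity of the constant $c$ in $t$}. The constant produced by \cite[Lemma 4.2]{BrDS2} depends only on $p^-$, $p^+$, $c_{\log}(p)$ and the non-degeneracy parameter $\gamma_0$. Since $p^-,p^+$ are global and $c_{\log}(p(t,\cdot))\le c_{\log}(p)$ uniformly in $t$ (the local $\log$-H\"older inequality and the decay condition for $1/p(t,\cdot)$ both follow from those for $1/p$ on $Q$ by comparing points with equal time coordinate), the constant is bounded by a quantity depending only on $p$ and $\gamma_0$, independent of $t$. Thus one first checks this slicewise $\log$-H\"older bound, then invokes \cite[Lemma 4.2]{BrDS2} with the time-frozen data, and finally observes that the resulting inequality holds for every $t\in I$ with a common constant.

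Concretely the steps are: (i) fix $t\in I$ and record that $p(t,\cdot)\in\PPln(\Omega)$ with $p(t,\cdot)^-\ge p^-$, $p(t,\cdot)^+\le p^+$ and $c_{\log}(p(t,\cdot))\le c_{\log}(p)$; (ii) note that the map $\bfxi\mapsto\bfF(t,x,\bfxi)=(\kappa+|\bfxi|)^{\frac{p(t,x)-2}{2}}\bfxi$ is, for this fixed $t$, the $\bfF$-function of the elliptic problem \eqref{pxlaplace} with exponent $p(t,\cdot)$, so all structural estimates used in \cite{BrDS2} apply verbatim; (iii) apply \cite[Lemma 4.2]{BrDS2} to $\bfv$, $\mathcal S$, $\bfQ$ under the stated smallness condition, obtaining \eqref{eq:app_V1} for that $t$; (iv) since the constant from (iii) depends only on $p^-,p^+,c_{\log}(p),\gamma_0$, it is independent of $t$, giving the claimed uniformity over $I$. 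No new analysis beyond \cite{BrDS2} is required; the content of the lemma is the observation that the elliptic interpolation estimate is stable under the time-slicing and that its constants do not deteriorate.
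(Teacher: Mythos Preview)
Your proposal is correct and matches the paper's own treatment: the paper does not give a separate proof but simply states that the estimate follows from \cite[Lemma 4.2]{BrDS2}, which is exactly the time-frozen elliptic result you invoke. Your observation that the constants from \cite{BrDS2} depend only on $p^-$, $p^+$, $c_{\log}(p)$, and $\gamma_0$, and are therefore uniform in $t$ under slicing, is precisely the point that justifies the ``uniformly in $t$'' clause.
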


\begin{corollary}\label{cor:app_V}
Let the assumption of Lemma \ref{thm:app_V} be valid. If $\bfF(t,\cdot,\nabla \bfv)
  \in (W^{\alpha,2}(\Omega))^{N\times n}$ for a.e. $t\in I$ then there is $\bfQ=\bfQ(t)\in\mathbb{R}^{N\times n}$ such that
  \begin{align}
    \label{eq:app_V2}
\begin{aligned}
    \dashint_{\mathcal M_\mathcal S} \bigabs{\bfF (t,\cdot,\nabla \bfv) - \bfF (t,\cdot,\bfQ)}^2 \dx &\leq c\, h_\mathcal S^{2\alpha}\,
    \left[\bfF(t,\cdot,\nabla \bfv)\right]^2_{W^{\alpha,2}(\mathcal{M}_\mathcal{S})}\\&+c\, h_\mathcal S^{2\alpha}\bigg(\dashint_{\mathcal M_{\mathcal S}}\left[\ln(\kappa+|\nabla\bfv|)\right]^2(\kappa+|\nabla\bfv|)^{p(t,\cdot)}\dx + 1\bigg)
\end{aligned}
  \end{align}
uniformly in $t\in I$. Moreover, if f $\bfF(t,\cdot,\nabla \bfv)
  \in L^\infty(0,T;(W^{\alpha,2}(\Omega))^{N\times n})$, we have
\begin{align}\label{eq:2808b}
\dashint_{\mathcal M_{\mathcal S}}\ln(\kappa+|\bfQ|)^2(\kappa+|\bfQ|)^{p(t,\cdot)}\dx\leq\,c\bigg(\dashint_{\mathcal M_{\mathcal S}}\left[\ln(\kappa+|\nabla\bfv|)\right]^2(\kappa+|\nabla\bfv|)^{p(t,\cdot)}+1\bigg)
\end{align}
uniformly in $t\in I$.
\end{corollary}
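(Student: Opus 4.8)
The plan is to produce $\bfQ$ by inverting $\bfF$ at a single point of $\mathcal M_\mathcal S$ and then to estimate separately the two sources of oscillation of $\bfF(t,\cdot,\nabla\bfv)$ over $\mathcal M_\mathcal S$: the oscillation of $\nabla\bfv$ itself (controlled by the fractional regularity) and the oscillation of the exponent $p(t,\cdot)$ (controlled by its H\"older continuity). Fix $\mathcal S\in\mathcal T_h$ and a point $x_\mathcal S\in\overline{\mathcal M_\mathcal S}$. For fixed $(t,x)$ the map $\bfxi\mapsto\bfF(t,x,\bfxi)=(\kappa+|\bfxi|)^{\frac{p(t,x)-2}{2}}\bfxi$ is a homeomorphism of $\mathbb R^{N\times n}$: it preserves directions and its radial profile $\tau\mapsto(\kappa+\tau)^{\frac{p(t,x)-2}{2}}\tau$ is strictly increasing and onto $[0,\infty)$ since $p>1$. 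Writing $\bfG:=\bfF(t,\cdot,\nabla\bfv)$, I would put
\begin{align*}
\bfQ=\bfQ(t):=\bfF(t,x_\mathcal S,\cdot)^{-1}\big((\bfG)_{\mathcal M_\mathcal S}\big),\qquad\text{i.e.}\quad\bfF(t,x_\mathcal S,\bfQ)=(\bfG)_{\mathcal M_\mathcal S}=\dashint_{\mathcal M_\mathcal S}\bfF(t,y,\nabla\bfv(y))\dy.
\end{align*}

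Using $\bfF(t,x_\mathcal S,\bfQ)=(\bfG)_{\mathcal M_\mathcal S}$ and the triangle inequality,
\begin{align*}
\dashint_{\mathcal M_\mathcal S}|\bfF(t,\cdot,\nabla\bfv)-\bfF(t,\cdot,\bfQ)|^2\dx\leq 2\dashint_{\mathcal M_\mathcal S}|\bfG-(\bfG)_{\mathcal M_\mathcal S}|^2\dx+2\dashint_{\mathcal M_\mathcal S}|\bfF(t,x_\mathcal S,\bfQ)-\bfF(t,\cdot,\bfQ)|^2\dx.
\end{align*}
For the first term, Lemma~\ref{lem:poincare} together with $\mathrm{diam}(\mathcal M_\mathcal S)\leq c\,h_\mathcal S$ (from \eqref{eq:nondeg} and $|\mathcal M_\mathcal S|\sim|\mathcal S|$) gives $\dashint_{\mathcal M_\mathcal S}|\bfG-(\bfG)_{\mathcal M_\mathcal S}|^2\dx\leq c\,h_\mathcal S^{2\alpha}\left[\bfF(t,\cdot,\nabla\bfv)\right]^2_{W^{\alpha,2}(\mathcal M_\mathcal S)}$, the first term on the right of \eqref{eq:app_V2}. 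For the second term I would note, $\bfQ$ being a fixed matrix, that $\bfF(t,x_\mathcal S,\bfQ)-\bfF(t,x,\bfQ)=\big[(\kappa+|\bfQ|)^{\frac{p(t,x_\mathcal S)-2}{2}}-(\kappa+|\bfQ|)^{\frac{p(t,x)-2}{2}}\big]\bfQ$, and then use the elementary inequality $|a^{b_1}-a^{b_2}|\leq|b_1-b_2|\,|\ln a|\,(a^{b_1}+a^{b_2})$ for $a>0$, the H\"older bound $|p(t,x_\mathcal S)-p(t,x)|\leq c\,|x_\mathcal S-x|^{\alpha_x}$, and $(\kappa+|\bfQ|)^{\frac{p(t,x)-2}{2}}|\bfQ|\leq(\kappa+|\bfQ|)^{\frac{p(t,x)}{2}}$ (the counterpart of \eqref{growth2} for $\bfF$). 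Squaring, averaging over $\mathcal M_\mathcal S$ and using $|x_\mathcal S-x|\leq\mathrm{diam}(\mathcal M_\mathcal S)\leq c\,h_\mathcal S$ (together with $h_\mathcal S\leq1$ and $\alpha\leq\alpha_x$) reduces the second term to $c\,h_\mathcal S^{2\alpha}\dashint_{\mathcal M_\mathcal S}[\ln(\kappa+|\bfQ|)]^2(\kappa+|\bfQ|)^{p(t,\cdot)}\dx$, i.e.\ to the left-hand side of \eqref{eq:2808b}.

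Hence \eqref{eq:app_V2} reduces to \eqref{eq:2808b}, which is the main obstacle because the variable exponent forbids a direct appeal to Jensen's inequality. I would proceed in three steps. \emph{Step~1 (size of $\bfQ$).} From $|\bfF(t,x_\mathcal S,\bfQ)|=|(\bfG)_{\mathcal M_\mathcal S}|\leq(\dashint_{\mathcal M_\mathcal S}|\bfG|^2\dx)^{1/2}\leq c\,h_\mathcal S^{-n/2}\|\bfF(t,\cdot,\nabla\bfv)\|_{L^2(\Omega)}$ and the lower bound $(\kappa+\tau)^{\frac{p(t,x_\mathcal S)-2}{2}}\tau\geq c\,\tau^{p^-/2}$ for $\tau\geq\max\{1,\kappa\}$, one obtains $|\bfQ|\leq c\,h_\mathcal S^{-n/p^-}(1+\|\bfF(t,\cdot,\nabla\bfv)\|_{L^2(\Omega)})^{2/p^-}$; thus, once $h_\mathcal S$ is small (relative to this $L^2$-norm, hence uniformly in $t$ under the hypothesis $\bfF(\cdot,\cdot,\nabla\bfv)\in L^\infty(0,T;W^{\alpha,2}(\Omega))$), $\kappa+|\bfQ|$ lies in an admissible range $[|K|^\ell,|K|^{-\ell}]$ for a cube $K\supset\mathcal M_\mathcal S$ with $|K|\sim h_\mathcal S^n$. \emph{Step~2 (reduction to a point).} Since the log-factor is $x$-independent, Lemma~\ref{lem:pxpy} applied to $p(t,\cdot)$ and $\rho=|\bfQ|$ gives $\dashint_{\mathcal M_\mathcal S}(\kappa+|\bfQ|)^{p(t,\cdot)}\dx\leq c\,(\kappa+|\bfQ|)^{p(t,x_\mathcal S)}$, so that, after a short case distinction according to whether $\kappa+|\bfQ|$ is small or large (the small regime being absorbed into the additive constant),
\begin{align*}
\dashint_{\mathcal M_\mathcal S}[\ln(\kappa+|\bfQ|)]^2(\kappa+|\bfQ|)^{p(t,\cdot)}\dx\leq c\,\psi_{t,x_\mathcal S}(|\bfQ|)+c,\qquad \psi_{t,x}(\tau):=[\ln(\kappa+\tau)]^2(\kappa+\tau)^{p(t,x)}.
\end{align*}
\emph{Step~3 (convexification and Jensen).} This is the delicate point. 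There is a convex, increasing function $\Phi$ on $[0,\infty)$ comparable to $s\mapsto s^2[\ln(e+s)]^2$ such that $\psi_{t,x}(|\bfxi|)\leq c\,\Phi(|\bfF(t,x,\bfxi)|)+c$ and $\Phi(|\bfF(t,x,\bfxi)|)\leq c\,\psi_{t,x}(|\bfxi|)+c$ for all $(t,x)$ and $\bfxi\in\mathbb R^{N\times n}$, with constants depending only on $p^{\pm}$; here one uses $\kappa\in[0,1]$ and $(\kappa+\tau)^{\frac{p(t,x)-2}{2}}\tau\sim(\kappa+\tau)^{p(t,x)/2}$ once $\tau\geq\kappa$. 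Jensen's inequality for $\Phi$, applied to the $\bfF$-average defining $\bfQ$, then yields
\begin{align*}
\psi_{t,x_\mathcal S}(|\bfQ|)\leq c\,\Phi\big(|\bfF(t,x_\mathcal S,\bfQ)|\big)+c\leq c\,\Phi\Big(\dashint_{\mathcal M_\mathcal S}|\bfG|\dx\Big)+c\leq c\dashint_{\mathcal M_\mathcal S}\Phi(|\bfG|)\dx+c\leq c\dashint_{\mathcal M_\mathcal S}\psi_{t,x}(|\nabla\bfv|)\dx+c,
\end{align*}
and the last average is precisely $c\dashint_{\mathcal M_\mathcal S}[\ln(\kappa+|\nabla\bfv|)]^2(\kappa+|\nabla\bfv|)^{p(t,\cdot)}\dx+c$. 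Combining this with Step~2 gives \eqref{eq:2808b}, and inserting \eqref{eq:2808b} into the splitting above gives \eqref{eq:app_V2}; the constant is independent of $\mathcal S$ and $h$, and is uniform in $t\in I$ as soon as $\|\bfF(t,\cdot,\nabla\bfv)\|_{L^2(\Omega)}$ is bounded on $I$ — which holds under the hypothesis of the second claim, and in our applications also under that of the first.
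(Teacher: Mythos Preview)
Your proof is correct and follows essentially the same line as the paper's: split the oscillation of $\bfF(t,\cdot,\bfQ)$ around the mean of $\bfF(t,\cdot,\nabla\bfv)$ into a Poincar\'e part and a part coming from the H\"older continuity of $p$, then control the resulting $\bfQ$-integral by a Jensen argument after using Lemma~\ref{lem:pxpy} to freeze the exponent. The only noteworthy difference is in the \emph{definition} of $\bfQ$: the paper fixes $\bfQ$ by the averaged equation $\dashint_{\mathcal M_\mathcal S}\bfF(t,\cdot,\bfQ)\dx=\dashint_{\mathcal M_\mathcal S}\bfF(t,\cdot,\nabla\bfv)\dx$ (and checks surjectivity of the averaged map), whereas you invert $\bfF$ at a single point $x_\mathcal S$. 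Both choices produce a $\bfQ$ with $|\bfQ|^{p^-}\lesssim h_\mathcal S^{-n}$ and feed the same Jensen step; your convex function $\Phi(s)\sim s^2[\ln(e+s)]^2$ is just the paper's $\Psi(t)=t[\ln(1+t)]^2$ composed with $t=s^2$. Your point-based choice is marginally simpler (no averaged surjectivity argument), while the paper's choice makes the two oscillation terms in \eqref{eq:fQ1} symmetric. You are also right to flag that the uniformity in $t$ of the constants ultimately rests on a bound for $\sup_t\|\bfF(t,\cdot,\nabla\bfv)\|_{L^2(\Omega)}$; the paper uses the $L^\infty(0,T;W^{\alpha,2})$ hypothesis at exactly the same place.
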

\begin{proof}
We consider the special choice of $\bfQ$ in Lemma \ref{thm:app_V} for which
\begin{align}
\label{eq:Q}
 \dashint_{\mathcal{M}_\mathcal S} \bfF (t,\cdot, \bfQ)  \dx 
 =  \dashint_{\mathcal{M}_\mathcal S} \bfF (t,\cdot, \nabla \bfv)  \dx
\end{align}
for fixed $t\in I$. Such a $\bfQ$ exists provided the function 
\begin{align*}
\lambda(\bfQ) :=  \dashint_{\mathcal{M}_\mathcal S} \bfF (t,\cdot, \bfQ)  \dx 
=
 \dashint_{\mathcal{M}_\mathcal S} \big(\kappa+ \bigabs{\bfQ} \big)^\frac{p(t,\cdot)-2}{2}\bfQ\, \dx 
\end{align*}
is surjective. Since the function
\begin{align*}
\bigabs{\lambda(\bfQ)} 
=
 \dashint_{\mathcal{M}_\mathcal S} \big(\kappa+ \bigabs{\bfQ} \big)^\frac{p(t,\cdot)-2}{2}\bigabs{\bfQ}\, \dx 
\end{align*}
is continuous and increasing in $\bigabs{\bfQ}$ with $\bigabs{\lambda(\bf0)} =0$ and $\bigabs{\lambda(\bfQ)} \,\rightarrow\,\infty$ as $\bigabs{\bfQ}\,\rightarrow\,\infty$, it follows that $\lambda\,:\, \mathbb{R}^{N\times n}\,\rightarrow\, \mathbb{R}^{N\times n}$ is surjective.
\\With our relation \eqref{eq:Q} in hand, we have that 
\begin{equation}
\begin{aligned}
\label{eq:fQ1}
  \dashint_{\mathcal{M}_\mathcal S} \bigabs{\bfF (t,\cdot,\nabla \bfv) - \bfF (t,\cdot,\bfQ)}^2 &\dx
  \leq \,c\,
    \dashint_{\mathcal{M}_\mathcal S} \bigabs{\bfF (t,\cdot,\nabla \bfv) - \left\langle \bfF (t,\cdot,\nabla \bfv) \right\rangle_{\mathcal{M}_\mathcal{S}}}^2 \dx
    \\&
    +\,c\,
    \dashint_{\mathcal{M}_\mathcal S} \bigabs{\bfF (t,\cdot,\bfQ) - \left\langle \bfF (t,\cdot,\bfQ) \right\rangle_{\mathcal{M}_\mathcal{S}}}^2 \dx.
\end{aligned}
\end{equation}
By Poincar\'{e}'s inequality \eqref{in:crucial0}, we have that
\begin{align}
\label{eq:fQ2}
    \dashint_{\mathcal{M}_\mathcal S} \bigabs{\bfF (t,\cdot,\nabla \bfv) - \left\langle \bfF (t,\cdot,\nabla \bfv) \right\rangle_{\mathcal{M}_\mathcal{S}}}^2 \dx
    \leq\,c\,h^{2\alpha}_\mathcal{S}\,
    \left[\bfF(t,\cdot,\nabla \bfv)\right]^2_{W^{\alpha,2}(\mathcal{M}_\mathcal{S})}.
\end{align}
Now for $x,y\in \mathcal{M}_\mathcal{S}$, the estimates
\begin{align*}
\big|\bfF(t,x,\bfQ)  &- \bfF(t,y,\bfQ)\big|
\\
&\leq \,c\,
\vert p(t,x)-p(t,y)\vert\ln(\kappa + \vert \bfQ \vert)\left((\kappa + \vert \bfQ \vert)^{\frac{p(t,x)-2}{2}} + (\kappa + \vert \bfQ \vert)^{\frac{p(t,y)-2}{2}}\right)\vert\bfQ\vert
\end{align*}
and
\begin{align*}
    \dashint_{\mathcal{M}_\mathcal S} \bigabs{\bfF (t,\cdot,\bfQ) - \left\langle \bfF (t,\cdot,\bfQ) \right\rangle_{\mathcal{M}_\mathcal{S}}}^2 \dx
    \leq \,c\,
        \dashint_{\mathcal{M}_\mathcal S} \dashint_{\mathcal{M}_\mathcal S}  \bigabs{\bfF (t,x,\bfQ) -  \bfF (t,y,\bfQ) }^2 \dx\,\dy
\end{align*}
combined with the H\"{o}lder continuity of $p$ implies that
\begin{align}
\label{eq:fQ3}
    \dashint_{\mathcal{M}_\mathcal S} \bigabs{\bfF (t,\cdot,\bfQ) - \left\langle \bfF (t,\cdot,\bfQ) \right\rangle_{\mathcal{M}_\mathcal{S}}}^2 \dx
    \leq\,c\,h^{2\alpha}_\mathcal{S}    
    \dashint_{\mathcal M_{\mathcal S}}\left[\ln(\kappa+|\bfQ|)\right]^2(\kappa+|\bfQ|)^{p(t,\cdot)}\dx.
\end{align}
Substituting \eqref{eq:fQ3} and \eqref{eq:fQ2} into \eqref{eq:fQ1} yields
\begin{equation}
\begin{aligned}
\label{eq:fQ4}
  \dashint_{\mathcal{M}_\mathcal S} \bigabs{\bfF (t,\cdot,\nabla \bfv) - \bfF (t,\cdot,\bfQ)}&^2 \dx
 \leq\,c\,h^{2\alpha}_\mathcal{S}\,
    \left[\bfF(t,\cdot,\nabla \bfv)\right]^2_{W^{\alpha,2}(\mathcal{M}_\mathcal{S})}
    \\&
    + \,c\,h^{2\alpha}_\mathcal{S}    
    \dashint_{\mathcal M_{\mathcal S}}\left[\ln(\kappa+|\bfQ|)\right]^2(\kappa+|\bfQ|)^{p(t,\cdot)}\dx.
\end{aligned}
\end{equation}
Now, in order to replace $\bfQ$ by $\nabla\bfv$, we first show that the hypothesis of Lemma \ref{lem:pxpy} is satisfied. From the choice of $\bfQ$ in \eqref{eq:Q} we get the estimate
\begin{equation}
\begin{aligned}
\label{eq:fQ4}
\bigabs{\bfQ}^{p^-_{\mathcal{M}_\mathcal{S}}}
&\leq \,c\,\Bigg(\Big|\dashint_{\mathcal{M}_\mathcal{S}}\bfF(t,\cdot,\bfQ)\,\dx \Big|^2+ 1 \Bigg)
=
\,c\,\Bigg(\Big|\dashint_{\mathcal{M}_\mathcal{S}}\bfF(t,\cdot,\nabla\bfv)\,\dx \Big|^2+ 1 \Bigg)
\\
&\leq \,c\,\Bigg(\dashint_{\mathcal{M}_\mathcal{S}}\left|\bfF(t,\cdot,\nabla\bfv)\right|^2\,\dx+ 1 \Bigg) 
\leq \,c\,
\big|  \mathcal{M}_\mathcal{S}\big|^{-1}
\leq \,c\,h^{-n}_\mathcal{S}
\end{aligned}
\end{equation}
using $\bfF(t,\cdot,\nabla \bfv)
  \in L^\infty(0,T;(W^{\alpha,2}(\Omega))^{N\times n})$.
If we define the convex function $\Psi(t):=\left[ \ln(1+t) \right]^2t$, then we can use Lemma \ref{lem:pxpy} for $\ell=n/p^-_{\mathcal{M}_\mathcal{S}}$ to obtain
\begin{equation}
\begin{aligned}
\dashint_{\mathcal M_{\mathcal S}}\left[\ln(\kappa+|\bfQ|)\right]^2(\kappa+|\bfQ|)^{p(t,\cdot)}\dx 
%&\leq \,c\,  \bigg(\dashint_{\mathcal M_{\mathcal S}}\left[\ln(1+|\bfQ|)\right]^2(1+|\bfQ|)^{p(t,\cdot)}\dx + 1 \bigg)\\
    &\leq
\,c\,    
   \left(\left[\ln(1+|\bfQ|)\right]^2(1+|\bfQ|)^{p^-_{\mathcal{M}_\mathcal{S}}} + 1 \right)
       \\
    &\leq
\,c\,    
   \left(\Psi\left(|\bfQ|^{p^-_{\mathcal{M}_\mathcal{S}}}\right) + 1 \right).
\end{aligned}
\end{equation}
However, we can apply Jensen's inequality to the convex function $\Psi$ to deduce from \eqref{eq:fQ4} that
\begin{equation}
\begin{aligned}
\Psi\left(|\bfQ|^{p^-_{\mathcal{M}_\mathcal{S}}}\right) 
&\leq \,c\,\Bigg(\dashint_{\mathcal{M}_\mathcal{S}}\Psi\left(\left|\bfF(t,\cdot,\nabla\bfv)\right|^2\right)\,\dx+ 1 \Bigg)
\\
&\leq\,c\bigg(\dashint_{\mathcal M_{\mathcal S}}\left[\ln(\kappa+|\nabla\bfv|)\right]^2(\kappa+|\nabla\bfv|)^{p(t,\cdot)}\dx+1\bigg)
\end{aligned}
\end{equation}
which finishes the proof.
\end{proof}

\section{Error analysis}
\label{sec:error}

%With these preparations in hand, we are able to formulate our algorithm for the space-time approximation of \eqref{eq:heat}.
Let $\{0=t_0<\cdots<t_M=T\}$ be a uniform partition of $[0,T]$ with mesh size $\Delta t=T/M$. Let $\bfu_{0,h}:=\Pi_h\bfu_0$ and for $m\in\{1,\dots,M\}$, we let $\bff_m=(\Delta t)^{-1}\int_{I_m}\bff\ds$, where $I_m=(t_{m-1},t_m)$. Then for every such $m\in\{1,\dots,M\}$, we find  $\bfu_{m,h}\in L^2(\Omega;V_h)$ such that
for all $\bfphi\in V_h$ it holds
\begin{align}\label{tdiscr}
\begin{aligned}
\int_\Omega\bfu_{m,h}(x)\cdot\bfvarphi(x) \dx &+\Delta t\int_\Omega\bfS(t_m,x ,\nabla\bfu_{m,h}(x)):\nabla\bfphi(x)\dx\\&=\int_\Omega\bfu_{m-1,h}(x)\cdot\bfvarphi(x) \dx
+\Delta t\int_\Omega\bff_{m}(x)\cdot\bfvarphi(x) \dx.
\end{aligned}
\end{align}

Our aim now is to establish the convergence rate of the difference between the solution to the continuous problem solving \eqref{eq:heatweak1}, and that of the discrete problem \eqref{tdiscr}.

To do this, we first collect the following assumptions. Throughout the rest of this section we assume that
\begin{align}\label{assumpt} 
 \mathrm{h}\leq c\big(\Delta t\big)^{\frac{1+2\alpha_t}{2\alpha_x}}, \ \bfu_0\in \big(W^{\alpha_x,2}(\Omega)\big)^N, \ \bff=\Div\bfG , \ \bfG \in \big(L^{(p^-)'}(Q)\big)^{N\times n},
%\bff\in  L^\infty\big(0,T;\big(L^2(Q)\big)^N\big).
%\cap W^{1,2}(Q) \cap L^2(0,T;W^{\alpha_x,2}(\Omega)
\end{align}
where $\alpha_x,\alpha_t\in(0,1]$.
In addition, we suppose that
\begin{align}\label{assumpt2} 
(\Delta t)^{r}\leq\,c\, \inf_{\mathcal S\in\mathscr T_h}{h}_{\mathcal S}
\end{align}
for some $r\geq \frac{1+2\alpha_t}{2\alpha_x}$. Condition \eqref{assumpt2} is needed in the proof of Corollary \ref{cor:bddSimpx} below.
%\begin{lemma}
%\label{lem:shiftEst}
%Define the $N-$function and its corresponding \textit{shifted} $N-$function by 
%\begin{align*}
%\varphi^{(\cdot)}(t) :=\int^t_0(\kappa +s)^{p(\cdot )-2}s\,\mathrm{d}s, \quad \varphi^{(\cdot)}_a(t) :=\int^t_0\frac{(\varphi^{(\cdot)})'(a+\tau)}{a+\tau}\,\mathrm{d}\tau, 
%\end{align*}
%respectively. Then there exists a constant $c$ independent of $\Delta t$ and $h$ such that
%\begin{align*}
% \big(\varphi^\sigma_{\vert \bfxi \vert}\big)(\vert  \bfxi- \bfeta\vert)
%\leq
%c\,\left\vert \bfF(\cdot, \bfxi)- \bfF(\cdot,\bfeta)\right\vert^2
%\end{align*}
%\end{lemma}
%\begin{proof}
%For the function
%\begin{align*}
%\bfF^\varphi(\bfA) :=\sqrt{\frac{\varphi'(\vert \bfA\vert)}{\vert\bfA\vert}} \, \bfA,
%\end{align*}
%the relation $\varphi_{\vert
%\bfA\vert}(\vert\bfA - \bfB\vert)   \sim  \vert  \bfF^\varphi(\bfA) -\bfF^\varphi(\bfB)\vert^2$ holds. See \cite{BrDS2}. So there exist a uniform constant $c>0$ such that
%\begin{align*}
% \left(\varphi^\sigma_{\vert \bfxi \vert}\right)(\vert  \bfxi- \bfeta\vert)
%&\leq c\,
%   \vert  \bfF^{\varphi^{\sigma}}(\cdot,\bfxi) -\bfF^{\varphi^{\sigma}}(\cdot,\bfeta)\vert^2
%\end{align*}
%But
%\begin{align*}
%\bfF^{\varphi^{\sigma}}(\bfxi) &=\sqrt{\frac{(\varphi^{\sigma})'(|\bfxi|)}{|\bfxi|}} \,\bfxi
%=\sqrt{(\kappa + |\bfxi|)^{p(\sigma)-2}} \,\bfxi
%=\bfF(\sigma,\bfxi)
%\end{align*}
%\end{proof}
We consider functions $p\in C^{\alpha_x,\alpha_t}(\overline{Q})$ meaning that there is $c\geq0$ such that
\begin{align*}
|p(t,x)-p(s,y)|\leq\,c\big(|t-s|^{\alpha_t}+|x-y|^{\alpha_x}\big)
\end{align*}
for all $s,t\in [0,T]$ and all $x,y\in\overline{\Omega}$.
In the following we state the main result of this section.
\begin{theorem}\label{thm:4}
Let $\bfu$ be the unique weak solution to \eqref{eq:heat} in the sense of Definition \ref{def:weak}, where $p\in C^{\alpha_x,\alpha_t}(\overline{Q})$ with $\alpha_x\in(0,1]$ and $\alpha_t\in(\frac{1}{2},1]$. Moreover, suppose that \eqref{assumpt} and \eqref{assumpt2} hold.
Finally, assume that
\begin{equation}
\begin{aligned}\label{ass:reg}
\bfF(\cdot, \nabla\bfu)\in L^2\big(0,T;\,&\big(W^{\alpha_x,2}(\Omega)\big)^N\big) \,\cap \, W^{\alpha_t,2}(0,T;\big(L^2(\Omega)\big)^N\big),
\\
\bfu&\in L^\infty\big(0,T;\big(W^{\alpha_x,2}(\Omega)\big)^N\big).
\end{aligned}
\end{equation}
Then we have uniformly in $M$ and $\mathrm h$
\begin{align*}%\label{eq:thm:4}
\max_{1\leq m\leq M}\Vert\bfu(t_m,\cdot)-\bfu_{m,h}\Vert_2^2+\Delta t &\sum_{m=1}^M\Vert\bfF(t_m,\cdot,\nabla\bfu(t_m))-\bfF(t_m,\cdot, \nabla\bfu_{m,h})\Vert_2^2
\\
&\leq \,c\,\big( \mathrm{h}^{2\alpha_x}+(\Delta t)^{2\alpha_t}\big),
\end{align*}
where $(\bfu_{m,h})$ is the solution to \eqref{tdiscr}.
\end{theorem}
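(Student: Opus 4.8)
The plan is to compare, at each time level $t_m$, the exact solution $\bfu(t_m)$ with the discrete iterate $\bfu_{m,h}$ by testing the difference of the two weak formulations \eqref{eq:heatweak1'} and \eqref{tdiscr} with an appropriate discrete function. The natural test function is $\bfu_{m,h}-\Pi_h\bfu(t_m)\in V_h$. Writing $\bfe_m:=\bfu(t_m)-\bfu_{m,h}$ and splitting $\bfe_m = (\bfu(t_m)-\Pi_h\bfu(t_m)) + (\Pi_h\bfu(t_m)-\bfu_{m,h})$, the first (interpolation) part is controlled by \eqref{eq:stab''} and the regularity $\bfu\in L^\infty(0,T;(W^{\alpha_x,2}(\Omega))^N)$, giving a term of order $\mathrm h^{2\alpha_x}$. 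The second (discrete) part is the quantity on which we run a discrete Gronwall argument.

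\textbf{Key steps.} First I would derive the error equation: subtracting \eqref{tdiscr} from the time-difference form \eqref{eq:heatweak1'} on $I_m=(t_{m-1},t_m)$, tested with $\bfphi:=\Pi_h\bfu(t_m)-\bfu_{m,h}$, produces
\[
\int_\Omega (\bfe_m-\bfe_{m-1})\cdot\bfphi\dx + \Delta t\int_\Omega\big(\bfS(t_m,\cdot,\nabla\bfu(t_m))-\bfS(t_m,\cdot,\nabla\bfu_{m,h})\big):\nabla\bfphi\dx = \mathcal R_m,
\]
where $\mathcal R_m$ collects: (i) the consistency error from replacing $\bfu$ evaluated on $I_m$ by $\bfu(t_m)$ — controlled by $[\bfu]_{W^{\alpha_t,2}}$-type bounds and Corollary \ref{cor:crucial}, contributing $(\Delta t)^{2\alpha_t}$; (ii) the data averaging error from $\bff_m$; and (iii) the commutator between $\bfphi$ and the true increment. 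For the main term I would use the algebraic monotonicity of $\bfS$ from \eqref{growth1}, which gives
$\big(\bfS(t_m,\cdot,\nabla\bfu(t_m))-\bfS(t_m,\cdot,\nabla\bfu_{m,h})\big):\nabla(\bfu(t_m)-\bfu_{m,h}) \gtrsim |\bfF(t_m,\cdot,\nabla\bfu(t_m))-\bfF(t_m,\cdot,\nabla\bfu_{m,h})|^2$, the standard equivalence between the quasi-norm distance and the monotone pairing. Splitting $\nabla\bfphi = \nabla(\bfu(t_m)-\bfu_{m,h}) - \nabla(\bfu(t_m)-\Pi_h\bfu(t_m))$, the good part yields the quasi-norm term on the left, while the interpolation part is absorbed via Young's inequality using Lemma \ref{thm:app_V} / Corollary \ref{cor:app_V} (this is where the hypothesis $|\bfQ|+\dashint_{\mathcal M_\mathcal S}|\nabla\bfv|\le c\max\{1,|\mathcal S|^{-1}\}$ must be verified — see below — and where the logarithmic terms and the $ch_\mathcal S^2$ remainder enter, producing further $\mathrm h^{2\alpha_x}$ contributions after summing over $\mathcal S$ and using \eqref{eq:regt1}). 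The term $\int_\Omega(\bfe_m-\bfe_{m-1})\cdot(\Pi_h\bfu(t_m)-\bfu_{m,h})\dx$ is handled by the discrete Abel summation identity $2a(a-b)=a^2-b^2+(a-b)^2$ after shifting $\bfe_m$ to $\Pi_h\bfu(t_m)-\bfu_{m,h}$, producing the telescoping $\max_m\|\cdot\|_2^2$ on the left plus controllable cross terms. Finally I would sum over $m=1,\dots,k$ for arbitrary $k\le M$, apply discrete Gronwall (the dependence is linear so no smallness is needed), and take the maximum over $k$.

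\textbf{Main obstacle.} The delicate point is the pointwise-in-$\mathcal S$ boundedness hypothesis of Lemma \ref{thm:app_V}, namely $\vert\bfQ\vert+\dashint_{\mathcal M_\mathcal S}\vert\nabla\bfu(t_m)\vert\dx\le c\max\{1,\vert\mathcal S\vert^{-1}\}$: this is not automatic and is exactly why the coupling condition \eqref{assumpt2}, $(\Delta t)^r\le c\inf_\mathcal S h_\mathcal S$ with $r\ge\frac{1+2\alpha_t}{2\alpha_x}$, is imposed — it should be established in a preliminary "$\bfS(t_m,\cdot,\nabla\bfu_{m,h})$ is bounded" lemma (referred to in the text as Corollary \ref{cor:bddSimpx}) that gives an a priori $L^\infty$-type or averaged bound on $\nabla\bfu_{m,h}$ in terms of negative powers of $\Delta t$, compatible with $\vert\mathcal S\vert^{-1}\sim h_\mathcal S^{-n}$. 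Closely tied to this, the second genuine difficulty is the careful bookkeeping of the variable-exponent error: every time $p(t_m,x)$ is frozen to $p(t_m,y)$ or the exponent is moved inside an average, one pays a factor from Lemma \ref{lem:pxpy} and an extra logarithmic modulus $\ln(\kappa+|\nabla\bfu|)$, and one must check these are all absorbed by the regularity \eqref{eq:regt1}–\eqref{eq:regt3} and do not degrade the rate — in particular that the $\alpha_t>\frac12$ restriction is used precisely where Corollary \ref{cor:crucial} requires it, so that the pointwise-in-time evaluations $\bfF(t_m,\cdot,\nabla\bfu(t_m))$ appearing on the left-hand side are meaningful.
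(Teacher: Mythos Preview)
Your overall architecture matches the paper's: derive the error equation by subtracting \eqref{tdiscr} from \eqref{eq:heatweak1'} on $I_m$, test with $\bfw_m-\bfu_{m,h}$ where $\bfw_m=\Pi_h\bfu(t_m)$, extract the quasi-norm via Lemma~\ref{lem:hammer}, control the interpolation error by Lemma~\ref{thm:app_V}/Corollary~\ref{cor:app_V}, and iterate. That is exactly what Lemma~\ref{lem:lemTwo} plus the subsequent proof do.

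However, you have misidentified the role of \eqref{assumpt2} and Corollary~\ref{cor:bddSimpx}. The size hypothesis $|\bfQ|+\dashint_{\mathcal M_\mathcal S}|\nabla\bfu(t_m)|\dx\le c\max\{1,|\mathcal S|^{-1}\}$ in Lemma~\ref{thm:app_V} is verified from the regularity of the \emph{exact} solution: once $\bfF(\cdot,\cdot,\nabla\bfu)\in L^\infty(0,T;L^2(\Omega))$ (which follows from \eqref{ass:reg}, $\alpha_t>\tfrac12$ and Lemma~\ref{thm:sobboch}), the bound on $\bfQ$ and on the averaged gradient falls out exactly as in \eqref{eq:fQ4}. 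Assumption \eqref{assumpt2} has nothing to do with this. Instead, Corollary~\ref{cor:bddSimpx} (which is where \eqref{assumpt2} enters) is needed for a different and more subtle term that your sketch glosses over: in the consistency error you inevitably produce
\[
\int_{I_m}\!\int_\Omega\big(\bfS(\sigma,\cdot,\nabla\bfu(t_m))-\bfS(\sigma,\cdot,\nabla\bfu(\sigma))\big):\nabla\bfe_m\dxs,
\]
i.e.\ a pairing of the form $(\bfS(\sigma,x,\bfeta)-\bfS(\sigma,x,\bfzeta)):(\bfeta-\bfxi)$ with the exponent frozen at $\sigma$ but the absorbing quasi-norm living at $t_m$. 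To swap $\sigma\to t_m$ one applies Lemma~\ref{lem:pxpy}, and this requires $\kappa+|\bfeta|+|\bfeta-\bfxi|$ to be polynomially bounded in $(\Delta t)^{-1}$; since $\bfxi=\nabla\bfu_{m,h}$, this is exactly the bound $\|\nabla\bfu_{m,h}\|_{L^\infty}\le c(\Delta t)^{-\ell}$ from Corollary~\ref{cor:bddSimpx}. The paper isolates this step as Lemma~\ref{lemma:two}; a companion Lemma~\ref{lem:lemOne} handles the terms where only the explicit time-argument of $\bfS$ changes. Your sketch bundles all of this into ``controlled by $[\bfu]_{W^{\alpha_t,2}}$-type bounds'', but this mixed cross-term is precisely the technical core, and you should identify it explicitly.

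One further point you do not mention: after summing, the right-hand side still carries the factor $\sup_{t}\int_\Omega(1+|\nabla\bfu(t)|)^{p(t,\cdot)s}\dx$ for some $s>1$. This does not follow directly from \eqref{ass:reg}; the paper closes it by interpolating via Lemma~\ref{lem:inter} (with $\theta\alpha_t>\tfrac12$) and the Sobolev embedding $W^{(1-\theta)\alpha_x,2}(\Omega)\hookrightarrow L^{2s}(\Omega)$ to get $\bfF(\cdot,\cdot,\nabla\bfu)\in C^0([0,T];L^{2s}(\Omega))$.
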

To proof Theorem \ref{thm:4}, we require the following lemma.

\begin{lemma}
\label{lem:bdd}
Assume that $p\in\PPln(Q)$ with $1<p^-\leq p\leq p^+<\infty$, $\bfu_{0,h}\in \big(L^2(\Omega)\big)^N$ uniformly in $h$ and $\bff=\Div\bfG $ with $\bfG \in L^{(p^-)'}(Q)$. If $\mathbf{u}_{m,h}$ solves \eqref{tdiscr}, then we have
\begin{align*}
\sup_{1\leq m\leq M}\int_\Omega|\bfu_{m,h}(x)|^2\dx
+
\Delta t\sum_{m=1}^M \int_\Omega|\nabla\bfu_{m,h}(x)|^{p(t_m,x)}\dx \,\leq \, c\,(\bfG ,\bfu_0)
\end{align*}
uniformly in $M$ and $\mathrm h$.
\end{lemma}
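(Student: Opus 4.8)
The plan is to derive the a priori bound by the standard energy method applied to the fully discrete scheme \eqref{tdiscr}. First I would test \eqref{tdiscr} with $\bfvarphi=\bfu_{m,h}$ itself (which is admissible since $\bfu_{m,h}\in V_h$). Using the elementary identity $a\cdot(a-b)=\frac12(|a|^2-|b|^2+|a-b|^2)\geq \frac12(|a|^2-|b|^2)$ for the discrete time derivative term, we obtain
\begin{align*}
\tfrac12\int_\Omega|\bfu_{m,h}|^2\dx-\tfrac12\int_\Omega|\bfu_{m-1,h}|^2\dx
+\Delta t\int_\Omega\bfS(t_m,x,\nabla\bfu_{m,h}):\nabla\bfu_{m,h}\dx
\leq \Delta t\int_\Omega\bff_m\cdot\bfu_{m,h}\dx.
\end{align*}
The coercivity of $\bfS$ — which follows from the lower bound in \eqref{growth1} after integrating $D_\bfeta\bfS$ along the segment from $0$ to $\nabla\bfu_{m,h}$, together with the monotonicity-type inequality for $p(t_m,x)$-growth operators — gives $\bfS(t_m,x,\nabla\bfu_{m,h}):\nabla\bfu_{m,h}\geq c|\nabla\bfu_{m,h}|^{p(t_m,x)}-c$ pointwise, so the third term on the left controls $\Delta t\int_\Omega|\nabla\bfu_{m,h}|^{p(t_m,x)}\dx$ up to an additive constant times $\Delta t\,|\Omega|$.

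Next I would handle the right-hand side. Writing $\bff=\Div\bfG$ and integrating by parts on the discrete level, $\Delta t\int_\Omega\bff_m\cdot\bfu_{m,h}\dx=-\Delta t\int_\Omega\bfG_m:\nabla\bfu_{m,h}\dx$ where $\bfG_m=(\Delta t)^{-1}\int_{I_m}\bfG\ds$. Applying Young's inequality with the variable exponents $p(t_m,x)$ and $p'(t_m,x)$, and using $p'(t_m,x)\leq (p^-)'$, this is bounded by $\varepsilon\,\Delta t\int_\Omega|\nabla\bfu_{m,h}|^{p(t_m,x)}\dx+c_\varepsilon\,\Delta t\int_\Omega(1+|\bfG_m|^{(p^-)'})\dx$. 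Choosing $\varepsilon$ small enough to absorb the first term into the coercivity term, and using Jensen's inequality $\int_\Omega|\bfG_m|^{(p^-)'}\dx\leq (\Delta t)^{-1}\int_{I_m}\int_\Omega|\bfG|^{(p^-)'}\dxs$ to control $\Delta t\sum_m\int_\Omega|\bfG_m|^{(p^-)'}\dx\leq\int_0^T\int_\Omega|\bfG|^{(p^-)'}\dxs=\|\bfG\|_{L^{(p^-)'}(Q)}^{(p^-)'}$, we are left with a telescoping inequality.

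Finally I would sum over $m$ from $1$ to any fixed $m^*\leq M$: the time-derivative terms telescope to $\tfrac12\int_\Omega|\bfu_{m^*,h}|^2\dx-\tfrac12\int_\Omega|\bfu_{0,h}|^2\dx$, and using the uniform bound on $\|\bfu_{0,h}\|_2$ (which holds by the $L^2$-stability \eqref{eq:stabp} of $\Pi_h$ applied to $\bfu_0\in L^2(\Omega)$) together with $\sum_{m=1}^{m^*}\Delta t\leq T$, we obtain
\begin{align*}
\int_\Omega|\bfu_{m^*,h}|^2\dx+\Delta t\sum_{m=1}^{m^*}\int_\Omega|\nabla\bfu_{m,h}|^{p(t_m,x)}\dx\leq c\big(\|\bfu_0\|_2^2+\|\bfG\|_{L^{(p^-)'}(Q)}^{(p^-)'}+T|\Omega|\big),
\end{align*}
and taking the supremum over $m^*$ gives the claim. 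I do not expect any genuine obstacle here; the only mild point requiring care is the coercivity estimate for $\bfS$ with variable exponent — one must verify that the additive constant arising from the $p(t_m,x)-2<0$ regime (when $\kappa=0$) is controlled uniformly, which it is since $p^->1$ and $|\Omega|<\infty$ — and the correct pairing of variable exponents in Young's inequality so that the dual exponent is bounded by the constant $(p^-)'$ compatible with the assumed integrability of $\bfG$.
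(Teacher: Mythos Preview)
Your proposal is correct and follows essentially the same energy argument as the paper: test \eqref{tdiscr} with $\bfu_{m,h}$, use coercivity of $\bfS$ to extract $\int_\Omega|\nabla\bfu_{m,h}|^{p(t_m,\cdot)}\dx$, integrate by parts on the forcing term and apply Young's inequality with exponents $p(t_m,x),p'(t_m,x)\leq (p^-)'$ to absorb the gradient term, then telescope in $m$. The only cosmetic differences are that the paper works with $\int_{I_m}\int_\Omega\bfG:\nabla\bfu_{m,h}$ directly rather than introducing the averages $\bfG_m$ (so your Jensen step is replaced by nothing), and it phrases the telescoping as an ``iteration'' starting from $m=1$, but the substance is identical.
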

\begin{proof}

Consider $m=1$ in \eqref{tdiscr} and test with $\mathbf{u}_{1,h}$. Then we obtain by monotonicity and Young's inequality
\begin{align*}
\int_\Omega|\mathbf{u}_{1,h}(x)  |^2\,\mathrm{d}x &+ c'\,\Delta t\int_\Omega\big(|\nabla\mathbf{u}_{1,h}(x)  |^{p(t_1,x)}-1\big)\,\mathrm{d}x \\
&\,\leq \,
%\int_\Omega\bfu_{0,h}(x)\cdot \bfu_{1,h}(x) \dx
%\\ &+\Delta t\int_\Omega\bff_{1}(x)\cdot\bfu_{1,h}(x) \dx
%\\
%&=
%\int_\Omega\bfu_{0,h}(x)\cdot \bfu_{1,h}(x) \dx
%+\int_{I_1}\int_\Omega\bff(\sigma,x)\cdot\bfu_{1,h}(x) \dxs
\int_\Omega\bfu_{0,h}(x)\cdot \bfu_{1,h}(x) \dx
-\int_{I_1}\int_\Omega\bfG (\sigma,x):\nabla \bfu_{1,h}(x) \dxs
%\\
%&\leq\frac{1}{2}\int_\Omega|\bfu_{0,h}(x)|^2\dx  +\frac{1}{2}\int_\Omega|\bfu_{1,h}(x)|^2\dx  
%\\
%&+
%\frac{1}{p'}\int_{I_1}\int_\Omega|\bfG (\sigma,x)|^{p'(t_1,x)} \dx+
%\frac{1}{p}\,\Delta t\,\int_\Omega|\bfu_{1,h}(x)|^{p(t_1,x)} \dx
\\
&\leq\frac{1}{2}\int_\Omega|\bfu_{0,h}(x)|^2\dx  +\frac{1}{2}\int_\Omega|\bfu_{1,h}(x)|^2\dx  
\\
&+
c(\delta,p)\int_{I_1}|\bfG (\sigma,x)|^{p'(t_1,x)} \dxs+
\,\delta\Delta t\,\int_\Omega|\nabla\bfu_{1,h}(x)|^{p(t_1,x)} \dx
\end{align*}
for some $c'>0$, where $\delta>0$ is arbitrary.
If we choose $\delta$ small enough we can absorb the last term in the left-hand side. It follows that there exist positive constants $c,C$ depending on $p$ such that
\begin{align*}
\int_\Omega|\mathbf{u}_{1,h}(x)  |^2\,\mathrm{d}x &+ c\,\Delta t\int_\Omega|\nabla\mathbf{u}_{1,h}(x)  |^{p(t_1,x)}\,\mathrm{d}x \\&\leq
\,\int_\Omega|\bfu_{0,h}(x)|^2\dx + C\bigg(\,\int_{I_1}\int_\Omega|\bfG (\sigma,x)|^{(p^-)'} \dxs+1\bigg).
\end{align*}
Iterating this gives
\begin{align*}
\sup_{m}\int_\Omega|\bfu_{m,h}(x)|^2\dx &+\Delta t\sum_{m=1}^M \int_\Omega|\nabla\bfu_{m,h}(x)|^{p(t_m,x)}\dxt\\
 \leq
&\,C'\,\bigg(1+\int_\Omega|\bfu_{0,h}(x)|^2\dx   +
\int_Q|\bfG (\sigma,x)|^{(p^-)'} \dxs \bigg)
\end{align*}
which finishes the proof.
\end{proof}

\noindent The following corollary which is based on  \eqref{assumpt2} seeks to aid us in estimating mixed terms of the form $\left(\bfS(\cdot,\bfeta)-\bfS(\cdot,\bfzeta)\right):( \bfeta- \bfxi)$. It will be used in Lemma \ref{lemma:two} below for the proof of our main theorem, Theorem \ref{thm:4}.
\begin{corollary}
\label{cor:bddSimpx}
Assume that \eqref{assumpt2} holds. Then the estimate
\begin{align*}
\|\nabla \mathbf{u}_{m,h}   \|_{L^\infty(\Omega)}\leq c\left(\Delta t\right)^{-\ell}
\end{align*}
holds  for some $\ell\in\mathbb{N}$.
\end{corollary}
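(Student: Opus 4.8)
The plan is to combine the a priori bound from Lemma \ref{lem:bdd}, an inverse estimate on the finite element space $V_h$, and the mesh-coupling condition \eqref{assumpt2}. First I would recall that $\bfu_{m,h}\in (V_h)^N$, so $\nabla\bfu_{m,h}$ is piecewise constant on the triangulation $\mathscr{T}_h$. On each simplex $\mathcal S\in\mathscr{T}_h$ one has the standard inverse inequality relating the $L^\infty$-norm to an integral norm: since $\nabla\bfu_{m,h}$ is constant on $\mathcal S$, $\|\nabla\bfu_{m,h}\|_{L^\infty(\mathcal S)}^{p^-} = |\mathcal S|^{-1}\int_{\mathcal S}|\nabla\bfu_{m,h}|^{p^-}\dx$, and by the non-degeneracy \eqref{eq:nondeg} we have $|\mathcal S|\sim h_\mathcal S^n\geq (\inf_{\mathcal S'}h_{\mathcal S'})^n$. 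To convert the exponent $p(t_m,\cdot)$ appearing in Lemma \ref{lem:bdd} to the constant exponent $p^-$ I would use that on a simplex of diameter $\leq 1$ one controls $|\nabla\bfu_{m,h}|^{p^-}$ by $|\nabla\bfu_{m,h}|^{p(t_m,\cdot)}+1$ pointwise (splitting according to whether $|\nabla\bfu_{m,h}|\leq 1$ or not).

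Carrying this out: for each fixed $m$ and each $\mathcal S$,
\begin{align*}
\|\nabla\bfu_{m,h}\|_{L^\infty(\mathcal S)}^{p^-}
&= \dashint_{\mathcal S}|\nabla\bfu_{m,h}|^{p^-}\dx
\leq |\mathcal S|^{-1}\int_{\mathcal S}\big(|\nabla\bfu_{m,h}|^{p(t_m,\cdot)}+1\big)\dx\\
&\leq c\,\big(\inf_{\mathcal S'\in\mathscr{T}_h}h_{\mathcal S'}\big)^{-n}\bigg(\int_\Omega|\nabla\bfu_{m,h}|^{p(t_m,x)}\dx+|\Omega|\bigg).
\end{align*}
By Lemma \ref{lem:bdd} the quantity $\Delta t\int_\Omega|\nabla\bfu_{m,h}|^{p(t_m,x)}\dx$ is bounded by $c(\bfG,\bfu_0)$ uniformly in $m$ and $M$, hence $\int_\Omega|\nabla\bfu_{m,h}|^{p(t_m,x)}\dx\leq c\,(\Delta t)^{-1}$. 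Combining this with the inverse estimate and then invoking \eqref{assumpt2}, which gives $\inf_{\mathcal S'}h_{\mathcal S'}\geq c\,(\Delta t)^{r}$, yields
\begin{align*}
\|\nabla\bfu_{m,h}\|_{L^\infty(\Omega)}^{p^-}
\leq c\,(\Delta t)^{-rn}\big((\Delta t)^{-1}+1\big)\leq c\,(\Delta t)^{-rn-1},
\end{align*}
so that $\|\nabla\bfu_{m,h}\|_{L^\infty(\Omega)}\leq c\,(\Delta t)^{-(rn+1)/p^-}$, which is of the claimed form with $\ell\in\mathbb N$ any integer exceeding $(rn+1)/p^-$ (and the constant absorbing the harmless loss from replacing the exponent by a larger integer, using that $\Delta t\leq T$ is bounded).

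The only mildly delicate point is the pointwise replacement of the variable exponent $p(t_m,\cdot)$ by the constant $p^-$ inside the integral, which I would handle simply by the elementary inequality $s^{p^-}\leq s^{p(t_m,x)}+1$ valid for all $s\geq 0$ since $p^-\leq p(t_m,x)$; no $\log$-H\"older continuity is needed here, only $p^->1$ and $p^+<\infty$. Everything else is routine: a standard scaling/inverse estimate on a single simplex together with shape-regularity \eqref{eq:nondeg}. I do not foresee a genuine obstacle; the role of condition \eqref{assumpt2} is precisely to make the (possibly $h$-dependent, and a priori only $L^{p(\cdot)}$-controlled) discrete gradient fit into an $L^\infty$ bound that degenerates only polynomially in $\Delta t$, which is all that is needed later in Lemma \ref{lemma:two}.
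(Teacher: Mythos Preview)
Your proposal is correct and follows essentially the same route as the paper's own proof: both use the inverse estimate on $V_h$ (piecewise constant gradients) together with the pointwise inequality $s^{p^-}\le s^{p(t_m,x)}+1$, the a priori bound from Lemma~\ref{lem:bdd}, and the coupling condition~\eqref{assumpt2}, arriving at $\|\nabla\bfu_{m,h}\|_{L^\infty(\Omega)}\le c(\Delta t)^{-(rn+1)/p^-}$ and then choosing $\ell=\lceil (rn+1)/p^-\rceil$.
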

\begin{proof}
We first recall that norms on finite-dimensional spaces are equivalent. Also since each individual summand in Lemma \ref{lem:bdd} is bounded, it follows that
\begin{align*}
\|\nabla \bfu_{m,h}&\|_{L^\infty(\mathcal{S})}  \leq\,c\,   \bigg( \dashint_\mathcal{S}|\nabla \bfu_{m,h}|^{p^\_} \,\dx \bigg)^\frac{1}{p^\_}
\leq\,c\,  \bigg( \dashint_\mathcal{S}\Big( |\nabla \bfu_{m,h}|^{p(t_m,x)} +1\Big)\,\dx \bigg)^\frac{1}{p^\_}
\\
&\leq\,c\,  \bigg( |\mathcal{S}|^{-1} \int_\Omega |\nabla \bfu_{m,h}|^{p(t_m,x)} \,\dx \bigg)^\frac{1}{p^\_} + \,c
\leq\,c\,\left( |\mathcal{S}|^{-1}(\Delta t)^{-1} \right)^\frac{1}{p^\_}
\\
&\leq\,c\,\left( h_\mathcal{S}^{-n}(\Delta t)^{-1} \right)^\frac{1}{p^\_}
\leq\,c\,\left( \Delta t\right)^{-nr-\frac{1}{p^\_}}
\end{align*}
using \eqref{assumpt2}.
We can now take the maximum over all simplexes and the choice of $\ell$ corresponding to the ceiling function of $ nr+\frac{1}{p^\_}$ finishes the proof.
\end{proof}

The following two lemmas will help us to estimate the time discretization error which arises in the elliptic part of the equation.

\begin{lemma}
\label{lem:lemOne}
Let $p\in C^{\alpha_x,\alpha_t}(\overline{Q})$ with $\alpha_x,\alpha_t\in(0,1]$ and $s>1$ be given.
Then for any $\delta>0$ there is $c_{\delta,s}>0$ such that
for all $\sigma,t\in I$ with $|t-\sigma|\leq\Delta t\ll1$ and all $\bfxi,\bfeta\in\R^{N\times n}$
\begin{align*}
\left(\bfS(t,x,\bfxi)-\bfS(\sigma,x,\bfxi)\right):( \bfxi- \bfeta)
&\leq \,
\delta\,  \vert \bfF(t,x, \bfxi)- \bfF(t,x,\bfeta)\vert^2
\\
&+
c_{\delta,s}\,(\Delta t)^{2\alpha_t}\,   \left(1+\left\vert \bfxi\right\vert \right)^{p(t,x)s}
%+
%c_{\delta,s}\,(\Delta t)^{2\alpha_t}\,   \left(1+\left\vert\bfxi\right\vert \right)^{p(t,x)s}
\end{align*}
uniformly in $x\in\Omega$.
\end{lemma}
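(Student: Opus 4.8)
The plan is to start from the structural growth condition \eqref{growth2} applied to the stress pair $\bfS(t,x,\bfxi)$ and $\bfS(\sigma,x,\bfxi)$ (same spatial point, times $t$ and $\sigma$), which gives
\begin{align*}
|\bfS(t,x,\bfxi)-\bfS(\sigma,x,\bfxi)|\leq\,c\,|t-\sigma|^{\alpha_t}\big(1+|\ln(\kappa+|\bfxi|)|\big)\Big((\kappa+|\bfxi|)^{p(t,x)-2}+(\kappa+|\bfxi|)^{p(\sigma,x)-2}\Big)|\bfxi|.
\end{align*}
Since $|t-\sigma|\leq\Delta t\ll1$, H\"older continuity of $p$ in time together with Lemma \ref{lem:pxpy} (or a direct elementary estimate, treating the cases $\kappa+|\bfxi|\leq1$ and $\kappa+|\bfxi|>1$ separately) lets me replace $(\kappa+|\bfxi|)^{p(\sigma,x)-2}$ by $c\,(\kappa+|\bfxi|)^{p(t,x)-2}$, so the right-hand side is controlled by $c\,(\Delta t)^{\alpha_t}(1+|\ln(\kappa+|\bfxi|)|)(\kappa+|\bfxi|)^{p(t,x)-2}|\bfxi|$. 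Pairing this against $|\bfxi-\bfeta|$ and using $|\bfxi|\leq\kappa+|\bfxi|$ gives
\begin{align*}
\left(\bfS(t,x,\bfxi)-\bfS(\sigma,x,\bfxi)\right):(\bfxi-\bfeta)\leq\,c\,(\Delta t)^{\alpha_t}\big(1+|\ln(\kappa+|\bfxi|)|\big)(\kappa+|\bfxi|)^{\frac{p(t,x)-2}{2}}|\bfxi|\cdot(\kappa+|\bfxi|)^{\frac{p(t,x)-2}{2}}|\bfxi-\bfeta|.
\end{align*}

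Next I would apply Young's inequality $ab\leq\delta b^2+c_\delta a^2$ with $b=(\kappa+|\bfxi|)^{\frac{p(t,x)-2}{2}}|\bfxi-\bfeta|$ and $a=c(\Delta t)^{\alpha_t}(1+|\ln(\kappa+|\bfxi|)|)(\kappa+|\bfxi|)^{\frac{p(t,x)-2}{2}}|\bfxi|$. The key point is then to absorb the $b$-term: by the standard equivalence (see the appendix of the paper / Lemma on Young functions) one has $(\kappa+|\bfxi|)^{p(t,x)-2}|\bfxi-\bfeta|^2\leq c\,|\bfF(t,x,\bfxi)-\bfF(t,x,\bfeta)|^2$ whenever $|\bfeta|\lesssim|\bfxi|$, and more robustly one always has a comparison of $|\bfF(t,x,\bfxi)-\bfF(t,x,\bfeta)|^2$ with $(\kappa+|\bfxi|+|\bfeta|)^{p(t,x)-2}|\bfxi-\bfeta|^2$; the case $|\bfeta|\gg|\bfxi|$ is handled by noting that then $|\bfxi-\bfeta|\sim|\bfeta|$ and $(\kappa+|\bfxi|)^{\frac{p(t,x)-2}{2}}|\bfxi-\bfeta|\leq c\,|\bfF(t,x,\bfxi)-\bfF(t,x,\bfeta)|+c(\kappa+|\bfxi|)^{\frac{p(t,x)}{2}}$, the extra term being lower order and absorbable into the final $(1+|\bfxi|)^{p(t,x)s}$. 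This yields the $\delta|\bfF(t,x,\bfxi)-\bfF(t,x,\bfeta)|^2$ term in the claim.

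For the remaining term $c_\delta(\Delta t)^{2\alpha_t}(1+|\ln(\kappa+|\bfxi|)|)^2(\kappa+|\bfxi|)^{p(t,x)-2}|\bfxi|^2\leq c_\delta(\Delta t)^{2\alpha_t}(1+|\ln(\kappa+|\bfxi|)|)^2(\kappa+|\bfxi|)^{p(t,x)}$, I would absorb the logarithmic factor into a small power: for every $\varepsilon>0$ there is $c_\varepsilon$ with $(1+|\ln(\kappa+|\bfxi|)|)^2\leq c_\varepsilon(1+|\bfxi|)^\varepsilon$ (and also $\leq c_\varepsilon$ on the bounded region $\kappa+|\bfxi|\leq1$, using $\kappa\leq1$). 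Choosing $\varepsilon$ so that $p(t,x)+\varepsilon\leq p(t,x)s$ uniformly — possible since $s>1$ and $p^->1$, taking e.g. $\varepsilon=(s-1)p^-$ — and bounding $(\kappa+|\bfxi|)^{p(t,x)}\leq c(1+|\bfxi|)^{p(t,x)}$ since $\kappa\in[0,1]$, I arrive at $c_{\delta,s}(\Delta t)^{2\alpha_t}(1+|\bfxi|)^{p(t,x)s}$, completing the proof. All constants are manifestly uniform in $x\in\Omega$.

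The main obstacle is the $\bfF$-absorption step: converting $(\kappa+|\bfxi|)^{\frac{p(t,x)-2}{2}}|\bfxi-\bfeta|$ into $|\bfF(t,x,\bfxi)-\bfF(t,x,\bfeta)|$ is only clean when $|\bfeta|$ is comparable to $|\bfxi|$, and in the regime $|\bfeta|\gg|\bfxi|$ one must carefully track the lower-order surplus and check it is dominated by $(1+|\bfxi|)^{p(t,x)s}$ — this is where the gap $s>1$ (rather than $s=1$) is genuinely used, both here and in absorbing the logarithm.
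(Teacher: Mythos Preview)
Your approach follows the paper's outline, but the absorption step for the $b$-term has a genuine gap in the subquadratic case. The inequality you invoke in the regime $|\bfeta|\gg|\bfxi|$,
\[
(\kappa+|\bfxi|)^{\frac{p(t,x)-2}{2}}|\bfxi-\bfeta|\leq c\,|\bfF(t,x,\bfxi)-\bfF(t,x,\bfeta)|+c\,(\kappa+|\bfxi|)^{\frac{p(t,x)}{2}},
\]
is simply false when $p(t,x)<2$: take $\kappa=0$, $|\bfeta|=1$ and let $|\bfxi|\to 0$. The left side equals $|\bfxi|^{(p-2)/2}|\bfxi-\bfeta|\to\infty$, while on the right $|\bfF(t,x,\bfxi)-\bfF(t,x,\bfeta)|\sim(\kappa+|\bfxi|+|\bfeta|)^{(p-2)/2}|\bfxi-\bfeta|\sim 1$ and $(\kappa+|\bfxi|)^{p/2}\to 0$. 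So the surplus is not lower order and cannot be pushed into $(1+|\bfxi|)^{p(t,x)s}$; your $\delta b^2$ term would contribute a piece with no $(\Delta t)^{2\alpha_t}$ factor in front of it. The difficulty is intrinsic to the symmetric splitting $(\kappa+|\bfxi|)^{p-2}=(\kappa+|\bfxi|)^{\frac{p-2}{2}}(\kappa+|\bfxi|)^{\frac{p-2}{2}}$, which puts the wrong weight on $|\bfxi-\bfeta|$ when $p<2$.

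The paper avoids this by never splitting the exponent. It recognises $(\kappa+|\bfxi|)^{p(t,x)-2}|\bfxi|=(\varphi^{(t,x)})'(|\bfxi|)$ and applies Young's inequality for the \emph{shifted} N-function $\varphi^{(t,x)}_{|\bfxi|}$ (Lemma~\ref{lem:young}), producing
\[
\delta\,\varphi^{(t,x)}_{|\bfxi|}(|\bfxi-\bfeta|)+c_\delta\,\big(\varphi^{(t,x)}_{|\bfxi|}\big)^*\big(G\,(\Delta t)^{\alpha_t}(\varphi^{(t,x)})'(|\bfxi|)\big).
\]
The point is that $\varphi^{(t,x)}_{|\bfxi|}(|\bfxi-\bfeta|)\sim(\kappa+|\bfxi|+|\bfeta|)^{p-2}|\bfxi-\bfeta|^2\sim|\bfF(t,x,\bfxi)-\bfF(t,x,\bfeta)|^2$ directly by Lemma~\ref{lem:hammer}, with the correct shift $|\bfxi|+|\bfeta|$ built in, so no case distinction on the relative sizes of $\bfxi$ and $\bfeta$ is needed and the estimate is uniform in $p^-\leq p(t,x)\leq p^+$. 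The conjugate term is then reduced to $(\Delta t)^{2\alpha_t}\varphi^{(t,x)}(|\bfxi|)$ times a power of $G$ via Lemmas~\ref{lem:shiftedindex} and~\ref{lem:shifted2}, after which your logarithm-absorption argument finishes the job.
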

\begin{proof}
We start with the estimate
\begin{align}
&\vert\bfS(t,x, \bfxi) - \bfS(\sigma,x,\bfxi)\vert\nonumber\\ &\leq c\,\vert p(t,x)-p(\sigma,x)\vert\vert\ln(\kappa + \vert \bfxi \vert)\vert\Big((\kappa + \vert \bfxi \vert)^{p(t,x)-2} + (\kappa + \vert \bfxi \vert)^{p(\sigma,x)-2}\Big)\vert\bfxi\vert\label{smesht}
\\&\leq 
c\, (\Delta t)^{\alpha_t}\big(1+\vert\ln(\kappa+ \vert \bfxi \vert)\vert\big)\left((\kappa + \vert \bfxi \vert)^{p(t,x)-2} + (\kappa + \vert \bfxi \vert)^{p(\sigma,x)-2}\right)\vert\bfxi\vert\nonumber
%&=c\, (\Delta t)^{\alpha_t}\big(1+\vert\ln(\kappa+ \vert \bfxi \vert)\vert\big)\left(1 + (\kappa + \vert \bfxi \vert)^{p(t,x)-p(\sigma,x)}\right)(\kappa + \vert \bfxi \vert)^{p(t,x)-2}\vert\bfxi\vert\nonumber
\end{align}
which follows from $\alpha_t$-H\"{o}lder continuity of $p$. 
Thus we get
\begin{align*}
\left(\bfS(t,x,\bfxi)-\bfS(\sigma,x,\bfxi)\right):(\bfxi- \bfeta) 
&\leq 
c\,\vert  \bfxi- \bfeta\vert (\Delta t)^{\alpha_t} G(t,\sigma,x,\bfxi) (\kappa +\vert \bfxi \vert)^{p(t,x)-2} \vert \bfxi \vert
%&\leq 
%c\,\vert  \bfxi- \bfeta\vert (\Delta t)^{\alpha_t} \vert(1 + \vert \bfxi \vert)^{3\beta'} (\kappa + 1+\vert \bfxi \vert)^{p(t,x)-2} (1+\vert \bfxi \vert)
=:J,
\end{align*}
where
\begin{align*}
G(t,\sigma,x,\bfxi)&:=\big(1+ \vert \ln(\kappa + \vert \bfxi \vert) \vert\big) \left(1 + (\kappa + \vert \bfxi \vert)^{p(\sigma,x)-p(t,x)}\right).
\end{align*}
Note that the claimed inequality is trivial if $\bfxi=0$, so we can exclude this case.
Now for $\sigma\in I\subset\mathbb{R}$ and $x\in\Omega$ fixed, we consider the Young function
\begin{align}\label{eq:2102}
\varphi^{(\sigma,x)}(\eta) :=\int^\eta_0(\kappa +r)^{p(\sigma,x )-2}r\,\mathrm{d}r,\quad t\geq0,
\end{align}
together with its shifted Young function
\begin{align}\label{eq:2102b}
\varphi^{(\sigma,x)}_a(\eta) :=\int^\eta_0\frac{(\varphi^{(\sigma,x)})'(a+\tau)}{a+\tau} \tau\,\mathrm{d}\tau, \quad \eta\geq0,
\end{align}
where $a\geq0$.
Then using this definition, Remark \ref{A9} and Young's inequality (see  Lemma \ref{lem:young}) we find that
\begin{align*}
J&=
c\,\vert  \bfxi- \bfeta\vert   (\Delta t)^{\alpha_t} G(t,\sigma,x,\bfxi) (\varphi^{(t,x)})'( \vert \bfxi \vert)
\\
&\leq
\,\delta\,  \left(\varphi^{(t,x)}_{\vert \bfxi\vert}\right)(\vert \bfxi- \bfeta\vert)
+
c_\delta\,   \left(\varphi^{(t,x)}_{\vert \bfxi \vert}\right)^*\left(G(t,\sigma,x,\bfxi)(\Delta t)^{\alpha_t} (\varphi^{(t,x)})'( \vert \bfxi\vert)\right)
\end{align*}
for any $\delta>0$.
We then use Lemma \ref{lem:hammer} for the first part and Lemmas \ref{lem:shiftedindex} and \ref{lem:shifted2} for the second one and obtain
\begin{align*}
J &\leq
c\,\delta\, \vert \bfF(t,x, \bfxi)- \bfF(t,x,\bfeta)\vert^2
\\
&+
c_\delta\,  \big(G(t,\sigma,x,\bfxi) \big)^{\max\{2,p'(t,x)\}}\left(\varphi^{(t,x)}_{\vert \bfxi\vert}\right)^*\left( (\Delta t)^{\alpha_t}(\varphi^{(t,x)})'( \vert \bfxi\vert)\right)
\\
&\leq
c\,\delta\, \vert \bfF(t,x, \bfxi)- \bfF(t,x,\bfeta)\vert^2
\\
&+
c_\delta(\Delta t)^{2\alpha_t}\,\big(G(t,\sigma,x,\bfxi)\big)^{\max\{2,p'(t,x)\}} \varphi^{(t,x)}( \vert \bfxi\vert).
\end{align*}
%Finally, we use
%\begin{align*}
% \left(\varphi^{(t,x)}_{\vert \bfxi\vert}\right)^*\left((\varphi^{(t,x)})'( 1+\vert \bfxi\vert)\right)&\leq\,c\Big(1+
% \left(\varphi^{(t,x)}_{\vert \bfxi\vert}\right)^*\left((\varphi^{(t,x)})'(\vert \bfxi\vert)\right)\Big)\\
%&\leq\,c\big(1+
% \varphi^{(t,x)})(\vert \bfxi\vert\big)\leq\,c(1+|\bfxi|)^{p(t,x)}
%\end{align*}
%which is again a consequence of Lemma \ref{lem:shifted2}.
%and $a^{(p-1)(p'-2)}=a^{(p-1)(\frac{p}{p-1}-2)}=a^{(p-1)(\frac{p}{p-1}-2)}$
%\begin{align*}
%\ln(1+|\bfxi|)\leq\,c_\beta(1+|\bfxi|^{\beta})
%\end{align*}
%for any $\beta>0$
Now we claim that
\begin{align}\label{eq:2610}
\big(G(t,\sigma,x,\bfxi)\big)^{\max\{2,p'(t,x)\}} \varphi^{(t,x)}( \vert \bfxi\vert)\leq\,
c\,\left(1+ |\bfxi|\right)^{p(t,x)+2\beta'\max\{2,p'(t,x)\}}  
\end{align}
which finishes the proof. In order to show \eqref{eq:2610} we first note that the left-hand side 
 stays bounded for $|\bfxi|\leq 1$, at least if $\vert t-\sigma \vert \ll 1$. So, we can assume that $|\bfxi|$ is large. In this case we
estimate $\vert \ln(\kappa + \vert \bfxi \vert)\vert\leq \,c\,|\bfxi|^{\beta'}$ for $\beta'>0$ as well as
\begin{align*}
 (\kappa + \vert \bfxi \vert)^{p(\sigma,x)-p(t,x)}
  \leq
  c(1 + \vert \bfxi \vert )^{c(\Delta t)^ {\alpha_t}} 
    \leq c\,
  (1 + \vert \bfxi \vert )^{\beta'} 
\end{align*}
for a small enough choice of $\Delta t$ using the $\alpha_t$-H\"older continuity of $p$. Both together implies \eqref{eq:2610}.
%\begin{align*}
%J
%&\leq
%c\,\delta\, \vert \bfF(t,x, \bfxi)- \bfF(t,x,\bfeta)\vert^2+
%c_{\delta,s}(\Delta t)^{2\alpha_t}\,  \left(1+ |\bfxi|\right)^{p(t,x)+2\beta'\max\{2,p'(t,x)\}} .
%\end{align*}
%Choosing $\beta'$ small enough and renaming $\delta$ finishes the proof.
%\begin{remark}
%Note that $\ln x\leq  x^s$ for all $x\geq1, \, s\geq1$.
%\end{remark}
\end{proof}

\begin{lemma}
\label{lemma:two}
Let $p\in \mathcal P(\mathbb R^{n+1})$ with $p_->1$ and $p_+<\infty$.
Then for any $\delta>0$ there is $c_{\delta}>0$ such that
for all $\sigma,t\in I$ with $|t-\sigma|\leq\Delta t\ll1$ and all $\bfxi,\bfeta,\bfzeta\in\R^{N\times n}$ with $|\bfxi|\leq \,C(\Delta t)^{-\ell}$ for some $\ell\in\mathbb N$, we have
\begin{align*}
\left(\bfS(\sigma,x,\bfeta)-\bfS(\sigma,x,\bfzeta)\right):( \bfeta- \bfxi)
&\leq \,
c(\delta,\ell)\Big(|\bfF(\sigma,x,\bfeta)-\bfF(t,x,\bfeta)|^2+ |\bfF(\sigma,x,\bfeta)-\bfF(\sigma,x,\bfzeta)|^2\Big)\\&+\,\delta |\bfF(t,x,\bfeta)-\bfF(t,x,\bfxi)|^2+\delta (\Delta t)^\ell
%+
%c_{\delta,s}\,(\Delta t)^{2\alpha_t}\,   \left(1+\left\vert\bfxi\right\vert \right)^{p(t,x)s}
\end{align*}
uniformly in $x\in\Omega$.
\end{lemma}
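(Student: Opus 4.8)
The statement compares two dissipation-type quantities at nearby times $\sigma,t$ and wants to absorb the ``wrong'' frozen exponents into good terms plus an error of order $(\Delta t)^\ell$. The natural starting point is the elementary monotonicity/coercivity structure: by \eqref{growth1} together with the standard equivalence relating $\bfS$-differences and $\bfF$-differences (Lemma \ref{lem:hammer} and the shift machinery in the appendix), one has a bound of the form
\begin{align*}
\left(\bfS(\sigma,x,\bfeta)-\bfS(\sigma,x,\bfzeta)\right):( \bfeta- \bfxi)
&\leq c\,|\bfF(\sigma,x,\bfeta)-\bfF(\sigma,x,\bfzeta)|\,|\bfF(\sigma,x,\bfeta)-\bfF(\sigma,x,\bfxi)|.
\end{align*}
So the first step is to reduce everything to controlling $|\bfF(\sigma,x,\bfeta)-\bfF(\sigma,x,\bfxi)|$, i.e.\ to replacing the frozen time $\sigma$ by $t$ in the last factor, since the factor $|\bfF(\sigma,x,\bfeta)-\bfF(\sigma,x,\bfzeta)|$ is already one of the allowed good terms (after a Young split it produces $c(\delta)|\bfF(\sigma,x,\bfeta)-\bfF(\sigma,x,\bfzeta)|^2$).

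Second, I would insert $\bfF(t,x,\bfeta)$ and $\bfF(t,x,\bfxi)$ by the triangle inequality:
\begin{align*}
|\bfF(\sigma,x,\bfeta)-\bfF(\sigma,x,\bfxi)|&\leq |\bfF(\sigma,x,\bfeta)-\bfF(t,x,\bfeta)| + |\bfF(t,x,\bfeta)-\bfF(t,x,\bfxi)| + |\bfF(t,x,\bfxi)-\bfF(\sigma,x,\bfxi)|.
\end{align*}
The first term on the right is one of the permitted good quantities. The middle term, after a Young's inequality applied to the product with $|\bfF(\sigma,x,\bfeta)-\bfF(\sigma,x,\bfzeta)|$, produces $\delta|\bfF(t,x,\bfeta)-\bfF(t,x,\bfxi)|^2$, which is exactly the third admissible term. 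So the entire burden of the proof is the last term $|\bfF(t,x,\bfxi)-\bfF(\sigma,x,\bfxi)|$: a pointwise estimate on how much $\bfF$ moves when only the (frozen) time in the exponent changes, at a fixed matrix $\bfxi$.

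Third, and this is where the hypothesis $|\bfxi|\leq C(\Delta t)^{-\ell}$ is used, I estimate this last term directly. By the same computation that gave \eqref{smesht} (using the $\alpha_t$-H\"older continuity of $p$ applied to $\bfF$ rather than $\bfS$, which is the $\frac{p-2}{2}$ analogue), one gets
\begin{align*}
|\bfF(t,x,\bfxi)-\bfF(\sigma,x,\bfxi)|\leq c\,(\Delta t)^{\alpha_t}\big(1+|\ln(\kappa+|\bfxi|)|\big)\big((\kappa+|\bfxi|)^{\frac{p(t,x)-2}{2}}+(\kappa+|\bfxi|)^{\frac{p(\sigma,x)-2}{2}}\big)|\bfxi|.
\end{align*}
Now since $|\bfxi|\leq C(\Delta t)^{-\ell}$, the logarithm is at most $c\,\ell\,|\ln\Delta t|$ and the powers of $(\kappa+|\bfxi|)$ are polynomially bounded in $(\Delta t)^{-1}$; absorbing the $(\Delta t)^{\alpha_t}$ against these polynomial-in-$(\Delta t)^{-1}$ factors (which only costs finitely many powers of $\Delta t$, all of them positive since $\alpha_t>0$ and we are free to increase $\ell$), one concludes
$$|\bfF(t,x,\bfxi)-\bfF(\sigma,x,\bfxi)|^2\leq c(\ell)\,(\Delta t)^{\ell}$$
after relabeling $\ell$. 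This is the term $\delta(\Delta t)^\ell$ in the claim (with $\delta$ absorbable into $c$, or kept explicit by yet another Young split). Collecting the three contributions finishes the proof.

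**Main obstacle.** The one genuinely delicate point is bookkeeping the exponents in the last step: one must be sure that replacing $(\Delta t)^{\alpha_t}$ times a factor that is polynomially large in $(\Delta t)^{-1}$ (coming from $(\kappa+|\bfxi|)^{\frac{p-2}{2}}|\bfxi|$ with $|\bfxi|\lesssim(\Delta t)^{-\ell}$, plus the logarithmic factor) still yields a positive power of $\Delta t$, and that this power can be made as large as the (arbitrary but fixed) $\ell$ in the statement by enlarging the exponent bound on $|\bfxi|$ — which is legitimate precisely because $\ell$ in the hypothesis and $\ell$ in the conclusion are both just ``some'' natural numbers. Everything else is the standard Young-function / shift-function toolkit already assembled in the appendix (Lemmas \ref{lem:young}, \ref{lem:hammer}, \ref{lem:shiftedindex}, \ref{lem:shifted2} and Remark \ref{A9}).
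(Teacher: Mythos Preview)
Your plan contains a genuine gap at the third step, and a minor (fixable) issue at the first.

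\medskip
\textbf{Minor issue.} The product bound
\[
\big(\bfS(\sigma,x,\bfeta)-\bfS(\sigma,x,\bfzeta)\big):(\bfeta-\bfxi)
\leq c\,|\bfF(\sigma,x,\bfeta)-\bfF(\sigma,x,\bfzeta)|\,|\bfF(\sigma,x,\bfeta)-\bfF(\sigma,x,\bfxi)|
\]
is false in general. For $\kappa=0$, $p\equiv3$, $\bfeta=0$, $\bfzeta=-R$, $\bfxi=-\epsilon$ (scalars), the left side is $R^{2}\epsilon$ and the right is $cR^{3/2}\epsilon^{3/2}$; the ratio blows up as $R/\epsilon\to\infty$. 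What \emph{is} available is the Young inequality for the shifted function $\varphi^{(\sigma,x)}_{|\bfeta|}$ (Lemma~\ref{lem:young}, Lemma~\ref{lem:hammer}), which yields
\[
I\leq \delta\,|\bfF(\sigma,x,\bfeta)-\bfF(\sigma,x,\bfxi)|^2
+c_\delta\,|\bfF(\sigma,x,\bfeta)-\bfF(\sigma,x,\bfzeta)|^2.
\]
This is exactly the intermediate step the paper reaches, so this slip is harmless.

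\medskip
\textbf{Main gap.} From here you propose the triangle split
\[
|\bfF(\sigma,x,\bfeta)-\bfF(\sigma,x,\bfxi)|
\leq |\bfF(\sigma,x,\bfeta)-\bfF(t,x,\bfeta)|+|\bfF(t,x,\bfeta)-\bfF(t,x,\bfxi)|+|\bfF(t,x,\bfxi)-\bfF(\sigma,x,\bfxi)|,
\]
and claim the last term squared is $\leq c(\ell)(\Delta t)^\ell$. This is not true. By the $\bfF$-analogue of \eqref{smesht} one has
\[
|\bfF(t,x,\bfxi)-\bfF(\sigma,x,\bfxi)|^2
\leq c\,(\Delta t)^{2\alpha_t}\big(1+|\ln(\kappa+|\bfxi|)|\big)^2(\kappa+|\bfxi|)^{p(t,x)\vee p(\sigma,x)},
\]
and when $|\bfxi|$ is near its allowed maximum $C(\Delta t)^{-\ell}$ this is of order $(\Delta t)^{2\alpha_t-\ell p^{+}}$ (times a harmless logarithm). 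Since $\ell\geq1$, $p^{+}>1$ and $\alpha_t\leq1$, the exponent $2\alpha_t-\ell p^{+}$ is negative, so the term \emph{diverges} as $\Delta t\to0$. ``Increasing $\ell$'' makes this worse, not better: a larger $\ell$ weakens the hypothesis on $|\bfxi|$ and enlarges the bad power. The single factor $(\Delta t)^{\alpha_t}$ from H\"older continuity of $p$ cannot compensate a polynomial blow-up in $|\bfxi|$.

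\medskip
\textbf{What the paper does instead.} The paper never introduces the uncontrollable term $|\bfF(t,x,\bfxi)-\bfF(\sigma,x,\bfxi)|$. After the Young step it works directly with
\[
(\kappa+|\bfeta|+|\bfeta-\bfxi|)^{p(\sigma,x)-2}|\bfeta-\bfxi|^2
\]
and performs a case distinction on the size of $|\bfeta|$. If $|\bfeta|\leq 2C(\Delta t)^{-\ell}$ (and we are outside the degenerate regime $\kappa+|\bfeta|+|\bfeta-\bfxi|\leq 6C(\Delta t)^{\ell}$, which is the sole source of the $(\Delta t)^\ell$ error), then the base satisfies the two-sided polynomial bound $(\Delta t)^{\ell'}\lesssim \kappa+|\bfeta|+|\bfeta-\bfxi|\lesssim(\Delta t)^{-\ell'}$, and Lemma~\ref{lem:pxpy} allows one to swap $p(\sigma,x)\to p(t,x)$ at the cost of a constant, yielding $c(\ell)|\bfF(t,x,\bfeta)-\bfF(t,x,\bfxi)|^2$ directly. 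If instead $|\bfeta|>2C(\Delta t)^{-\ell}$, then $|\bfxi|<|\bfeta|$ forces $|\bfeta-\bfxi|\sim|\bfeta|$, the whole expression is $\sim|\bfF(\sigma,x,\bfeta)|^2$, and one splits this as $|\bfF(\sigma,x,\bfeta)-\bfF(t,x,\bfeta)|^2+|\bfF(t,x,\bfeta)-\bfF(t,x,\bfxi)|^2$. The essential ingredient you are missing is the log-H\"older ``key estimate'' Lemma~\ref{lem:pxpy}, and the case analysis on $|\bfeta|$ (not on $|\bfxi|$) that puts you in a position to invoke it.
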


\begin{proof}
%We start by proving the elementary inequality
%\begin{align}\label{eq:2809}
%\big(\varphi_a^{t}\big)^\ast\Big(\big(\varphi_a^{\sigma}\big)'(s)\Big) \leq\,c\Big(\varphi_a^{t}(s)+\varphi^{\sigma}_a(s)\Big)
%\end{align}
%which is uniform in $s,a\geq0$ and $x\in\Omega$. We have
%\begin{align*}
%(\varphi^{t}_a)^\ast\Big((\varphi^{\sigma}_a)'(s)\Big)&\sim
%\Big((\kappa+a)^{p(t)-1}
%+
%(\varphi^{\sigma}_a)'(s)\Big)^{p'(t)-2} \Big((\varphi^{\sigma}_a)'(s)\Big)^2\\
%&\sim\Big((\kappa+a)^{p(t)-1}+(\kappa+a+s)^{p(\sigma)-2}s\Big)^{p'(t)-2} \Big((\kappa+a+s)^{p(\sigma)-2}s\Big)^2.
% \end{align*}
%If $s\geq \kappa+a$ we have
%\begin{align*}
%(\varphi^{t}_a)^\ast\Big((\varphi^{\sigma}_a)'(s)\Big)&\sim\Big(s^{(p(t)-1)(p'(t)-2)}+s^{(p(\sigma)-1)(p'(t)-2)}\Big) s^{2p(\sigma)-2}\\
%&\sim s^{p(t)}+s^{p(\sigma)}
%\sim \varphi_a^{t}(s)+\varphi_a^{\sigma}(s).
%\end{align*}
%If $s\leq \kappa+a$ we have by Lemma \ref{lem:shift_sim}
%\begin{align*}
%(\varphi^{t}_a)^\ast\Big((\varphi^{\sigma}_a)'(s)\Big)&\sim\Big((\kappa+a)^{p(t)-1)(p'(t)-2)}+(\kappa+a)^{(p(\sigma)-1)(p'(t)-2)}\Big) (\kappa+a)^{2p(\sigma)-4}s^2\\
%&\sim (\kappa+a)^{p(t)-2}s^2+(\kappa+a)^{p(\sigma)-2}s^2\sim \varphi_a^{t}(s)+\varphi_a^{\sigma}(s)
%\end{align*}
%which finishes the proof of \eqref{eq:2809}.
%Now we turn to the proof of the claimed inequality and
We start with the estimate
\begin{align*}
I:=\left(\bfS(\sigma,x,\bfeta)-\bfS(\sigma,x,\bfzeta)\right):( \bfeta- \bfxi)\leq\,c \big(\varphi^{(\sigma,x)}_{|\bfeta|}\big)'\big(|\bfeta-\bfzeta|\big)|\bfeta-\bfxi|
\end{align*}
which is a consequence of Lemma \ref{lem:hammer}.
Now we apply Young's inequality, cf. Lemma \ref{lem:young}, for the Young function $\varphi^{(\sigma,x)}_{|\bfeta|}$ such that
\begin{align}
\label{eq:youngg}
I&\leq\,\delta \varphi^{(\sigma,x)}_{|\bfeta|}\big(|\bfeta-\bfxi|\big)
+c(\delta) \big(\varphi^{(\sigma,x)}_{|\bfeta|}\big)^\ast\Big(\big(\varphi^{(\sigma,x)}_{|\bfeta|}\big)'\big(|\bfeta-\bfzeta|\big)\Big),
\end{align}
where $\delta>0$ is arbitrary. We get by Lemmas \ref{lem:hammer} and \ref{lem:shifted2}
\begin{align*}
I&\leq\,c\,\delta (\kappa+|\bfeta|+|\bfxi-\bfeta|)^{p(\sigma,x)-2}|\bfeta-\bfxi|^2
+
c(\delta)\ \varphi^{(\sigma,x)}_{|\bfeta|}\big(|\bfeta-\bfzeta|\big)\\
&\leq\,c\,\delta (\kappa+|\bfeta|+|\bfxi-\bfeta|)^{p(\sigma,x)-2}|\bfeta-\bfxi|^2
+
c(\delta)\ |\bfF(\sigma,x,\bfeta)-\bfF(\sigma,x,\bfzeta)|^2.
%+
%c(\delta) \varphi^{\sigma}_{|\bfeta|}\big(|\bfeta-\bfzeta|\big)\\
%&\leq\,c\,\delta |\bfF(t,\cdot,\bfeta)-\bfF(t,\cdot,\bfxi)|^2+c(\delta) |\bfF(t,\cdot,\bfeta)-\bfF(t,\cdot,\bfzeta)|^2+c(\delta) |\bfF(\sigma,\cdot,\bfeta)-\bfF(\sigma,\cdot,\bfzeta)|^2
\end{align*}
In order to estimate the first term on the right-hand side we distinguish several cases. First, 
we assume that $\kappa+|\bfeta|+|\bfxi-\bfeta|\leq 6C(\Delta t)^{\ell}$ which is only of interest if $\kappa=0$. Under this assumption we have
\begin{align*}
(\kappa+|\bfeta|+|\bfxi-\bfeta|)^{p(\sigma,x)-2}|\bfeta-\bfxi|^2
&\leq (\kappa+|\bfeta|+|\bfxi-\bfeta|)^{p(\sigma,x)}\leq\,c\,(\Delta t)^{\ell p(\sigma,x)}\leq \,c\,(\Delta t)^\ell.
\end{align*}
So, from now on we work under the assumption $\kappa+|\bfeta|+|\bfxi-\bfeta|\geq 6C(\Delta t)^{\ell}$.
Suppose that $|\bfeta|\leq 2C(\Delta t)^{-\ell}$. In this case we have $\kappa+|\bfeta|+|\bfxi-\bfeta|\leq 6C(\Delta t)^{-\ell}$ (since by assumption $|\bfxi| \leq C(\Delta t)^{-\ell}$) and Lemma \ref{lem:pxpy} applies. 
This means that up to a constant, we can replace $p(\sigma,x)$ with $p(t,x)$ and obtain
 \begin{align*}
(\kappa&+|\bfeta|+|\bfxi-\bfeta|)^{p(\sigma,x)-2}|\bfeta-\bfxi|^2
\leq\, c(\ell)\,(\kappa+|\bfeta|+|\bfxi-\bfeta|)^{p(t,x)-2}|\bfeta-\bfxi|^2
\\
&\leq\,c(\ell)\, \Big( \varphi^{(t,x)}_{\vert\bfeta \vert} \Big)' \big(|\bfxi-\bfeta| \big)\,
|\bfeta-\bfxi|
\leq\,c(\ell)\,|\bfF(t,x,\bfeta)-\bfF(t,x,\bfxi)|^2,
\end{align*}
where we have applied Lemma \ref{lem:hammer} in the last step. \\
Now we assume that  $|\bfeta|> 2C(\Delta t)^{-\ell}$. Since by assumption $|\bfxi| \leq C(\Delta t)^{-\ell}$, we have that $|\bfxi| <|\bfeta| $. Furthermore, there is  $ c=c(\kappa)$ such that $\kappa+|\bfeta| \leq c|\bfeta|$ and so
\begin{align*}
(\kappa+|\bfeta|&+|\bfxi-\bfeta|)^{p(\sigma,x)-2}|\bfeta-\bfxi|^2\leq\, c\,|\bfeta|^{p(\sigma,x)}.
\end{align*}
On the other hand, using $|\bfeta|> 2C(\Delta t)^{-\ell}\gg1$,
\begin{align*}
 |\bfeta|^{p(\sigma,x) -2}\leq\,c\,(\kappa + |\bfeta|)^{p(\sigma,x) -2} \Rightarrow |\bfeta|^{p(\sigma,x)}\leq \,c\,(\kappa + |\bfeta|)^{p(\sigma,x) -2} |\bfeta|^2 =c|\bfF(\sigma,x,\bfeta)|^2
\end{align*}
and as such
\begin{align*}
(\kappa+|\bfeta|&+|\bfxi-\bfeta|)^{p(\sigma,x)-2}|\bfeta-\bfxi|^2\leq\, c\,|\bfeta|^{p(\sigma,x)}\leq\,c\,|\bfF(\sigma,x,\bfeta)|^2\\
&\leq\,c\,|\bfF(\sigma,x,\bfeta)-\bfF(t,x,\bfeta)|^2+c\,|\bfF(t,x,\bfeta)|^2.
\end{align*}
Here we have
\begin{align*}
|\bfF(t,x,\bfeta)|^2&\leq\,c\,|\bfeta|^{p(t,x)}\leq\,c\,(\kappa+|\bfeta|+|\bfxi-\bfeta|)^{p(t,x)-2}|\bfeta-\bfxi|^2\\
&\leq\,c\,|\bfF(t,x,\bfeta)-\bfF(t,x,\bfxi)|^2
\end{align*}
by yet another application of Lemma \ref{lem:hammer} in the last inequality above.
By combining the estimates above and replacing $c\delta$ by $\delta$ gives the claim.
\end{proof}
\begin{remark}
In general, the constants in $\sim$ above depends on $p$ so that when one performs the same calculation for $p(t)\mapsto p(t,x)$, one does not get a uniform constant in $x\in\Omega$.
However, this problem is remedied by replacing  these dependence on $p$ by $p^{\pm}$ whichever is appropriate.
\end{remark}

We define the error $\bfe_m:=\bfu(t_m)-\bfu_{m,h}$ and obtain the following estimate.
\begin{lemma}
\label{lem:lemTwo}Let $p\in C^{\alpha_x,\alpha_t}(\overline{Q})$ with $\alpha_x,\alpha_t\in(0,1]$.
There exist a constant $c$ independent of $\Delta t$ and $ \mathrm{h}$ and $s>1$(close to $1$ for $ \mathrm{h}$ small) such that
\begin{align*}
&\Vert \bfe_m\Vert_2^2  +\Delta t\,\Vert\bfF(t_m,\cdot,\nabla\bfu(t_m))-\bfF(t_m,\cdot,\nabla\bfu_{m,h})\Vert_2^2\nonumber\\
&\leq c\,\Bigg[\int_{I_m}\Big(\Vert\bfF(\sigma,\cdot,\nabla\bfu(t_m))-\bfF(\sigma,\cdot,\nabla\bfu(\sigma))\Vert_2^2+ \,\Vert \bfF(t_m,\cdot,\nabla\bfu(t_m))-\bfF(\sigma,\cdot,\nabla\bfu(t_m))\Vert_2^2 \Big)\ds\\
&+  \Delta t\,\Vert \bfF(t_m,\cdot,\nabla\bfu(t_m))-\bfF(t_m,\cdot,\nabla\bfw_m)\Vert_2^2+\,\int_{I_m}\Vert\bfF(\sigma,\cdot,\nabla\bfu(t_m))-\bfF(\sigma,\cdot,\nabla\bfw_m)\Vert_2^2\ds\nonumber
\\&
+
\int_{I_m}\Vert \bfF(t_m,\cdot\nabla\bfu(t_m))-\bfF(\sigma,\cdot,\nabla\bfu(t_m))\Vert^2\ds
+\Vert \bfu(t_m)-\bfw_m \Vert_2^2+\Vert \bfe_{m-1}\Vert_2^2  
% + \int_{I_m}\,\Vert \bfF(t_m,\cdot,\nabla\bfu_{m,h})-\bfF(\sigma,\cdot,\nabla\bfu_{m,h})\Vert_2^2\ds\nonumber 
\\
&+(\Delta t)^{2\alpha_t+1}\int_\Omega(1+\vert \nabla \bfu(t_m)\vert)^{p(t_m,\cdot)s}\dxs\Bigg]%\label{eq:lemTwo}
\nonumber
\end{align*}
for every $\bfw_m\in V_h$ and all $m=1,...,M$.
\end{lemma}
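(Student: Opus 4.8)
The plan is to test both the discrete scheme \eqref{tdiscr} and the continuous identity \eqref{eq:heatweak1'} (taken between $s=t_{m-1}$ and $t=t_m$) with the same discrete function and to subtract. Fix $\bfw_m\in V_h$ and split $\bfe_m=\bfxi_m+\bfphi_m$ with $\bfxi_m:=\bfu(t_m)-\bfw_m$ and $\bfphi_m:=\bfw_m-\bfu_{m,h}\in V_h$; since $\bfphi_m$ is Lipschitz with zero boundary values it is an admissible test function in \eqref{eq:heatweak1'}. As $\bff_m=(\Delta t)^{-1}\int_{I_m}\bff\ds$ and $\bff=\Div\bfG$, the force terms cancel, leaving
\[
\int_\Omega\bfe_m\cdot\bfphi_m\dx-\int_\Omega\bfe_{m-1}\cdot\bfphi_m\dx=-\int_{I_m}\int_\Omega\big(\bfS(\sigma,\cdot,\nabla\bfu(\sigma))-\bfS(t_m,\cdot,\nabla\bfu_{m,h})\big):\nabla\bfphi_m\dxs .
\]
On the left I would insert $\bfphi_m=\bfe_m-\bfxi_m$ and bound $\int_\Omega\bfe_m\cdot\bfxi_m$, $\int_\Omega\bfe_{m-1}\cdot\bfe_m$ and $\int_\Omega\bfe_{m-1}\cdot\bfxi_m$ by Young's inequality, keeping $\tfrac12\Vert\bfe_m\Vert_2^2$ on the left and producing $\Vert\bfu(t_m)-\bfw_m\Vert_2^2$ and $\Vert\bfe_{m-1}\Vert_2^2$ on the right.

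For the elliptic term I would write $\nabla\bfphi_m=(\nabla\bfu(t_m)-\nabla\bfu_{m,h})-\nabla\bfxi_m$ and split the integrand by inserting $\bfS(t_m,\cdot,\nabla\bfu(t_m))$, obtaining four contributions. The principal one, $\int_{I_m}\int_\Omega\big(\bfS(t_m,\cdot,\nabla\bfu(t_m))-\bfS(t_m,\cdot,\nabla\bfu_{m,h})\big):(\nabla\bfu(t_m)-\nabla\bfu_{m,h})$, is bounded below by $c\,\Delta t\,\Vert\bfF(t_m,\cdot,\nabla\bfu(t_m))-\bfF(t_m,\cdot,\nabla\bfu_{m,h})\Vert_2^2$ via \eqref{growth1} and Lemma \ref{lem:hammer}, so with its sign it supplies the second term on the left-hand side of the claim. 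The contribution $\int_{I_m}\int_\Omega\big(\bfS(t_m,\cdot,\nabla\bfu(t_m))-\bfS(t_m,\cdot,\nabla\bfu_{m,h})\big):\nabla\bfxi_m$ is treated by Young's inequality for the shifted $N$-function $\varphi^{(t_m,\cdot)}_{\vert\nabla\bfu(t_m)\vert}$ together with Lemmas \ref{lem:young}, \ref{lem:hammer} and \ref{lem:shifted2}, giving $\delta\,\Delta t\,\Vert\bfF(t_m,\cdot,\nabla\bfu(t_m))-\bfF(t_m,\cdot,\nabla\bfu_{m,h})\Vert_2^2$ (absorbed once $\delta$ is small) plus $c_\delta\,\Delta t\,\Vert\bfF(t_m,\cdot,\nabla\bfu(t_m))-\bfF(t_m,\cdot,\nabla\bfw_m)\Vert_2^2$.

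The remaining two contributions carry $\bfS(\sigma,\cdot,\nabla\bfu(\sigma))-\bfS(t_m,\cdot,\nabla\bfu(t_m))$, which I would split once more by inserting $\bfS(\sigma,\cdot,\nabla\bfu(t_m))$ into a frozen-time part $\bfS(\sigma,\cdot,\nabla\bfu(\sigma))-\bfS(\sigma,\cdot,\nabla\bfu(t_m))$ and a pure-time part $\bfS(\sigma,\cdot,\nabla\bfu(t_m))-\bfS(t_m,\cdot,\nabla\bfu(t_m))$. Paired with $\nabla\bfu(t_m)-\nabla\bfu_{m,h}$, the frozen-time part, after reordering, equals $\big(\bfS(\sigma,\cdot,\nabla\bfu(t_m))-\bfS(\sigma,\cdot,\nabla\bfu(\sigma))\big):(\nabla\bfu(t_m)-\nabla\bfu_{m,h})$, to which Lemma \ref{lemma:two} applies with $\bfeta=\nabla\bfu(t_m)$, $\bfzeta=\nabla\bfu(\sigma)$, $\bfxi=\nabla\bfu_{m,h}$ and $t=t_m$ — its hypothesis $\vert\nabla\bfu_{m,h}\vert\le C(\Delta t)^{-\ell}$ being exactly Corollary \ref{cor:bddSimpx} — producing (after integrating over $I_m\times\Omega$) the terms $\int_{I_m}\Vert\bfF(t_m,\cdot,\nabla\bfu(t_m))-\bfF(\sigma,\cdot,\nabla\bfu(t_m))\Vert_2^2\ds$ and $\int_{I_m}\Vert\bfF(\sigma,\cdot,\nabla\bfu(t_m))-\bfF(\sigma,\cdot,\nabla\bfu(\sigma))\Vert_2^2\ds$, a $\delta$-multiple of the good term, and a remainder $\le c\,\delta\,(\Delta t)^{\ell}$ dominated by the last term of the claim since $\ell\ge 1+2\alpha_t$ (because $\ell=\lceil nr+1/p^-\rceil$ with $r\ge\frac{1+2\alpha_t}{2\alpha_x}$, $n\ge 2$, $\alpha_x\le1$) and $\Delta t\le1$. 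The pure-time part, paired with $\nabla\bfu(t_m)-\nabla\bfu_{m,h}$, is precisely the left-hand side of Lemma \ref{lem:lemOne} with $\bfxi=\nabla\bfu(t_m)$, $\bfeta=\nabla\bfu_{m,h}$, and yields a further $\delta$-multiple of the good term plus $c_{\delta,s}(\Delta t)^{2\alpha_t+1}\int_\Omega(1+\vert\nabla\bfu(t_m)\vert)^{p(t_m,\cdot)s}\dx$ for some $s>1$ (close to $1$ once $\Delta t$, hence $\mathrm h$, is small). Paired with $-\nabla\bfxi_m$, the difference $\bfS(\sigma,\cdot,\nabla\bfu(\sigma))-\bfS(t_m,\cdot,\nabla\bfu(t_m))$ is controlled, via Young's inequality for the shifted $N$-functions $\varphi^{(\sigma,\cdot)}_{\vert\nabla\bfu(t_m)\vert}$, the change-of-shift lemmas and \eqref{growth2} (arguing as in the proof of Lemma \ref{lem:lemOne}), by the best-approximation term $\int_{I_m}\Vert\bfF(\sigma,\cdot,\nabla\bfu(t_m))-\bfF(\sigma,\cdot,\nabla\bfw_m)\Vert_2^2\ds$, further copies of the pure-time and frozen-time error terms, and a remainder again absorbed into the last term. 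Collecting everything, moving $\tfrac12\Vert\bfe_m\Vert_2^2$ and the surviving part of $c\,\Delta t\,\Vert\bfF(t_m,\cdot,\nabla\bfu(t_m))-\bfF(t_m,\cdot,\nabla\bfu_{m,h})\Vert_2^2$ to the left, and fixing $\delta$ small, gives the claimed estimate.

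The hard part is the systematic bookkeeping with the shifted $N$-functions $\varphi^{(t,\cdot)}_a$, their conjugates and the index/shift lemmas (Lemmas \ref{lem:shiftedindex}, \ref{lem:shifted2}), arranged so that every term produced by Young's inequality matches exactly one of the quasi-norm terms on the right-hand side, and — as in the remark following Lemma \ref{lemma:two} — keeping all constants uniform in $x\in\Omega$ by replacing any dependence on the exponent $p(\cdot,\cdot)$ by dependence on $p^\pm$. Conceptually, the decisive observation is that after the two-fold insertion of intermediate values the genuinely mixed contributions reduce to exactly the two model situations already isolated in Lemma \ref{lemma:two} (frozen-time argument error, using the a priori bound on $\nabla\bfu_{m,h}$ from Corollary \ref{cor:bddSimpx}) and in Lemma \ref{lem:lemOne} (pure time error of $\bfS$), all other terms being routine Young-inequality estimates against the shifted $N$-functions.
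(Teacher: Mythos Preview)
Your approach is essentially the same as the paper's: the paper also subtracts the equations, tests with $\bfw_m-\bfu_{m,h}$, inserts $\bfS(t_m,\cdot,\nabla\bfu(t_m))$ and $\bfS(\sigma,\cdot,\nabla\bfu(t_m))$, and handles the resulting six terms $I_1,\dots,I_6$ via Young's inequality and Lemma~\ref{lem:young2} ($I_1$--$I_3$), Lemma~\ref{lem:lemOne} ($I_4,I_6$), and Lemma~\ref{lemma:two} together with Corollary~\ref{cor:bddSimpx} ($I_5$), exactly as you outline. The one point worth flagging is that the paper groups the $L^2$ terms via the polarization identity $\bfa\cdot(\bfa-\bfb)=\tfrac12\big(|\bfa|^2-|\bfb|^2+|\bfa-\bfb|^2\big)$, so that the $\tfrac12\|\bfe_m-\bfe_{m-1}\|_2^2$ appearing on the left exactly absorbs the Young-inequality loss from $I_1=\int_\Omega(\bfe_m-\bfe_{m-1})\cdot(\bfu(t_m)-\bfw_m)\dx$ and leaves the constant in front of $\|\bfe_{m-1}\|_2^2$ equal to~$1$; your separate Young bounds on $\int\bfe_{m-1}\cdot\bfe_m$, $\int\bfe_m\cdot\bfxi_m$, $\int\bfe_{m-1}\cdot\bfxi_m$ give a strictly larger constant there, which still proves the lemma as literally stated but would spoil the telescoping needed in the iteration step of Theorem~\ref{thm:4}.
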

\begin{proof} Subtracting \eqref{tdiscr} from the weak formulation \eqref{eq:heatweak1} we obtain
\begin{align*}%\label{tdiscr'}
%\begin{aligned}
\int_\Omega&\bfe_m\cdot\bfvarphi \dx +\int_{I_m}\int_\Omega\Big(\bfS(\sigma,\cdot,\nabla\bfu(\sigma))-\bfS(t_m,\cdot,\nabla\bfu_{m,h})\Big):\nabla\bfphi\dxs=\int_\Omega\bfe_{m-1}\cdot\bfvarphi \dx
%\int_{t_{m-1}}^{t_m}\int_\Omega\left(\bff(\sigma)-\bff_m\right)\cdot \bfphi\dxs
%\end{aligned}
\end{align*}
for every $\bfphi\in V_h$ or equivalently
\begin{align}\label{tdiscrb}
\begin{aligned}
\int_\Omega&\bfe_m\cdot\bfvarphi \dx +\Delta t\int_\Omega\Big(\bfS(t_m,\cdot,\nabla\bfu(t_m))-\bfS(t_m,\cdot,\nabla\bfu_{m,h})\Big):\nabla\bfphi\dx
\\&=\int_\Omega\bfe_{m-1}\cdot\bfvarphi \dx
\dx
+
\int_{I_m}\int_\Omega\Big(\bfS(\sigma,\cdot,\nabla\bfu(t_m))-\bfS(\sigma,\cdot,\nabla\bfu(\sigma))\Big):\nabla\bfphi\dxs
\\&+\int_{I_m}\int_\Omega\Big(\bfS(t_m,\cdot,\nabla\bfu(t_m))-\bfS(\sigma,\cdot,\nabla\bfu(t_m))\Big):\nabla\bfphi\dxs.
%&+\int_{t_{m-1}}^{t_m}\int_\Omega\left(\bff(\sigma)-\bff_m\right)\cdot \bfphi\dxs%+\Delta t\int_\Omega\left(\bff(t_m)-\bff_m\right)\cdot \bfphi\dx.
\end{aligned}
\end{align}
Setting $\bfphi=\bfw_m-\bfu_{m,h}=\bfe_m-(\bfu(t_m)-\bfw_m)$ we gain
\begin{align*}
&\int_\Omega|\bfe_m|^2 \dx +\Delta t\int_\Omega\Big(\bfS(t_m,\cdot,\nabla\bfu(t_m))-\bfS(t_m,\cdot,\nabla\bfu_{m,h})\Big):\nabla\bfe_m\dx
\\
&=\int_\Omega\bfe_{m-1}\cdot\bfe_m \dx+ \int_\Omega\bfe_{m-1}\cdot(\bfw_m-\bfu(t_m)) \dx
+\int_\Omega\bfe_m\cdot (\bfu(t_m)-\bfw_m)\dx
\\
&+\Delta t\int_\Omega\Big(\bfS(t_m,\cdot,\nabla\bfu(t_m))-\bfS(t_m,\cdot,\nabla\bfu_{m,h})\Big):\nabla(\bfu(t_m)-\bfw_m)\dx.
\\
&+\int_{I_m}\int_\Omega\Big(\bfS(\sigma,\cdot,\nabla\bfu(t_m))-\bfS(\sigma,\cdot,\nabla\bfu(\sigma))\Big):\nabla(\bfw_m-\bfu(t_m))\dxs
\\&+\int_{I_m}\int_\Omega\Big(\bfS(t_m,\cdot,\nabla\bfu(t_m))-\bfS(\sigma,\cdot,\nabla\bfu(t_m))\Big):\nabla(\bfw_m-\bfu(t_m))\dxs
\\& +\int_{I_m}\int_\Omega\Big(\bfS(\sigma,\cdot,\nabla\bfu(t_m))-\bfS(\sigma,\cdot,\nabla\bfu(\sigma))\Big):\nabla\bfe_m\dxs
\\&+\int_{I_m}\int_\Omega\Big(\bfS(t_m,\cdot,\nabla\bfu(t_m))-\bfS(\sigma,\cdot,\nabla\bfu(t_m))\Big):\nabla\bfe_m\dxs
%&+\int_{t_{m-1}}^{t_m}\int_\Omega\left(\bff(\sigma)-\bff(t_m)\right)\cdot \bfe_m\dxs  +\int_{t_{m-1}}^{t_m}\int_\Omega\left(\bff(\sigma)-\bff(t_m)\right)\cdot (\bfu(t_m)-\bfw_m)\dxs.
%&+\Delta t\int_\Omega\left(\bff(t_m)-\bff_m\right)\cdot \bfe_m\dx  +\Delta t\int_\Omega\left(\bff(t_m)-\bff_m\right)\cdot (\bfu(t_m)-\bfw_m)\dx.
\end{align*}
which can be written as
\begin{align*}
&\int_\Omega\bfe_m\cdot(\bfe_m-\bfe_{m-1}) \dx +\Delta t\int_\Omega\Big(\bfS(t_m,\cdot,\nabla\bfu(t_m))-\bfS(t_m,\cdot,\nabla\bfu_{m,h})\Big):\nabla\bfe_m\dx
\\&=\int_\Omega(\bfe_m - \bfe_{m-1})\cdot (\bfu(t_m)-\bfw_m)\dx
\\
&+\Delta t\int_\Omega\Big(\bfS(t_m,\cdot,\nabla\bfu(t_m))-\bfS(t_m,\cdot,\nabla\bfu_{m,h})\Big):\nabla(\bfu(t_m)-\bfw_m)\dx
\\
&+\int_{I_m}\int_\Omega\Big(\bfS(\sigma,\cdot,\nabla\bfu(t_m))-\bfS(\sigma,\cdot,\nabla\bfu(\sigma))\Big):\nabla(\bfw_m-\bfu(t_m))\dxs
\\&+\int_{I_m}\int_\Omega\Big(\bfS(t_m,\cdot,\nabla\bfu(t_m))-\bfS(\sigma,\cdot,\nabla\bfu(t_m))\Big):\nabla(\bfw_m-\bfu(t_m))\dxs
\\& +\int_{I_m}\int_\Omega\Big(\bfS(\sigma,\cdot,\nabla\bfu(t_m))-\bfS(\sigma,\cdot,\nabla\bfu(\sigma))\Big):\nabla\bfe_m\dxs
\\&+\int_{I_m}\int_\Omega\Big(\bfS(t_m,\cdot,\nabla\bfu(t_m))-\bfS(\sigma,\cdot,\nabla\bfu(t_m))\Big):\nabla\bfe_m\dxs.
\end{align*}
Now, we apply the identity $\bfa\cdot(\bfa-\bfb)=\frac{1}{2}\big(|\bfa|^2-|\bfb|^2+|\bfa-\bfb|^2\big)$ (which holds for any $\bfa,\bfb\in\mathbb R^N$) with $\bfa=\bfe_m$ and $\bfb=\bfe_{m-1}$) to the first term on the right-hand side to get
\begin{align*}
&\frac{1}{2}\left(\Vert\bfe_m\Vert_2^2- \Vert \bfe_{m-1}\Vert_2^2+\Vert\bfe_m-\bfe_{m-1}\Vert_2^2\right)  +c\,\Delta t\Vert\bfF(t_m,\cdot,\nabla\bfu(t_m))-\bfF(t_m,\cdot,\nabla\bfu_{m,h})\Vert_2^2
\\
&\leq\int_\Omega(\bfe_m -\bfe_{m-1})\cdot (\bfu(t_m)-\bfw_m)\dx
\\
&+\Delta t\int_\Omega\Big(\bfS(t_m,\cdot,\nabla\bfu(t_m))-\bfS(t_m,\cdot,\nabla\bfu_{m,h})\Big):\nabla(\bfu(t_m)-\bfw_m)\dx
\\
&+\int_{I_m}\int_\Omega\Big(\bfS(\sigma,\cdot,\nabla\bfu(t_m))-\bfS(\sigma,\cdot,\nabla\bfu(\sigma))\Big):\nabla(\bfw_m-\bfu(t_m))\dxs
\\&+\int_{I_m}\int_\Omega\Big(\bfS(t_m,\cdot,\nabla\bfu(t_m))-\bfS(\sigma,\cdot,\nabla\bfu(t_m))\Big):\nabla(\bfw_m-\bfu(t_m))\dxs
\\&+\int_{I_m}\int_\Omega\Big(\bfS(\sigma,\cdot,\nabla\bfu(t_m))-\bfS(\sigma,\cdot,\nabla\bfu(\sigma))\Big):\nabla\bfe_m\dxs
\\&+\int_{I_m}\int_\Omega\Big(\bfS(t_m,\cdot,\nabla\bfu(t_m))-\bfS(\sigma,\cdot,\nabla\bfu(t_m))\Big):\nabla\bfe_m\dxs
\\
%&+\int_{t_{m-1}}^{t_m}\int_\Omega\left(\bff(\sigma)-\bff_m\right)\cdot \bfe_m\dxs  +\int_{t_{m-1}}^{t_m}\int_\Omega\left(\bff(\sigma)-\bff_m\right)\cdot (\bfu(t_m)-\bfw_m)\dxs
%&+\Delta t\int_\Omega\left(\bff(t_m)-\bff_m\right)\cdot \bfe_m\dx  +\Delta t\int_\Omega\left(\bff(t_m)-\bff_m\right)\cdot (\bfu(t_m)-\bfw_m)\dx.
&=I_1+\cdots+I_{6}\end{align*}
using also Lemma \ref{lem:hammer}.
Now Young's inequality yields
\begin{align*}
I_1&\leq\,\frac{1}{2}\,\Vert\bfe_m-\bfe_{m-1}\Vert_2^2 + \frac{1}{2}\,\Vert\bfu(t_m)-\bfw_m\Vert_2^2.
\end{align*}
By Lemma \ref{lem:young2}, we obtain
% and Lemma \ref{lem:lemOne} 
%\rmk{Lemma 2.1(a) preserves the constant $\kappa$?} \textcolor{blue}{no:-)}
\begin{align*}
I_2
&\leq\,\delta\,\Delta t\, \Vert \bfF(t_m,\cdot,\nabla\bfu(t_m))-\bfF(t_m,\cdot,\nabla\bfu_{m,h}))\Vert_2^2\\&+c_\delta\Delta t\,\Vert\bfF(t_m,\cdot,\nabla\bfu(t_m))-\bfF(t_m,\cdot,\nabla\bfw_m)\Vert_2^2,
\\
I_3
&\leq\,\delta \,\int_{I_m}\Vert \bfF(\sigma,\cdot,\nabla\bfu(t_m))-\bfF(\sigma,\cdot,\nabla\bfw_m)\Vert_2^2\ds\\& +c_\delta\,\int_{I_m}\Vert \bfF(\sigma,\cdot,\nabla\bfu(t_m))-\bfF(\sigma,\cdot,\nabla\bfu(\sigma))\Vert_2^2\ds.
\end{align*}
%We have
%\begin{align*}
%I_5\leq\,c\int_{I_m}\int_\Omega \big(\varphi^{p(\sigma)}_{|\nabla\bfu(t_m)|}\big)'\big(|\nabla\bfu(t_m)-\nabla\bfu(\sigma)|\big)|\nabla\bfu(t_m)-\nabla\bfu_{m,h}|\dxs
%\end{align*}
%now we apply Young's inequality for the Young function $\varphi^{p(t_m)}_{|\nabla\bfu(t_m)|}$ such that
%\begin{align*}
%I_5&\leq\,\delta\int_{I_m}\int_\Omega \varphi^{p(t_m)}_{|\nabla\bfu(t_m)|}|\nabla\bfu(t_m)-\nabla\bfu_{m,h}|\big)\dxs\\
%&+c(\delta)\int_{I_m}\int_\Omega \big(\varphi^{p(t_m)}_{|\nabla\bfu(t_m)|}\big)^\ast\Big(\big(\varphi^{p(\sigma)}_{|\nabla\bfu(t_m)|}\big)'\big(|\nabla\bfu(t_m)-\nabla\bfu(\sigma)|\big)\Big)\dxs
%\end{align*}
%where $\delta>0$ is arbitrary. 
Now we use Lemma \ref{lemma:two} which applies due to Corollary \ref{cor:bddSimpx}. We obtain
\begin{align*}
I_5
%\leq\,c\delta\int_{I_m}\int_\Omega |\bfF(t_m,\cdot\nabla\bfu(t_m))-\bfF(t_m,\cdot,\nabla\bfu_{m,h}|^2\dxs\\
%&+c(\delta)\int_{I_m}\int_\Omega \varphi^{p(t_m)}_{|\nabla\bfu(t_m)|}\big(|\nabla\bfu(t_m)-\nabla\bfu(\sigma)|\big)\dxs\\
%&+c(\delta)\int_{I_m}\int_\Omega \varphi^{p(\sigma)}_{|\nabla\bfu(t_m)|}\big(|\nabla\bfu(t_m)-\nabla\bfu(\sigma)|\big)\dxs\\
&\leq\,\delta\Delta t\int_\Omega |\bfF(t_m,\cdot\nabla\bfu(t_m))-\bfF(t_m,\cdot,\nabla\bfu_{m,h})|^2\dx+\delta(\Delta t)^{2\alpha_t+1}\\
&+c(\delta)\int_{I_m}\int_\Omega |\bfF(t_m,\cdot\nabla\bfu(t_m))-\bfF(\sigma,\cdot,\nabla\bfu(t_m))|^2\dxs\\
&+c(\delta)\int_{I_m}\int_\Omega |\bfF(\sigma,\cdot,\nabla\bfu(t_m))-\bfF(\sigma,\cdot,\nabla\bfu(\sigma))|^2\dxs
\end{align*}
%as well as
%\begin{align*}
%I_5
%&\leq\,\delta \int_{I_m}\,\Vert \bfF(\sigma,\cdot,\nabla\bfu(t_m))-\bfF(\sigma,\cdot,\nabla\bfu_{m,h})\Vert_2^2\ds\\&+c_\delta\int_{I_m}\Vert \bfF(\sigma,\cdot,\nabla\bfu(t_m))-\bfF(\sigma,\cdot,\nabla\bfu(\sigma))\Vert_2^2\ds\\
%&\leq\,\delta \Delta t\,\Vert \bfF(t_m,\cdot,\nabla\bfu(t_m))-\bfF(t_m,\cdot,\nabla\bfu_{m,h})\Vert_2^2\\
%&+\,\delta \int_{I_m}\,\Vert \bfF(t_m,\cdot,\nabla\bfu(t_m))-\bfF(t_m,\cdot,\nabla\bfu(\sigma))\Vert_2^2\ds\\
%&+\,\delta \int_{I_m}\,\Vert \bfF(t_m,\cdot,\nabla\bfu_{m,h})-\bfF(\sigma,\cdot,\nabla\bfu_{m,h})\Vert_2^2\ds\\
%&+c_\delta\int_{I_m}\Vert \bfF(\sigma,\cdot,\nabla\bfu(t_m))-\bfF(\sigma,\cdot,\nabla\bfu(\sigma))\Vert_2^2\ds
%\end{align*} 
for every $\delta>0$. 
%Furthermore, we can also apply Young's inequality to the forcing terms to get that
%\begin{align*}
%I_7+I_8&\leq \int_{t_{m-1}}^{t_m}\Vert\bff(\sigma)-\bff(t_m)\Vert_2^2\ds
%+{\Delta t}\,\Vert\bfe_m\Vert_2^2
%+
%{\Delta t}\Vert\bfw_m-\bfu(t_m)\Vert_2^2
%\\
%&\leq c\,(\Delta t)^{2\alpha_t}[\bff]^2_{W^{\alpha_t,2}(0,T;L^2(\Omega))}+{\Delta t}\,\Vert\bfe_m\Vert_2^2
%+
%{\Delta t}\Vert\bfw_m-\bfu(t_m)\Vert_2^2
%\end{align*}
%where we have used Corollary \ref{cor:crucial}.
Now, using Lemma \ref{lem:lemOne} we also get that
\begin{align*}
I_4 +I_6 
&\leq
\,\delta\Delta t\,\Vert \bfF(t_m,\cdot,\nabla\bfu(t_m))-\bfF(t_m,\cdot,\nabla\bfw_m)\Vert_2^2 \\&+\delta
\Delta t\,\Vert \bfF(t_m,\cdot,\nabla\bfu(t_m))-\bfF(t_m,\cdot,\nabla\bfu_{m,h})\Vert_2^2 
\\
&+c_\delta(\Delta t)^{2\alpha_t+1}\int_\Omega\left(1+| \nabla \bfu(t_m)| \right)^{p(t_m,\cdot)s}\dx
\end{align*}
for every $\delta>0$.
%We can now apply the local estimate \cite[Eq. 4.11]{BrDS2} to $\tilde{\bfv}_h=\bfu(t_m)$. Consequently, we can then use Lemma \ref{lem:pxpy} to estimate any of the last two terms by the other.\\
Collecting the above results with an appropriate choice of $\delta$ yields the claim.
\end{proof}

Based on Lemma \ref{lem:lemTwo} and the regularity assumptions in \eqref{ass:reg} 
we are now able to finish the proof of Theorem \ref{thm:4}. Note that so far we have not used the assumption $\alpha_t>\frac{1}{2}$. But it will be crucial in the following.

\begin{proof}[Proof of Theorem \ref{thm:4}]
We denote the terms appearing on the right-hand side of the inequality in Lemma \ref{lem:lemTwo} by $J_1,...,J_8$ and estimate them one by one.
First of all, we use the following estimate
\begin{align}
\vert\bfF(t,x, \bfxi) - \bfF(\sigma,x,\bfxi)\vert &\leq c\,\vert p(t,x)-p(\sigma,x)\vert\vert\ln(\kappa + \vert \bfxi \vert)\vert\left((\kappa + \vert \bfxi \vert)^{\frac{p(t,x)-2}{2}} + (\kappa + \vert \bfxi \vert)^{\frac{p(\sigma,x)-2}{2}}\right)\vert\bfxi\vert\nonumber
\\&\leq 
c\, (\Delta t)^{\alpha_t}\vert\ln(\kappa + \vert \bfxi \vert)\vert\left((\kappa + \vert \bfxi \vert)^{\frac{p(t,x)-2}{2}} + (\kappa + \vert \bfxi \vert)^{\frac{p(\sigma,x)-2}{2}}\right)\vert\bfxi\vert\nonumber
\\
&\leq c\, (\Delta t)^{\alpha_t}\vert\ln(\kappa + \vert \bfxi \vert)\vert\left(1+(\kappa + \vert \bfxi \vert)^{c(\Delta t)^{\alpha_t}}\right) (\kappa + \vert \bfxi \vert)^{\frac{p(t,x)-2}{2}}\vert\bfxi\vert\nonumber
\\
&\leq 
c_s\, (\Delta t)^{\alpha_t}(1 + \vert \bfxi \vert)^{\frac{p(t,x)s}{2}}
\label{eq:2808}
\end{align}
 which holds for arbitrary $s>1$
provided $\Delta t$ is small enough, cf. the proof of Lemma \ref{lem:lemOne}. So we get 
\begin{align*}
J_1
&
\leq \int_{I_m}\Vert\bfF(t_m,\cdot,\nabla\bfu(t_m))-\bfF(\sigma,\cdot,\nabla\bfu(\sigma))\Vert_2^2 \ds\\
&+c_s\,(\Delta t)^{2\alpha_t+1}\int_\Omega(1+\vert \nabla \bfu(t_m)\vert)^{p(t_m,\cdot)s}\dx
\end{align*}
for any $s>1$. Now we use \eqref{ass:reg} together with Corollary \ref{cor:crucial}
to obtain
\begin{align}\label{eq:2308}\begin{aligned}
\int_{I_m}&\Vert\bfF(t_m,\cdot,\nabla\bfu(t_m))-\bfF(\sigma,\cdot,\nabla\bfu(\sigma))\Vert_2^2 \ds\\&\leq \,c(\Delta t)^{2\alpha_t}\left[\bfF(\cdot,\cdot,\nabla\bfu)\right]^2_{W^{\alpha_t,2}(I_m;L^2(\Omega))}.
\end{aligned}
\end{align}
Note that this estimate requires $\alpha_t>\frac{1}{2}$. $J_2$ can be estimated in the same fashion. Furthermore,  we can estimate $J_5$ as in \eqref{eq:2308}.
By the choice of $\bfw_m=\Pi_h \bfu(t_m)$, we obtain from Lemma \ref{thm:app_V} (by choosing $\bfQ=\bfQ(\sigma)$ appropriately)
\begin{align*}
J_3&\leq \,c\Delta t\,\Vert \bfF(t_m,\cdot,\nabla\bfu(t_m))-\bfF(t_m,\cdot,\bfQ)\Vert_2^2+c\,\mathrm{h}^2\\
&\leq 
\,c\int_{I_m}\,\Vert \bfF(\sigma,\cdot,\nabla\bfu(\sigma))-\bfF(\sigma,\cdot,\bfQ)\Vert_2^2\ds+\,c\int_{I_m}\,\Vert \bfF(t_m,\cdot,\bfQ)-\bfF(\sigma,\cdot,\bfQ)\Vert_2^2\ds\\
&+\,c\int_{I_m}\,\Vert \bfF(t_m,\cdot,\nabla\bfu(t_m))-\bfF(\sigma,\cdot,\nabla\bfu(\sigma))\Vert_2^2\ds+c\mathrm{h}^2\\
%&\leq\,c\int_{t_{m-1}}^{t_m}\,\Vert \bfF(\sigma,\cdot,\nabla\bfu(t_m))-\bfF(\sigma,\cdot,\bfQ)\Vert_2^2\ds+c\,h^2\\
%&+c(\Delta t)^{2\alpha_t}\int_{t_{m-1}}^{t_m}\int_\Omega\ln(\kappa+\vert \nabla \bfu(t_m)\vert)^2(\kappa+\vert \nabla \bfu(t_m)\vert)^{p(t_m,\cdot)}\dxs\\
%&+c(\Delta t)^{2\alpha_t}\int_{t_{m-1}}^{t_m}\int_\Omega\ln(\kappa+\vert \bfQ\vert)^2(\kappa+\vert \bfQ\vert)^{p(t_m,\cdot)}\dxs\\
&=: J_3^1+J_3^2+J_3^3+c\,\mathrm{h}^2.
\end{align*}
%arguing as in \eqref{eq:2808}. 
%Now we choose $\bfQ$ in accordance with
%Corollary \ref{thm:app_V}
On account of \eqref{eq:app_V2} combined with a similar argument as in \eqref{eq:2808} (using instead $\alpha_x$-H\"{o}lder continuity), we obtain
\begin{align*}
J_3^1&\leq\,c\, \mathrm{h}^{2\alpha_x}\,
    \int_{I_m}\left[\bfF(\sigma,\cdot,\nabla \bfu(\sigma))\right]^2_{W^{\alpha_x,2}(\Omega)}\ds
    \\&  
+c\,\mathrm{h}^{2\alpha_x}\int_{I_m}\int_{\Omega}\ln(\kappa+|\nabla\bfu(\sigma)|)^2(\kappa+|\nabla\bfu(\sigma)|)^{p(\sigma,\cdot)}\dxs+c\,\mathrm{h}^{2\alpha_x}
\\
&\leq\,c\, \mathrm{h}^{2\alpha_x}\,
    \int_{I_m}\left[\bfF(\sigma,\cdot,\nabla \bfu(\sigma))\right]^2_{W^{\alpha_x,2}(\Omega)}\ds   
 \\&
  +c\, \mathrm{h}^{2\alpha_x}\int_{I_m}\int_{\Omega}(1+|\nabla\bfu(\sigma)|)^{p(\sigma,\cdot)s}\dxs +c\,\mathrm{h}^{2\alpha_x}.
\end{align*}
By \eqref{eq:2308} we have
\begin{align*}
J_3^3&\leq \,c(\Delta t)^{2\alpha_t}\left[\bfF(\cdot,\cdot,\nabla\bfu)\right]^2_{W^{\alpha_t,2}(I_m;L^2(\Omega))}.
\end{align*}
In order to estimate $J_3^2$ we use that $\bfF(\cdot,\cdot,\nabla\bfu)\in L^\infty(0,T;\big(L^2(\Omega)\big)^{N\times n})$ which follows from \eqref{ass:reg}, $\alpha_t>\frac{1}{2}$ and Lemma \ref{thm:sobboch}.
Hence, \eqref{eq:2808b} yields
\begin{align*}
J_3^2&\leq\,c(\Delta t)^{2\alpha_t}\int_{I_m}\int_\Omega\ln(\kappa+\vert \bfQ\vert)^2(\kappa+\vert \bfQ\vert)^{p(\sigma,\cdot)}\dxs\\
&+\,c(\Delta t)^{2\alpha_t}\int_{I_m}\int_\Omega\ln(\kappa+\vert \bfQ\vert)^2(\kappa+\vert \bfQ\vert)^{p(t_m,\cdot)}\dxs\\
&\leq\,c(\Delta t)^{2\alpha_t}\int_{I_m}\int_\Omega\big(\ln(\kappa+\vert \nabla \bfu(\sigma)\vert)^2(\kappa+\vert \nabla \bfu(\sigma)\vert)^{p(\sigma,\cdot)}+1\big)\dxs
\\
&+\,c(\Delta t)^{2\alpha_t}\int_{I_m}\int_\Omega\big(\ln(\kappa+\vert \nabla \bfu(\sigma)\vert)^2(\kappa+\vert \nabla \bfu(\sigma)\vert)^{p(t_m,\cdot)}+1\big)\dxs
\\
&
\leq\,c(\Delta t)^{2\alpha_t}\int_{I_m}\int_\Omega(1+\vert \nabla \bfu(\sigma)\vert)^{p(\sigma,\cdot)s}\dxs\\
&
+\,c(\Delta t)^{2\alpha_t}\int_{I_m}\int_\Omega(1+\vert \nabla \bfu(\sigma)\vert)^{p(t_m,\cdot)s}\dxs
\end{align*}
using also \eqref{eq:2808}. In order to estimate $J_4$ we use \eqref{eq:app_V1} to get
\begin{align*}
J_4&\leq \,c\,\int_{I_m}\Vert \bfF(\sigma,\cdot,\nabla\bfu(t_m))-\bfF(\sigma,\cdot,\bfQ)\Vert_2^2\ds+c\,\mathrm{h}^2
\\&\leq
\,c\int_{I_m}\,\Vert \bfF(\sigma,\cdot,\nabla\bfu(\sigma))-\bfF(\sigma,\cdot,\bfQ)\Vert_2^2\ds\\
&+
%\,c\int_{I_m}\,\Vert \bfF(t_m,\cdot,\bfQ)-\bfF(\sigma,\cdot,\bfQ)\Vert_2^2\ds
%\\&+ 
\,c\int_{I_m}\,\Vert \bfF(t_m,\cdot,\nabla\bfu(t_m))-\bfF(\sigma,\cdot,\nabla\bfu(\sigma))\Vert_2^2\ds\\
&+
%\,c\int_{I_m}\,\Vert \bfF(t_m,\cdot,\bfQ)-\bfF(\sigma,\cdot,\bfQ)\Vert_2^2\ds
%\\&+ 
\,c\int_{I_m}\,\Vert \bfF(t_m,\cdot,\nabla\bfu(t_m))-\bfF(\sigma,\cdot,\nabla\bfu(t_m))\Vert_2^2\ds
+c\,\mathrm{h}^2
\\&= J_4^1+J_4^2+J_4^3+c\,\mathrm{h}^2.
\end{align*}
Note that $J_4^1$ coincides with $J_3^1$, whereas $J_4^2$ coincides with $J_3^3$. Now on account of \eqref{eq:2808}, we obtain
\begin{align*}
J_4^3\leq
&c\,(\Delta t)^{2\alpha_t+1}\int_\Omega(1+\vert \nabla \bfu(t_m)\vert)^{p(t_m,\cdot)s}\dx.
\end{align*}
Finally, we have
\begin{align*}
J_6\leq \,c\, \mathrm{h}^{2\alpha_x}\left[\bfu(t_m)\right]^2_{W^{\alpha_x,2}(\Omega)}
\end{align*}
using \eqref{eq:stab''}.
% and the fact that $\mathrm{h}^n<\mathrm{h}^{2}$ for  $\mathrm{h}\ll 1$. 
%Finally, we see that
%\begin{align*}
%J_6&\leq 
%\,c\int_{I_m}\,\Vert \bfF(\sigma,\cdot,\nabla\bfu(t_m))-\bfF(t_m,\cdot,\nabla\bfu(t_m))\Vert_2^2\ds
%\\&+
%\,c\int_{I_m}\,\Vert \bfF(\sigma,\cdot,\nabla\bfu_{m,h})-\bfF(t_m,\cdot,\nabla\bfu_{m,h})\Vert_2^2\ds
%\\&+
%\,c(\delta)\,\Delta t\,\Vert \bfF(t_m,\cdot,\nabla\bfu(t_m))-\bfF(t_m,\cdot,\nabla\bfu_{m,h})\Vert_2^2
%\\&= J_6^1+J_6^2+J_6^3
%\end{align*}
%where we aim to absorb $J_6^3$ into the left hand side of Lemma \ref{lem:lemTwo} via a Gr\"{o}nwell's argument. Now \eqref{eq:2808} gives
%\begin{align*}
%J_6^1 \,
%\leq\,c(\Delta t)^{2\alpha_t}\int_{I_m}\int_\Omega(1+\vert \nabla \bfu(t_m)\vert)^{p(\sigma,\cdot)s}\dxs.
%\end{align*}
%And again, we can use \eqref{eq:2808} to get
%\begin{align*}
%J_6^2 \,
%&\leq\,c(\Delta t)^{2\alpha_t+1}\int_\Omega(1+\vert \nabla \bfu_{m,h}\vert)^{p(t_m,\cdot)s}\dx
%\end{align*}
%where as a result of Corollary \ref{cor:bddSimpx}, we can able to use Lemma \ref{lem:pxpy} to change the exponent.
Plugging all together shows
\begin{align*}
&\Vert \bfe_m\Vert_2^2  +\Delta t\,\Vert\bfF(t_m,\cdot,\nabla\bfu(t_m))-\bfF(t_m,\cdot,\nabla\bfu_{m,h})\Vert_2^2\\
&\leq \,c(\Delta t)^{2\alpha_t}\left[\bfF(\cdot,\cdot,\nabla\bfu)\right]^2_{W^{\alpha_t,2}(I_m;L^2(\Omega))}
+  c \,\mathrm{h}^{2\alpha_x}\,
   \left[\bfF(\sigma,\cdot,\nabla \bfu(\sigma))\right]^2_{L^2\left(I_m;W^{\alpha_x,2}(\Omega)\right)}
\\&
+c\,\mathrm{h}^{2\alpha_x} \,\left[\bfu(t_m)\right]^2_{W^{\alpha_x,2}(\Omega)} +c\big((\Delta t)^{2\alpha_t}+\mathrm{h}^{2\alpha_x}\big)\int_{I_m}\int_\Omega(1+\vert \nabla \bfu(\sigma)\vert)^{p(\sigma,\cdot)s}\dxs
\\&
\,c(\Delta t)^{2\alpha_t+1}\int_\Omega(1+\vert \nabla \bfu(t_m)\vert)^{p(t_m,\cdot)s}\dx+\,c(\Delta t)^{2\alpha_t}\int_{I_m}\int_\Omega(1+\vert \nabla \bfu(\sigma)\vert)^{p(t_m,\cdot)s}\dxs\\
%+c(\Delta t)^{2\alpha_t}\int_{I_m}\int_\Omega(1+\vert \nabla \bfu(t_m)\vert)^{p(\sigma,\cdot)s}\dxs
%+\,c\,\mathrm{h}^{2\alpha_x}\int_{I_m}\int_\Omega(1+\vert \nabla \bfu(\sigma)\vert)^{p(t_m,\cdot)s}\dxs
%\\ + c\,(\Delta t)^{2\alpha_t+1}\int_\Omega(1+\vert \nabla \bfu_{m,h}\vert)^{p(t_m,\cdot)s}\dx
&+c\,\mathrm{h}^{2\alpha_x} +\Vert \bfe_{m-1}\Vert_2^2.
\end{align*}
%where we have used $\mathrm{h}^{2}\leq\mathrm{h}^{2\alpha_x}$. 
Iterating this and applying Gronwall's lemma implies (increasing $s$ as the case may be)
\begin{align*}
&\sup_{0\leq m\leq M}\Vert \bfe_m\Vert_2^2  +\Delta t\sum_{m=1}^M\,\Vert\bfF(t_m,\cdot,\nabla\bfu(t_m))-\bfF(t_m,\cdot,\nabla\bfu_{m,h})\Vert_2^2\\
&\leq 
\,c(\Delta t)^{2\alpha_t}\left[\bfF(\cdot,\cdot,\nabla\bfu)\right]^2_{W^{\alpha_t,2}(0,T;L^2(\Omega))}
+  c\,\mathrm{h}^{2\alpha_x}\,
    \left[\bfF(\sigma,\cdot,\nabla \bfu(\sigma))\right]^2_{L^2\left(0,T;W^{\alpha_x,2}(\Omega)\right)}
\\&
+c\frac{\mathrm{h}^{2\alpha_x}}{\Delta t}\sup_{t\in(0,T)}\left[\bfu(t)\right]^2_{W^{\alpha_x,2}(\Omega)}
 +c\big((\Delta t)^{2\alpha_t}+\mathrm{h}^{2\alpha_x}\big)\sup_{t\in(0,T)}\int_\Omega(1+\vert \nabla \bfu(t)\vert)^{p(t,\cdot)s}\dx 
+ \|\bfe_0\|_2^2.
\end{align*}
We can  then apply \eqref{eq:stab''} to $\bfe_0$ (recalling that $\bfu_{0,h} :=\Pi_h\bfu_0$) which will subsequently be bounded as a result of \eqref{assumpt}. This shows the claim as all integrals on the right-hand side are finite by \eqref{ass:reg}. The only term in  need of an explanation is
$$\sup_{t\in(0,T)}\int_\Omega(1+\vert \nabla \bfu(t)\vert)^{p(t,\cdot)s}\dx\leq \,c\bigg(\sup_{t\in(0,T)}\int_\Omega |\bfF(t,\cdot,\nabla\bfu(t))|^{2s}\dx+1\bigg).$$
Now, due to Lemmas \ref{lem:inter} (with $\theta$ such that $\theta\alpha_t>\frac{1}{2}$) and \ref{thm:sobboch} we have $$\bfF(\cdot,\cdot,\nabla\bfu)\in C^0([0,T];L^{2s}(\Omega))$$ for some $s>1$ (close enough to 1) due to the Sobolev embedding $W^{\theta\alpha_x,2}(\Omega)\hookrightarrow L^{\frac{2n}{n-2\theta\alpha_x}}(\Omega)$, cf. Lemma \ref{lema}. In particular $\int_\Omega |\bfF(t,\cdot,\nabla\bfu(t))|^{2s}\dx$ is bounded in time. The proof of Theorem \ref{thm:4} is hereby complete noticing that $\mathrm{h}^{2\alpha_x}\leq \,c(\Delta t)^{1+2\alpha_t}$ by \eqref{assumpt}.
\end{proof}

\section{Regularity of solutions}
\label{sec:reg}

Regularity of $\bfu$ is best studied via the nonlinear tensor
$$\bfF(t,x,\bfxi)=(\kappa+|\bfxi|)^{\frac{p(t,x)-2}{2}}\bfxi.$$
We consider exponents $p\in C^{\alpha_x,\alpha_t}(\overline{Q})$, where $\alpha_x,\alpha_t\in(0,1]$.
% meaning that there is $c\geq0$ such that
%\begin{align*}
%|p(t,x)-p(s,y)|\leq\,c\big(|t-s|^{\alpha_t}+|x-y|^{\alpha_x}\big)
%\end{align*}
%for all $s,t\in [0,T]$ and all $x,y\in\overline{\Omega}$.
It is expected that H\"older-continuity of $p$ transfers to fractional differentiability of
$\bfF(\cdot,\cdot,\nabla\bfu)$ with the same exponent. Results in a similar fashion can be found, in particular, in \cite{AcMiSe}. 
\begin{theorem}
\label{thm:quasispace}
Let  $\alpha_x,\alpha_t \in (0,1]$ be given and let $\Omega$ be a bounded $C^{1,\alpha_x}$-domain. Assume that $p\in C^{\alpha_x,\alpha_t}(\overline{Q})$ with $p^->1$ and $p^+<\infty$. 
Let $\bfu$ be the unique weak solution to \eqref{eq:heat} in the sense of Definition \ref{def:weak} with 
\begin{align}\label{ass:fu0}
\begin{aligned}
&\bff\in \big(\mathcal V^{p'(\cdot)}_0(Q)\big)^N\cap C^{\alpha_t}\big(0,T;\big(L^2(\Omega)\big)^N\big),\\ &\bfu_0\in \big(W^{1,2}_0(\Omega)\big)^N,\quad \Div\bfS(0,\cdot,\nabla\bfu_0)\in \big(L^2(\Omega)\big)^N.
\end{aligned}
\end{align}
Then we have
\begin{align}
\label{quasispace}
\begin{aligned}
\bfF(\cdot,\cdot,\nabla\bfu)&\in L^2\big(0,T;\big(W^{\tilde\alpha_x,2}(\Omega)\big)^{N\times n}\big)\cap W^{\tilde\alpha_t,2}\big(0,T;\big(L^2(\Omega)\big)^{N\times n}\big),\\
\bfu&\in L^\infty(0,T;\big(W^{\tilde\alpha_x,2}(\Omega)\big)^N),
\end{aligned}
\end{align}
for all $\tilde\alpha_x<\alpha_x$ and all $\tilde\alpha_t<\alpha_t$.
\end{theorem}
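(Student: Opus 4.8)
The plan is to prove \eqref{quasispace} by the Nirenberg translation method, carried out separately in time and in space, the only genuinely new feature compared with the constant–exponent theory being the commutator terms generated by the $x$– and $t$–dependence of $p$, which carry the logarithmic factor of \eqref{growth2}; these will be absorbed by exactly the shifted–Young–function calculus already used in the proofs of Lemmas~\ref{lem:lemOne} and~\ref{lemma:two}. The argument is conditional on a higher–integrability bound $\int_Q(1+|\nabla\bfu|)^{p(\cdot)s}\dxt<\infty$ for some $s>1$, which is established independently in the last step.

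\emph{Step~1: fractional regularity in time.} For small $h>0$ put $\tau_h w(t):=w(t+h,\cdot)-w(t,\cdot)$. Since $\bfu$ is a weak solution, the Gelfand–triple energy identity applies to $\tau_h\bfu$, so testing the equation satisfied by $\tau_h\bfu$ with $\tau_h\bfu$ gives, for a.e.\ $t$, the term $\tfrac12\tfrac{\dd}{\dd t}\|\tau_h\bfu(t)\|_2^2$ from the parabolic part, while the principal part splits — using \eqref{growth1}--\eqref{growth2} — into a monotone contribution bounded below by $c\,\|\tau_h[\bfF(\cdot,\cdot,\nabla\bfu)](t)\|_2^2$ (Lemma~\ref{lem:hammer}) and a commutator of size $\lesssim h^{\alpha_t}\bigl(1+|\ln(\kappa+|\nabla\bfu|)|\bigr)(\kappa+|\nabla\bfu|)^{p-2}|\nabla\bfu|\,|\nabla\tau_h\bfu|$, which Young's inequality with the shifted Young functions $\varphi^{(t,x)}_{|\nabla\bfu|}$ absorbs into the monotone term up to $c\,h^{2\alpha_t}\int_Q(1+|\nabla\bfu|)^{p(\cdot)s}\dxt$ for any $s>1$. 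Using the compatibility condition $\Div\bfS(0,\cdot,\nabla\bfu_0)\in(L^2(\Omega))^N$ and $\bff\in C^{\alpha_t}(0,T;(L^2(\Omega))^N)$ from \eqref{ass:fu0} to control $\|\tau_h\bfu(t)\|_2$ near $t=0$, and Gronwall's lemma, one arrives at
\[
\int_0^{T-h}\|\bfF(t+h,\cdot,\nabla\bfu(t+h))-\bfF(t,\cdot,\nabla\bfu(t))\|_2^2\dt\leq c\,h^{2\alpha_t}\Big(\int_Q(1+|\nabla\bfu|)^{p(\cdot)s}\dxt+1\Big),
\]
and a Bochner version of Lemma~\ref{le2} converts this into $\bfF(\cdot,\cdot,\nabla\bfu)\in W^{\tilde\alpha_t,2}(0,T;(L^2(\Omega))^{N\times n})$ for every $\tilde\alpha_t<\alpha_t$.

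\emph{Step~2: fractional regularity in space.} Fixing a coordinate direction $e_\gamma$ and writing $\tau_h v(x):=v(x+he_\gamma)-v(x)$, I would test the time–integrated formulation \eqref{eq:heatweak1'} with $\bfvarphi=-\tau_{-h}(\zeta^2\tau_h\bfu)$ for an interior cut–off $\zeta$. The same splitting as in Step~1 — the commutator now carrying $h^{\alpha_x}$ through \eqref{growth2} — together with Young's inequality with shifted Young functions yields
\[
\sup_t\int_\Omega\zeta^2|\tau_h\bfu(t)|^2\dx+\int_0^T\!\!\int_\Omega\zeta^2\,|\tau_h[\bfF(\cdot,\cdot,\nabla\bfu)]|^2\dxt\leq c\,h^{2\alpha_x}\Big(\int_Q(1+|\nabla\bfu|)^{p(\cdot)s}\dxt+1\Big),
\]
the initial datum contributing $\|\tau_h\bfu_0\|_2^2\leq c\,h^2\|\nabla\bfu_0\|_2^2$ and the lower–order cut–off terms being harmless. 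Near $\partial\Omega$ I would flatten the boundary using the $C^{1,\alpha_x}$–charts and repeat the argument with tangential difference quotients, the error introduced by the change of variables being of order $h^{\alpha_x}$ — this is precisely what the $C^{1,\alpha_x}$–regularity of $\Omega$ buys — and the missing normal direction being controlled through the equation itself, $\partial_t\bfu-\bff=\Div\bfS(\cdot,\cdot,\nabla\bfu)$. Summing over $\gamma$ and over a finite covering of $\overline{\Omega}$, Lemma~\ref{le2} then delivers $\bfF(\cdot,\cdot,\nabla\bfu)\in L^2(0,T;(W^{\tilde\alpha_x,2}(\Omega))^{N\times n})$ and $\bfu\in L^\infty(0,T;(W^{\tilde\alpha_x,2}(\Omega))^N)$ for every $\tilde\alpha_x<\alpha_x$, again under the integrability proviso.

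\emph{Step~3: closing the loop, and the main obstacle.} It remains to verify $\int_Q(1+|\nabla\bfu|)^{p(\cdot)s}\dxt<\infty$ for some fixed $s>1$ — and this, together with the boundary argument, is where the real work lies. It is the self–improving (Gehring/reverse–Hölder) property for parabolic systems with $p(t,x)$–growth: from the basic energy bound $\int_Q(1+|\nabla\bfu|)^{p(\cdot)}\dxt<\infty$, a Caccioppoli inequality on parabolic cylinders, the Sobolev--Poincaré inequality, and the $\log$–Hölder continuity of $p$ (via Lemma~\ref{lem:pxpy}, which lets one freeze the exponent on small cylinders) produce a reverse Hölder inequality, whence $(1+|\nabla\bfu|)^{p(\cdot)s}\in L^1(Q)$ for some $s=s(n,p^-,p^+,c_{\log}(p))>1$, cf.\ \cite{AceM05,AcMiSe}; since this uses only the energy bound, there is no circularity. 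Feeding it back into Steps~1 and~2 completes the proof. The loss $\tilde\alpha_t<\alpha_t$, $\tilde\alpha_x<\alpha_x$ (rather than equality) is unavoidable because, as the translation parameter tends to $0$, the frozen–exponent estimates only hold with $p$ replaced by $p\pm o(1)$; the two commutators carrying the logarithm of \eqref{growth2} — absent for constant $p$ — are what force the slightly supercritical power $(1+|\nabla\bfu|)^{p(\cdot)s}$, so establishing its finiteness and pushing the difference–quotient argument up to a merely $C^{1,\alpha_x}$ boundary are the two genuinely delicate points, everything else running parallel to the constant–exponent case.
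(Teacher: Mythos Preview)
Your overall architecture --- Nirenberg translations separately in $t$ and in $x$, the commutator carrying the factor $h^{\alpha_t}$ resp.\ $h^{\alpha_x}$ from \eqref{growth2}, absorbed via the shifted Young functions $\varphi^{(\cdot,\cdot)}_a$ and Lemma~\ref{lem:hammer} --- is exactly the scheme the paper runs, and your Step~1 matches Section~4.4 almost line for line. (The paper also prepends a $\delta$-regularisation layer, \eqref{eq:heatdelta}--\eqref{eq:stressdelta}, to make the formal energy identities rigorous; you suppress this, which is fine, but it should be mentioned.)

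Two points diverge from the paper. First, your boundary treatment in Step~2 is a real gap as written. ``The missing normal direction being controlled through the equation itself'' is the classical linear device --- recover $\partial_{nn}u$ from the PDE once tangential second derivatives are known --- but the target here is \emph{fractional} regularity of the nonlinear quantity $\bfF(\cdot,\cdot,\nabla\bfu)$, not of $\nabla^2\bfu$, and the equation only gives $\partial_n\bfS_n$ in terms of $\partial_t\bfu$, $\bff$ and tangential derivatives of $\bfS$; turning this into a normal difference-quotient estimate for $\bfF$ in the degenerate regime is neither automatic nor obviously non-circular with Step~1. The paper avoids the issue altogether: after flattening it extends $\tilde\bfu$ by odd reflection across $\{x_n=0\}$ to the full ball, verifies that the reflected operator $\overline\bfS$ still satisfies \eqref{growth1}--\eqref{growth2} with the reflected exponent $\overline p$, and then simply re-applies the interior estimate \eqref{eq:1902a}. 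This reflection step (originally from \cite{ChenDiBe}, adapted from \cite{BCDS}) is the correct substitute for your normal-direction sentence.

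Second, your Step~3 and the paper's closure are genuinely different routes. You invoke an independent parabolic Gehring/reverse-H\"older lemma for $p(t,x)$-growth; this is available in the literature and would work. The paper is instead self-contained: having derived the conditional estimate $\|\bfF(\cdot,\cdot,\nabla\bfu)\|_{W^{\alpha,2}(Q)}^2\leq c\int_Q|\bfF(\cdot,\cdot,\nabla\bfu)|^{2+\varepsilon}\dxt+c$, it uses the Sobolev embedding $W^{\alpha,2}(Q)\hookrightarrow L^\chi(Q)$ for some $\chi>2$ (Lemma~\ref{lema}), interpolates $L^{2+\varepsilon}$ between $L^2$ and $L^\chi$, and for $\varepsilon$ small enough absorbs the right-hand side back into the left. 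Both close the loop; the paper's bootstrap avoids importing an external higher-integrability theorem, while yours separates concerns more cleanly.
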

\begin{remark}
If $\alpha_x=1$ ($\alpha_t=1$) then \eqref{quasispace} holds for $\tilde\alpha_x=1$ ($\tilde\alpha_t=1$).
%\item The statement of Theorem \ref{thm:quasispace} continuous to globally in time if $\bfu_0$ is smooth enough.
%\item We expect that the statement of Theorem \ref{thm:quasispace} holds globally in space provided $\partial\Omega$ is smooth enough (for instance $C^{1,1}$).
%A strategy to show this is introducing local coordinates, flattening the boundary and reflecting the problem at the boundary, cf. \cite{ChenDiBe}. The so-obtained problem does not have the structure \eqref{eq:stress} but the growth conditions remain valid.
%Local estimates for the new problem (as in Theorem \ref{thm:quasispace}) yield boundary estimates for the original one. 
%\item This is in line with corresponding results in the elliptic setting, where non-autonomous problems have been studied extensively in the context of variational problems with $(p,q)$-growth of the form
%\begin{align*}
%\mathscr F[\bfu]=\int_\Omega F(x,\nabla \bfu)\dx\longrightarrow \text{min}.
%\end{align*}
%The special case $F(x,\nabla \bfu)=|\nabla \bfu|^{p(x)}$ leads to the $\px$-Laplacian. It has been shown in \cite{ELM} that the qualitative properties of solutions significantly depend on the regularity of the $x$-dependence of $F$. If the $x$-dependence is smooth enough, however, the results are in line with the autonomous case, cf. \cite{BF} and \cite{Br}.   
\end{remark}
\emph{Proof of Theorem \ref{thm:quasispace}.}
\vspace{-1ex}
\subsection{Preparations} In order to guarantee that all terms in the following computations are well-defined, we have to regularize the problem. For $\delta>0$, we consider the problem
\begin{align}
\left\{\begin{array}{rc}
\partial_t \bfu_\delta=\Div\bfS_\delta(\nabla\bfu_\delta)+\bff& \mbox{in $Q=(0,T)\times\Omega $,}\\
\bfu_\delta=0\qquad\quad& \mbox{ \,on $(0,T)\times\partial\Omega$,}\\
\bfu_\delta(0,\cdot)=\bfu_{0}\qquad& \mbox{in $\Omega$,}\end{array}\right.\label{eq:heatdelta}
\end{align}
%\begin{align}
%\partial_t \bfu=\Div\bfS(\nabla \bfu)+\bff\quad\text{in}\quad Q=(0,T)\times \Omega,\quad \bfu=0\quad\text{on}\quad(0,T)\times\partial\Omega,\quad \bfu(0,\cdot)=\bfu^0,
%\end{align}
where the nonlinear tensor $\bfS_\delta$ is given by
\begin{align}\label{eq:stressdelta}
\bfS_\delta(\nabla \bfu_\delta)=\bfS_\delta(t,x,\nabla \bfu_\delta)=(\kappa+|\nabla\bfu_\delta|)^{p(t,x)-2}\nabla\bfu_\delta+\delta(\kappa+|\nabla\bfu_\delta|)^{q-2}\nabla\bfu_\delta
\end{align}
with $q>\max\{2,p^{+}\}$. For $\delta>0$ fixed, this is a problem with standard growth conditions. The existence of a unique weak solution
$$\bfu_\delta\in C([0;T];\big(L^2(\Omega)\big)^N)\cap L^q(0,T;\big(W^{1,q}_0(\Omega)\big)^N)$$
to \eqref{eq:heatdelta} can be shown by monotone operator theory. Moreover, we have the uniform estimate
\begin{align*}
\sup_{t\in(0,T)} \int_\Omega |\bfu_\delta|^2\dx+\int_0^T\int_\Omega|\nabla\bfu_\delta|^{\px}\dxt+\delta\int_0^T\int_\Omega|\nabla\bfu_\delta|^{q}\dxt\leq\,c(\bfu_0,\bff).
\end{align*}
This follows formally be testing \eqref{eq:heatdelta} with $\bfu_\delta$. Consequently, we have
\begin{align*}
\bfu_\delta&\rightharpoonup \bfu\quad\text{in}\quad L^{p^{-}}(0,T;\big(W_0^{1,p^{-}}(\Omega)\big)^N),\\
\bfu_\delta&\rightharpoonup^\ast \bfu\quad\text{in}\quad L^\infty(0,T;\big(L^2(\Omega)\big)^N),\\
\nabla\bfu_\delta&\rightharpoonup\nabla\bfu\quad\text{in}\quad \big(L^{\px}(Q)\big)^{N\times n},\\
\delta^{1/q}\nabla\bfu_\delta&\rightarrow0\quad\text{in}\quad \big(L^{q}(Q)\big)^{N\times n},
\end{align*}
where $\bfu$ is the unique weak solution to \eqref{eq:heat}.
For ease of presentation, we neglect the $\delta$-regularization layer in the following. We leave the necessary changes (where all estimates below have to be shown uniformly in $\delta$) to the reader.\\

\subsection{Local regularity in space}
Let us fix a ball 
$B\Subset \Omega$ with radius $r_B$ and $10B\Subset\Omega$. The aim is to show that
\begin{align}\label{eq:1902a}
\begin{aligned}
\sup_{t\in (0,T)}[\bfu]^2_{W^{\tilde\alpha_x,2}(B)}&+\int_0^T[\bfF(\cdot,\cdot,\nabla\bfu)]^2_{W^{\tilde\alpha_x,2}(B)}\dt
\\&
\leq\,c_B\int_0^T\int_{10B} \big(1+|\nabla \bfu|^{sp}\big)\dxt
\end{aligned}
\end{align}
for all $\tilde\alpha_x<\alpha_x$ and some $s>1$ (close to 1).
We let $e_\gamma$ be the unit vector in the direction $x_\gamma$  for $\gamma=1,...,n$ and define $\tau^\gamma_h v(x)=v(x+he_\gamma)-v(x)$ for some $h\in\mathbb{R}/\{0\}$. We further set $\Delta^\gamma_h v(x)=(\tau^\gamma_h v(x))/h$. 
We  assume that $|h|\leq h_0<r_B$ and consider a cut-off function $\xi\in C^\infty_0(3B)$. We also assume that $\xi=1$ in $2B$, $0\leq\xi\leq1$ and $|\nabla\xi|\leq c/r_B$ for some constant $c>0$. Equation \eqref{eq:heatweak1} implies
\begin{align*}
\int_\Omega\tau^\gamma_h\bfu\cdot \bfvarphi\dx&
=
\int_\Omega\tau^\gamma_h\bfu_0\cdot \bfvarphi\dx
-
\int_0^t\int_\Omega \tau^\gamma_h\big\{\bfS(\cdot,\cdot,\nabla\bfu)\big\}:\nabla\bfvarphi\dxs
 \\
 &+
\int_0^t\int_\Omega \tau^\gamma_h\bff\cdot\bfvarphi\dxs
\end{align*}
for all $\bfphi\in C^\infty_0(3B)$. We use a spatial mollifier $(\cdot)_\varrho$ with parameter $\varrho\ll1$ and gain
\begin{align*}
\int_\Omega\big(\tau^\gamma_h\bfu\big)_\varrho\cdot \bfvarphi\dx&
=
\int_\Omega\big(\tau^\gamma_h\bfu_0\big)_\varrho\cdot \bfvarphi\dx
-
 \int_0^t\int_\Omega\Big( \tau^\gamma_h\big\{\bfS(\cdot,\cdot,\nabla\bfu)\big\}\Big)_\varrho:\nabla\bfvarphi\dxs
 \\
 &+
 \int_0^t\int_\Omega \big(\tau^\gamma_h\bff\big)_\varrho\cdot\bfvarphi\dxs
\end{align*}
for all $\bfphi\in C^\infty_0(3B)$. Since all involved terms are smooth, the fundamental lemma in the calculus of variations yields
\begin{align*}
\big(\tau^\gamma_h\bfu\big)_\varrho(t)&
=
\big(\tau^\gamma_h\bfu_0\big)_\varrho
+
 \int_0^t\Div\Big( \tau^\gamma_h\big\{\bfS(\cdot,\cdot,\nabla\bfu)\big\}\Big)_\varrho\ds
+
 \int_0^t \big(\tau^\gamma_h\bff\big)_\varrho\ds
\end{align*}
in $3B$. As the right-hand side is differentiable in time, so is the left-hand side and we obtain
\begin{align*}
\partial_t\big(\tau^\gamma_h\bfu\big)_\varrho&
=
\Div\Big( \tau^\gamma_h\big\{\bfS(\cdot,\cdot,\nabla\bfu)\big\}\Big)_\varrho
+\big(\tau^\gamma_h\bff\big)_\varrho,\quad \big(\tau^\gamma_h\bfu\big)_\varrho(0)=\big(\tau^\gamma_h\bfu_0\big)_\varrho.
\end{align*}
Multiplying by $\xi^q\big(\tau^\gamma_h\bfu\big)_\varrho$, where $q>1$ will be fixed later, and integrating over $(0,t)\times 3B$ shows
\begin{align*}
\frac{1}{2}\int_\Omega\xi^q&\big|\big(\tau^\gamma_h\bfu\big)_\varrho\big|^2\dx
+
 \int_0^t\int_\Omega\xi^q\Big( \tau^\gamma_h\big\{\bfS(\cdot,\cdot,\nabla\bfu)\big\}\Big)_\varrho:\big(\tau^\gamma_h\nabla\bfu\big)_\varrho \dxs
 \\
 &=
\frac{1}{2}\int_\Omega\xi^q\big|\big(\tau^\gamma_h\bfu_0\big)_\varrho\big|^2\dx
-q
\int_0^t\int_\Omega\xi^{q-1}\Big( \tau^\gamma_h\big\{\bfS(\cdot,\cdot,\nabla\bfu)\big\}\Big)_\varrho:\nabla\xi\otimes \big(\tau^\gamma_h\bfu\big)_\varrho \dxs
 \\
 &+
 \int_0^t\int_\Omega\xi^q \big(\tau^\gamma_h\bff\big)_\varrho\cdot\big(\tau^\gamma_h\bfu\big)_\varrho \dxs.
\end{align*}
By standard properties of the convolution we can pass to the limit $\varrho\rightarrow0$ and obtain
\begin{align}\label{eq:difweakeqn}
\begin{aligned}
\frac{1}{2}\int_\Omega\xi^q&\big|\tau^\gamma_h\bfu \big|^2\dx
+
\int_0^t\int_\Omega \xi^q \tau^\gamma_h\big\{\bfS(\cdot,\cdot,\nabla\bfu)\big\}:\tau^\gamma_h\nabla\bfu \dxs
 \\
 &=
\frac{1}{2}\int_\Omega\xi^q\big|\tau^\gamma_h\bfu_0 \big|^2\dx
-q
\int_0^t\int_\Omega\xi^{q-1} \,\tau^\gamma_h\big\{\bfS(\cdot,\cdot,\nabla\bfu)\big\}:\nabla\xi\otimes \tau^\gamma_h\bfu \dxs
 \\
 &+
\int_0^t\int_\Omega \xi^q \tau^\gamma_h\bff \cdot\tau^\gamma_h\bfu \dxs.
\end{aligned}
\end{align}
Let us consider the term
\begin{align*}
(O)
&:=
\int_0^t\int_\Omega \xi^q \tau_h^\gamma\Big\{\bfS(\sigma,x,\nabla\bfu)\Big\}:\tau^\gamma_h\nabla\bfu\dxs
\\&=
\int_0^t\int_\Omega \xi^q \Big\{\bfS(\sigma ,x+he_\gamma,\nabla\bfu(\sigma,x+he_\gamma))-\bfS(\sigma,x,\nabla\bfu(\sigma,x+he_\gamma))\Big\}:\tau^\gamma_h\nabla\bfu\dxs
\\&
+\int_0^t\int_\Omega \xi^q \Big\{\bfS(\sigma,x,\nabla\bfu(\sigma,x+he_\gamma))-\bfS(\sigma,x,\nabla\bfu(\sigma,x))\Big\}:\tau^\gamma_h\nabla\bfu\dxs
\\&
=:(O)_1+(O)_2.
\end{align*}
Notice that by choosing $h$ small enough, we have
\begin{align}\label{eq:hxS}
\begin{aligned}
|&\bfS(t,x+h e_\gamma,\bfxi)-\bfS(t,x,\bfxi)|\\&\leq c\,\vert p(x+h e_\gamma)-p(x)\vert\vert\ln (\kappa+\vert\bfxi\vert)\vert\Big\{ (\kappa+\vert\bfxi\vert)^{p(x+h e_\gamma)-2} +(\kappa+\vert\bfxi\vert)^{p(x)-2} \Big\}\vert \bfxi\vert
\\
&\leq c\,h^{\alpha_x} (\kappa+\vert\bfxi\vert)^{\frac{p(x)-2}{2}}\vert\ln (\kappa+\vert\bfxi\vert)\vert\Big\{ (\kappa+\vert\bfxi\vert)^{\frac{p(x)-2}{2}+p(x+h e_\gamma)-p(x)} +(\kappa+\vert\bfxi\vert)^{\frac{p(x)-2}{2}} \Big\}\vert \bfxi\vert
\\
%&\leq c\,h^{\alpha_x}\vert\ln (\kappa+\vert\bfxi\vert)\vert (\kappa+\vert \bfxi\vert)
%^{\frac{p(x)-2}{2}} (1+\vert\bfxi\vert)^{\frac{\max\{p(x+h e_\gamma), p(x)\}-2}{2}+1}\\
%&\leq c\,h^{\alpha_x} (\kappa+\vert\bfxi\vert)^{p(x)-1+\frac{p(s-1)}{2}}
%\\
&\leq c\,h^{\alpha_x} (\kappa+\vert\bfxi\vert)^{\frac{p(x)-1}{2}} (1+\vert\bfxi\vert)^{\frac{sp(x)-1}{2}} 
\end{aligned}
\end{align}
by the H\"older-continuity of $p$.  Here, $s>1$ can be chosen arbitrarily close to one if $h$ is small enough. Similarly, there holds
\begin{align}\label{eq:hxF}
|&\bfF(t,x+h e_\gamma,\bfxi)-\bfF(t,x,\bfxi)|
%\\&\leq c\,\vert p(x+h e_\gamma)-p(x)\vert\ln (\kappa+\vert\bfxi\vert)\Big\{ (\kappa+\vert\bfxi\vert)^{\frac{p(x+h e_\gamma)-2}{2}} +(\kappa+\vert\bfxi\vert)^{\frac{p(x)-2}{2}} \Big\}\vert \bfxi\vert
%\\
%&\leq c\,h^{\alpha_x}\ln (\kappa+\vert\bfxi\vert) (\kappa+\vert\bfxi\vert)^{\frac{\max\{p(x+h e_\gamma), p(x)\}-2}{2}} (\kappa+\vert \bfxi\vert)
%\\
\leq c\,h^{\alpha_x} (1+\vert\bfxi\vert)^{\frac{sp(x)}{2}}.
\end{align}
We use the abbreviation $f^\theta$ for a $\theta$-shift of a function $f:\Omega\rightarrow\R$ in direction $e_\gamma$, that is $f^\theta(x)=f(x+\theta e_\gamma)$ whenever it is well-defined.
Now by using the last estimate in \eqref{eq:hxS} (note that the right-hand side is increasing in $\bfxi$) as well as Young's inequality, we gain
%We have by the $\alpha_x$-H\"older continuity of $p$ that \db{more careful!}
\begin{align}
|(O)_1|
%&\leq \,c\, h^{\alpha_x }\int_0^t\int_{\Omega} \xi^q\big(\kappa+|\nabla\bfu(\sigma,x+he_\gamma)|\big)^{p-1}\ln\big(\kappa+|\nabla\bfu(\sigma,x+he_\gamma)|\big)|\tau_h^\gamma\nabla\bfu|\dxs
%\\
%&
%\leq \,c\,h^{\alpha_x }\int_0^t\int_{\Omega} \xi^q\big(\kappa+|\nabla\bfu(\sigma,x)|+|\nabla\bfu(\sigma,x+he_\gamma)|\big)^{p-1+\frac{p(s-1)}{2}}|\tau_h^\gamma\nabla\bfu|\dxs\nonumber
%\\
&
\leq\,c\,h^{\alpha_x} \int_0^t\int_{\Omega} \xi^q\big(\kappa+|\nabla\bfu^h|\big)^{\frac{p-1}{2}}\big(1+|\nabla\bfu^h|\big)^{\frac{sp-1}{2}}|\tau_h^\gamma\nabla\bfu|\dxs\nonumber\\
&
\leq\,c\,h^{\alpha_x} \int_0^t\int_{\Omega} \xi^q\big(\kappa+|\nabla\bfu|+|\nabla\bfu^h|\big)^{\frac{p-1}{2}}\big(1+|\nabla\bfu|+|\nabla\bfu^h|\big)^{\frac{sp-1}{2}}|\tau_h^\gamma\nabla\bfu|\dxs\nonumber\\
&
\leq\,c\,h^{\alpha_x} \int_0^t\int_{\Omega} \xi^q\big(\kappa+|\nabla\bfu|+|\nabla\bfu^h|\big)^{\frac{p-2}{2}}\big(1+|\nabla\bfu|+|\nabla\bfu^h|\big)^{\frac{sp}{2}}|\tau_h^\gamma\nabla\bfu|\dxs\nonumber\\
&
\leq\,\varepsilon \int_0^t\int_{\Omega} \xi^q\big(\kappa+|\nabla\bfu|+|\nabla\bfu^h|\big)^{p-2}|\tau_h^\gamma\nabla\bfu|^2\dxs\nonumber
\\&
+c(\varepsilon) \,h^{2\alpha_x}\int_0^t\int_{\Omega}
\xi^q\big(1+|\nabla\bfu|+|\nabla\bfu^h|\big)^{sp}\dxs,\label{O1}
\end{align}
%using Young's inequality for some $s$ chosen close to 1. 
where $\varepsilon>0$ is arbitrary and $p=p(\sigma,x)$ in the above as well as in subsequent computations below. 
Moreover, the following holds
\begin{align*}
(O)_2&\sim\, \int_0^t\int_{\Omega}\xi^q(\kappa+|\nabla \bfu|+|\nabla\bfu^h|)^{p-2}|\tau_h^\gamma\nabla\bfu|^2\dxs
\\
&
\geq\,\int_0^t\int_{\Omega}\xi^q|\tau_h^\gamma\bfF(\cdot,\cdot,\nabla\bfu)|^2\dxs
-
ch^{2\alpha_x}\int_0^t\int_{3B}(1+|\nabla\bfu^h|^{sp})\dxs\\
&
\geq\,\int_0^t\int_{\Omega}\xi^q|\tau_h^\gamma\bfF(\cdot,\cdot,\nabla\bfu)|^2\dxs
-
ch^{2\alpha_x}\int_0^t\int_{4B}(1+|\nabla\bfu|^{sp})\dxs
\end{align*}
using Lemma \ref{lem:hammer} as well as the estimate \eqref{eq:hxF}.
 The last term in \eqref{eq:difweakeqn} is easily controlled. We have that
\begin{align*}
\int_0^t\int_{\Omega}\xi^q\tau^\gamma_h\bff\cdot\tau^\gamma_h\bfu \dxs
&\leq h^2\,\left(\int_0^t\int_{3B} \vert\Delta^\gamma_h\bff\vert^{p'}\dxs\right)^\frac{1}{p'} \left(\int_0^t\int_{3B}\vert \Delta^\gamma_h\bfu\vert^{p}
\dxs\right)^\frac{1}{p}\leq\,ch^2
\end{align*}
recalling that $\bff\in \big(\mathcal V^{p'(\cdot)}(Q)\big)^N$ and $\bfu\in (\mathcal{V}^{p(\cdot)}(Q))^N$.
 It remains to control the second term on the right-hand side of \eqref{eq:difweakeqn} which we represent by $(I)$. We have
\begin{align*}
(I)&=-q\int_0^t\int_{\Omega} \xi^{q-1}\Big\{\bfS(\sigma,x+he_\gamma,\nabla\bfu(\sigma,x+he_\gamma))-\bfS(\sigma,x,\nabla\bfu(\sigma,x+he_\gamma))\Big\}:\nabla\xi\otimes\tau_h^\gamma\bfu\dxs
\\&
-q\int_0^t\int_{\Omega}\xi^{q-1}\Big\{\bfS(\sigma,x,\nabla\bfu(\sigma,x+he_\gamma))-\bfS(\sigma,x,\nabla\bfu(\sigma ,x))\Big\}:\nabla\xi\otimes\tau_h^\gamma\bfu\dxs
\\&
=(I)_1+(I)_2.
\end{align*}
Using \eqref{eq:hxS} we obtain
\begin{align*}
(I)_1&\leq\,c_B h^{\alpha_x+1}\int_0^t\int_{3B} \big(1+|\nabla\bfu^h|\big)^{s(p-1)}|\Delta_h^\gamma\bfu|\dxs
\\
%&
%\leq\,c_Bh^{\alpha_x+1}\int_0^t\int_{\Omega} \xi\big(\kappa+|\nabla\bfu^h|\big)^{p}\big(\ln\big(\kappa+|\nabla\bfu^h|\big)\big)^{p'}\dxs
%\\&
%+c_Bh^{\alpha_x+1}\int_0^t\int_{\Omega}\xi|\Delta_h^\gamma\bfu|^{p}\dxs
&
\leq\,c_Bh^{\alpha_x+1}\int_0^t\int_{3B} \big(1+|\nabla\bfu^h|\big)^{sp}\dxs
+c_Bh^{\alpha_x+1}\int_0^t\int_{3B}|\Delta_h^\gamma\bfu|^{p}\dxs,
%&\leq \,ch^{\alpha_x+1}\int_0^t\int_{4B}(1+|\nabla\bfu|^{sp})\dxs,
\end{align*}
where we applied Young's inequality pointwise with $p$ and $p'=p/(p-1)$. Note that $s>1$ can be chosen arbitrarily close to 1.
%\textcolor{blue}{
For the term involving the difference quotient 
we use $\Delta_h^\gamma\bfu(\sigma,x)=\int_0^1\partial_\gamma\bfu(\sigma,x+\theta he_\gamma)\,\dd\theta$ and Jensen's inequality to obtain
\begin{align}\label{eq:0203}
\begin{aligned}
\int_0^t\int_{3B}|\Delta_h^\gamma\bfu|^p\dxs&\leq\,c\,\int_0^1\int_0^t\int_{3B} \big(1+ |\nabla \bfu^{\theta h}|\big)^{p(\sigma,x)}\dxs\,\dd\theta\\
&\leq\,c\,\int_0^1\int_0^t\int_{3B} \big(1+ |\nabla \bfu^{\theta h}|\big)^{sp(\sigma,x+\theta h e_\gamma)}\dxs\,\dd\theta\\
&\leq\,c\,\int_0^t\int_{4B}\big(1+|\nabla\bfu|\big)^{ps}\dxs.
\end{aligned}
\end{align}
In order to estimate $(I)_2$ we use ideas from \cite[Lemma 12]{DieE08} based upon shifted Young functions (see appendix \ref{sec:Orlicz spaces}).
 Using the function $\varphi^{(\sigma,x)}$ and its shifted version $\varphi_a^{(\sigma,x)}$  introduced in \eqref{eq:2102} and \eqref{eq:2102b} we have that
\begin{align*}
(I)_2&\leq \,c\,h\,\int_0^t\int_{\Omega}\xi^{q-1}\Big(\varphi_{|\nabla\bfu|}^{(\sigma,x)}\Big)'(|\tau_h^\gamma\nabla\bfu|)|\nabla\xi||\Delta_h^\gamma\bfu|\dxs\\
&\leq \,c\,\dashint_0^h\int_0^t\int_{\Omega}\xi^{q-1}\Big(\varphi_{|\nabla\bfu|}^{(\sigma,x)}\Big)'(|\tau_h^\gamma\nabla\bfu|)|\nabla\xi|h|\nabla\bfu^\theta|\dxs\,\dd\theta\\
&\leq \,c\frac{h}{h_0}\,\dashint_0^h\int_0^t\int_{\Omega}\xi^{q-1}\Big(\varphi_{|\nabla\bfu^\theta|}^{(\sigma,x)}\Big)'(|\nabla\bfu^h-\nabla\bfu^\theta|)|\nabla\xi|h_0|\nabla\bfu^\theta|\dxs\,\dd\theta\\
&+ \,c\,\frac{h}{h_0}\dashint_0^h\int_0^t\int_{\Omega}\xi^{q-1}\Big(\varphi_{|\nabla\bfu^\theta|}^{(\sigma,x)}\Big)'(|\nabla\bfu-\nabla\bfu^\theta|)|\nabla\xi|h_0|\nabla\bfu^\theta|\dxs\,\dd\theta
\end{align*}
as a consequence of the last estimate from Lemma \ref{lem:hammer} and Lemma \ref{lem:shift_ch} (b). 
Using Young's inequality from Lemma \ref{lem:young} for
$\varphi^{(\sigma,x)}_{|\nabla\bfu^\theta|}$
we obtain further
\begin{align*}
(I)_2
&\leq \,\varepsilon\,\frac{h}{h_0}\dashint_0^h\int_0^t\int_{\Omega}\Big(\varphi_{|\nabla\bfu^\theta|}^{(\sigma,x)}\Big)^\ast\Big(\xi^{q-1}\Big(\varphi_{|\nabla\bfu^\theta|}^{(\sigma,x)}\Big)'(|\nabla\bfu^h-\nabla\bfu^\theta|)\Big)\dxs\,\dd\theta\\
&+ \,\varepsilon\,\frac{h}{h_0}\dashint_0^h\int_0^t\int_{\Omega}\Big(\varphi_{|\nabla\bfu^\theta|}^{(\sigma,x)}\Big)^\ast\Big(\xi^{q-1}\Big(\varphi_{|\nabla\bfu^\theta|}^{(\sigma,x)}\Big)'(|\nabla\bfu-\nabla\bfu^\theta|)\Big)\dxs\,\dd\theta\\
&+ \,c(\varepsilon)\,\frac{h}{h_0}\dashint_0^h\int_0^t\int_{\Omega}\Big(\varphi_{|\nabla\bfu^\theta|}^{(\sigma,x)}\Big)\Big(h_0|\nabla\xi||\nabla\bfu^\theta|\Big)\dxs\,\dd\theta,
\end{align*}
where $\varepsilon>0$ is arbitrary.
Choosing $q$ large enough such that (recall that $\xi\leq1$)
$$\max \{(\xi^{q-1})^{p'(\sigma,x)},(\xi^{q-1})^{2}\}\leq \max \{(\xi^{q-1})^{(p^+)'},(\xi^{q-1})^{2}\}\leq \xi^{(q-1)\min\{(p^+)',2\}}\leq \xi^q$$
we infer from Lemmas \ref{lem:shiftedindex} and \ref{lem:shifted2}
\begin{align*}
(I)_2
&\leq \,\varepsilon\,\frac{h}{h_0}\dashint_0^h\int_0^t\int_{\Omega}\xi^q\Big(\varphi_{|\nabla\bfu^\theta|}^{(\sigma,x)}\Big)^\ast\Big(\Big(\varphi_{|\nabla\bfu^\theta|}^{(\sigma,x)}\Big)'(|\nabla\bfu^h-\nabla\bfu^\theta|)\Big)\dxs\,\dd\theta\\
&+ \,\varepsilon\,\frac{h}{h_0}\dashint_0^h\int_0^t\int_{\Omega}\xi^q\Big(\varphi_{|\nabla\bfu^\theta|}^{(\sigma,x)}\Big)^\ast\Big(\Big(\varphi_{|\nabla\bfu^\theta|}^{(\sigma,x)}\Big)'(|\nabla\bfu-\nabla\bfu^\theta|)\Big)\dxs\,\dd\theta\\
&+ \,c(\varepsilon,B)\,\frac{h}{h_0}\dashint_0^h\int_0^t\int_{3B}\Big(\varphi_{|\nabla\bfu^\theta|}^{(\sigma,x)}\Big)\Big(h_0|\nabla\bfu^\theta|\Big)\dxs\,\dd\theta\\
&\leq \,\varepsilon\,\frac{h}{h_0}\dashint_0^h\int_0^t\int_{\Omega}\xi^q\varphi_{|\nabla\bfu^\theta|}^{(\sigma,x)}(|\nabla\bfu^h-\nabla\bfu^\theta|)\dxs\,\dd\theta\\
&+ \,\varepsilon\,\frac{h}{h_0}\dashint_0^h\int_0^t\int_{\Omega}\xi^q\varphi_{|\nabla\bfu^\theta|}^{(\sigma,x)}(|\nabla\bfu-\nabla\bfu^\theta|)\dxs\,\dd\theta\\
&+ \,c(\varepsilon,B)h_0^2\,\dashint_0^h\int_0^t\int_{3B}\varphi^{(\sigma,x)}\big(|\nabla\bfu^\theta|\big)\dxs\,\dd\theta\\
&=:\varepsilon\frac{h}{h_0}(I)_2^1+\varepsilon\frac{h}{h_0}(I)_2^2+c(\varepsilon,B)h_0^2(I)_2^3.
\end{align*}
As in \eqref{eq:0203} we obtain
\begin{align*}
(I)_2^3&\leq\,c\,\dashint_0^h\int_0^t\int_{3B} \big(1+ |\nabla \bfu^{\theta}|\big)^{p(\sigma,x)}\dxs\,\dd\theta\\
&\leq\,c\,\dashint_0^h\int_0^t\int_{3B} \big(1+ |\nabla \bfu^{\theta}|\big)^{sp(\sigma,x+\theta e_\gamma)}\dxs\,\dd\theta\\
&\leq\,c\,\int_0^t\int_{4B}\big(1+|\nabla\bfu|^{ps}\big)\dxs.
\end{align*}
Furthermore, there holds by Lemma \ref{lem:hammer} and \eqref{eq:hxF}
\begin{align*}
(I)_2^2&\leq \,c\dashint_0^h\int_0^t\int_{\Omega}\xi^q|\bfF(\cdot,\cdot,\nabla\bfu)-\bfF(\cdot,\cdot,\nabla\bfu^\theta)|^2\dxs\,\dd\theta\\
&\leq \,c\dashint_0^h\int_0^t\int_{\Omega}\xi^q|\tau_\theta^\gamma\bfF(\cdot,\cdot,\nabla\bfu)|^2\dxs\,\dd\theta+c\,h_0^{2\alpha_x}\,\int_0^t\int_{4B}\big(1+|\nabla\bfu|^{ps}\big)\dxs
\end{align*}
as well as
\begin{align*}
(I)_2^1&\leq \,c\dashint_0^h\int_0^t\int_{\Omega}\xi^q|\bfF(\cdot,\cdot,\nabla\bfu^h)-\bfF(\cdot,\cdot,\nabla\bfu^\theta)|^2\dxs\,\dd\theta\\
&\leq \,c\int_0^t\int_{\Omega}\xi^q|\bfF(\cdot,\cdot,\nabla\bfu^h)-\bfF(\cdot,\cdot,\nabla\bfu)|^2\dxs\\&+\,c\dashint_0^h\int_0^t\int_{\Omega}\xi^q|\bfF(\cdot,\cdot,\nabla\bfu)-\bfF(\cdot,\cdot,\nabla\bfu^\theta)|^2\dxs\,\dd\theta\\
&\leq \,c\int_0^t\int_{\Omega}\xi^q|\bfF(\cdot,\cdot,\nabla\bfu^h)-\bfF(\cdot,\cdot,\nabla\bfu)|^2\dxs+\,c\dashint_0^h\int_0^t\int_{3B}|\tau_{\theta}^\gamma\bfF(\cdot,\cdot,\nabla\bfu)|^2\dxs\,\dd\theta\\&+c\,h_0^{2\alpha_x}\,\int_0^t\int_{4B}\big(1+|\nabla\bfu|^{ps}\big)\dxs.
%&\leq \,c\dashint_0^h\int_0^t\int_{4B}|\tau_{\theta}^\gamma\bfF(\cdot,\cdot,\nabla\bfu)|^2\dxs\,\dd%\theta+c\,h_0^{2\alpha_x}\,\int_0^t\int_{4B}\big(1+|\nabla\bfu|^{ps}\big)\dxs.
\end{align*}
Note that by Lemma \ref{lem:hammer} the first term on the right-hand side is proportional to $(\mathcal O)_2$ and can consequently be absorbed.
Plugging all together and choosing $\varepsilon$ small enough,  we conclude
\begin{align}\label{eq:1902aaa}
\begin{aligned}
&\sup_{t\in [0,T]}\int_{2B}|\tau^\gamma_h\bfu|^2\dx+\int_0^T\int_{2B}|\tau_h^\gamma\bfF(\cdot,\cdot,\nabla\bfu)|^2\dxt
\\&
\,\,\leq\frac{1}{2}\frac{h}{h_0}\dashint_0^h\int_0^T\int_{3B}|\tau_{\theta}^\gamma\bfF(\cdot,\cdot,\nabla\bfu)|^2\dxt\,\dd\theta+\,c_B\,h_0^{2\alpha_x}\int_0^T\int_{4B} \big(1+ |\nabla \bfu|^{sp}\big)\dxt
\end{aligned}
\end{align}
as well as
\begin{align}\label{eq:1902aab}
\int_0^T\int_{2B}|\tau_h^\gamma\bfF(\cdot,\cdot,\nabla\bfu)|^2\dxt
&
\leq\frac{1}{2}\frac{h}{h_0}\dashint_0^h\int_0^T\int_{4B}|\tau_{\theta}^\gamma\bfF(\cdot,\cdot,\nabla\bfu)|^2\dxt\,\dd\theta\\&+\,c_B\,h_0^{2\alpha_x}\int_0^T\int_{4B} \big(1+ |\nabla \bfu|^{sp}\big)\dxt,\nonumber\\
\label{eq:1902aac}
\dashint_0^{h_0}\int_0^T\int_{2B}|\tau_h^\gamma\bfF(\cdot,\cdot,\nabla\bfu)|^2\dxt\,\mathrm{d}h
&
\leq\frac{1}{2}\dashint_0^{h_0}\int_0^T\int_{4B}|\tau_{h}^\gamma\bfF(\cdot,\cdot,\nabla\bfu)|^2\dxt\,\dd h\\&+\,c_B\,h_0^{2\alpha_x}\int_0^T\int_{4B} \big(1+ |\nabla \bfu|^{sp}\big)\dxt.\nonumber
\end{align}
Using \cite[Lemma 13]{DieE08} we can get rid of the $\frac{1}{2}$-term on the right-hand side of \eqref{eq:1902aac} as well as the additional $h$-integral (by slightly reducing the radius) to gain
\begin{align*}
\int_0^T\int_{\frac{8}{5}B}|\tau_h^\gamma\bfF(\cdot,\cdot,\nabla\bfu)|^2\dxt
&
\leq\,c_B\,h_0^{2\alpha_x}\int_0^T\int_{4B} \big(1+ |\nabla \bfu|^{sp}\big)\dxt.
\end{align*}
Using this inequality in \eqref{eq:1902aac} we finally obtain
\begin{align}\label{eq:1902aa}
\begin{aligned}
\sup_{t\in [0,T]}\int_{2B}&|\tau^\gamma_h\bfu|^2\dx+\int_0^T\int_{2B}|\tau_h^\gamma\bfF(\cdot,\cdot,\nabla\bfu)|^2\dxt
\\&
\leq\,c_B\,h^{2\alpha_x}\int_0^T\int_{10B} \big(1+ |\nabla \bfu|^{sp}\big)\dxt.
\end{aligned}
\end{align}
The final estimate \eqref{eq:1902a} now follows  from Lemma \ref{le2} by using $\bfF(\cdot,\cdot,\nabla\bfu)\in \big(L^2(Q)\big)^{N\times n}$.\\\

\subsection{Boundary regularity in space}
We will show an estimate in the spirit of \eqref{eq:1902a} for boundary points.
This will be done by 
flattening the boundary (introducing local coordinates) and 
reflecting the solution on the boundary (which requires zero boundary data). This method was originally introduced in \cite{ChenDiBe}. We follow the approach from
\cite{BCDS}, to which we refer for further details. The approach from
\cite{BCDS} is in turn inspired by
\cite{BuDiSc}.\\
Without loss of generality, we assume that $0$ is a boundary point and the outer normal of $\Omega$ at $0$ is $(0',-1)^t$. To avoid confusion, we have denoted the zero vectors in $\setR^n$ and $\setR^{n-1}$ by $0$ and $0'$ respectively.
Now, for $R>0$ small enough and $B_{R}(0') \subset \R^{n-1}$, there is a
local coordinate map $\psi:B_{R}(0')\to \setR$ of class $C^{1,\alpha_x}$ such that 
\begin{align*}
\partial\Omega\cap B_R(0)&\subset\set{(x',\psi(x'))\,|\,x'\in B_{R}(0')},\\  \Omega\cap B_R(0)&\subset \set{(x',x_n)\in B_{R}(0)\,:\,x_n>\psi(x')},
\end{align*}
where we have used the abbreviation $(x_1,\ldots,x_{n-1}, x_n):= (x',x_n)$. Without loss of generality we assume that  $\psi(0)=0$ and $\nabla\psi(0)=0$.
 We further define 
\begin{align*}
\Psi:\overline{\Omega}\cap B_{R}(0) &\to  \set{(y',y_n)\in \setR^n\,:\,y_n\geq 0}
\\
\Psi(x',x_n)&=(x',x_n-\psi(x')).
\end{align*}
We notice that $\Psi(x)\in \set{(y',y_n):y_n=0}$ for $x\in\partial\Omega$.  Since we assume that $\Omega$ has a Lipschitz boundary, $\Psi$ has a well defined inverse $\Psi^{-1}$ which is  Lipschitz continuous provided $R$ is small enough. 
%In the following we will use the convention (as is standard in the notion of elliptic PDE) that the gardient of a function is a column vector; $\nabla g=(\partial_1 g,...,\partial_n g)$.
Finally, we set
$$\bfJ(x) = \nabla \Psi (x) \quad \hbox{for $x \in \overline{\Omega}\cap B_{R}(0)$.}$$\\
%
%
%
%
% We fix 
% $\bfJ_{ij}:=\partial_i\Psi^j$, henc
%------------------------------------------------------
%Thus,
%\begin{align}
% \bfJ=
%  \begin{pmatrix}
%\bfI&0
%\\
%-\nabla \psi& 1
%\end{pmatrix}
%=
% \begin{pmatrix}
%1&0&\ldots&0
%\\
%0&\ddots&&\vdots
%\\
% \vdots&&1&0
%%\\
%% \vdots&\vdots&&1&0
% \\
% -\partial_1\psi%&-\partial_2\psi
% &\ldots&-\partial_{n-1}\psi&1
%\end{pmatrix}.
% \end{align} 
%Moreover, $\bfJ^{-1} : \Psi( \overline{\Omega}\cap B_R(0))\to \setR ^{n\times n}$ is given by
%$$\bfJ^{-1}(y) =  \nabla_y \Psi^{-1} (y) \quad \hbox{for $y \in \psi(\overline{\Omega}\cap B_R(0))$.}$$
%------------------------------------------------------------\\
Clearly, we have $\det(\bfJ)\equiv 1$ and $\det(\bfJ^{-1})\equiv 1$. This means that the mappings $\Psi$ and $\Psi^{-1}$ are volume preserving.
%Hence $\abs{\bfPsi(E)}=\abs{E}=\abs{\bfPsi^{-1}(E)}$ for every measurable subset of the domain of $\bfPsi$ or resp. $\bfPsi^{-1}$.
 Moreover, due to the Lipschitz property of the boundary, we may assume that there are constants $0<\lambda\leq 1\leq \Lambda$ such that
\begin{equation}\label{lL}
\lambda\leq |\bfJ(x)|\leq \Lambda
\end{equation}
uniformly in $x$ provided $R$ is small enough.
This implies, in particular, that
 \begin{align}\label{inclusion1}
 & B_{\lambda r}^+(0)\subset \Psi(\overline\Omega\cap B_r(0)) \subset B_{\Lambda r}^+(0),
 \\ \label{inclusion2}
 &  \overline\Omega\cap B_{\frac r\Lambda }(0)\subset \Psi^{-1}(B_r^+(0)) \subset \overline\Omega\cap B_{\frac{r}{\lambda}}(0),
 \end{align}
for $0\leq r\leq R$.
Here we denoted for a given $\varrho>0$
\begin{align*}
B^+_\varrho(0) &:= B_\varrho(0) \cap \{ (x',x_n) \,:\, x_n \geq 0\},\\
B^-_\varrho(0) &:= B_\varrho(0) \cap \{ (x',x_n) \,:\, x_n \leq 0\}.
\end{align*}
%Notice that in view of the definition of $\Psi$, we require $r \leq R$ in \eqref{inclusion1} and $r \leq \lambda R$ in \eqref{inclusion2}.\\
We now set $y=\Psi(x)$ and for a given function $\bfg:\overline{\Omega}\cap B_R(0)\to \setR^N$ we define $\tilde{\bfg}: \Psi (\overline{\Omega}\cap B_R(0)) \to \overline{\Omega}\cap B_R(0)$ by
$$\tilde{\bfg}(y)=\bfg (\Psi^{-1}(y)) \quad \hbox{for $y \in \Psi (\overline{\Omega}\cap B_R(0))$}.$$
Hence, we obtain
\[
\nabla \tilde{\bfg}(y)=\nabla\bfg(\Psi^{-1}(y))\bfJ^{-1}(\Psi^{-1}(y)) \quad \hbox{for $y \in \Psi (\overline{\Omega}\cap B_R(0))$}
\]
and
\[
 \nabla\bfg(x)=\nabla\tilde{\bfg}(\Psi(x))\bfJ(x) \quad \hbox{for $x \in \overline{\Omega}\cap B_R(0)$} %\text{ and }\divergence_x \bfg(x)=\divergence_y(\bfJ^t\circ\Psi^{-1}\tilde{\bfg})(y),
\]
for any differentiable function $\bfg$. 
Applying this change of variable to the solution $\bfu$ of \eqref{eq:heat}
we obtain the system (to be understood in the weak sense, cf. Definition \ref{def:weak})
\begin{align}
\label{eq:boundaryflat}
\partial_t \tilde\bfu=\Div\tilde\bfS(\nabla\tilde\bfu)+\tilde\bff
\end{align}
 in $(0,T)\times B_{\lambda R}^+(0)$ with initial datum $\tilde\bfu_0$, where 
\begin{align*}
\tilde\bfS(t,x,\bfeta)&=\bfS(t,\Psi^{-1}(x),\bfeta\bfJ(\Psi^{-1}(x)))\bfJ^t(\Psi^{-1}(x)),\\
\tilde\bff(t,x)&=\bff(t,\Psi^{-1}(x)),\quad  \tilde\bfu_0(x)=\bfu_0(\Psi^{-1}(x)),
\end{align*}
recall \eqref{inclusion1}. We claim that the operator $\tilde\bfS$ satisfies 
\eqref{growth1} and \eqref{growth2} (with $p$ replaced by $\tilde p(t,x)=p(t,\Psi^{-1}(x))$ as long as $\bfS$ does. In fact, we have
\begin{align*}
D_\bfeta\tilde\bfS(t,x,\bfeta)(\bfxi,\bfxi)&=D_\bfeta\bfS(t,\Psi^{-1}(x),\bfeta\bfJ(\Psi^{-1}(x)))(\bfJ(\Psi^{-1}(x))\bfxi,\bfJ(\Psi^{-1}(x))\bfxi)\\&\sim (\kappa+|\bfeta\bfJ(\Psi^{-1}(x))|)^{\tilde p-2}|\bfJ(\Psi^{-1}(x))\bfxi|^2\\
&\sim (\kappa+|\bfeta|)^{\tilde p-2}|\bfxi|^2
\end{align*}
using \eqref{growth1} as well as \eqref{lL}.
Moreover, there holds
\begin{align*}
&\tilde\bfS(t,x,\bfeta)-\tilde\bfS(t,y,\bfeta)\\&=\bfS(t,\Psi^{-1}(x),\bfeta\bfJ(\Psi^{-1}(x)))\bfJ^t(\Psi^{-1}(x))-\bfS(t,\Psi^{-1}(y),\bfeta\bfJ(\Psi^{-1}(y)))\bfJ^t(\Psi^{-1}(y))\\
&=\Big[\bfS(t,\Psi^{-1}(x),\bfeta\bfJ(\Psi^{-1}(x)))-\bfS(t,\Psi^{-1}(y),\bfeta\bfJ(\Psi^{-1}(x)))\Big]\bfJ^t(\Psi^{-1}(x))\\
&+\Big[\bfS(t,\Psi^{-1}(y),\bfeta\bfJ(\Psi^{-1}(x)))-\bfS(t,\Psi^{-1}(y),\bfeta\bfJ(\Psi^{-1}(y)))\Big]\bfJ^t(\Psi^{-1}(x))\\
&+\bfS(t,\Psi^{-1}(y),\bfeta\bfJ(\Psi^{-1}(y)))\Big[\bfJ^t(\Psi^{-1}(x))-\bfJ^t(\Psi^{-1}(y))\Big]\\
&=(I)+(II)+(III).
\end{align*}
We obtain, using \eqref{growth2} for $\bfS$ as well as Lipschitz continuity of $\Psi^{-1}$ and \eqref{lL}, that
\begin{align*}
(I)&\leq\,c\,|\Psi^{-1}(x)-\Psi^{-1}(y)|^{\alpha_x}\big(1+|\ln(\kappa+|\bfeta\bfJ(\Psi^{-1}(x))|)|\big) 
\\&\times\Big( (\kappa+|\bfeta\bfJ(\Psi^{-1}(x))|)^{\tilde p(t,x)-2} + (\kappa+|\bfeta\bfJ(\Psi^{-1}(x))|)^{\tilde p(t,y)-2} \Big) |\bfeta\bfJ(\Psi^{-1}(x))|.
%&\leq\,c\,|x-y|^{\alpha_x}\big(1+|\ln(\kappa+|\bfeta|)|\big) \Big( (\kappa+|\bfeta|)^{\tilde p(t,x)-2} + (\kappa+|\bfeta|)^{\tilde p(t,y)-2} \Big) |\bfeta|.
\end{align*}
In the case $\kappa+|\bfeta\bfJ(\Psi^{-1}(x))|\leq 1$, we argue by the elementary inequalities
\begin{align*}
|\ln(\kappa+|\bfeta\bfJ(\Psi^{-1}(x))|)|&=-\ln(\kappa+|\bfeta\bfJ(\Psi^{-1}(x))|)\leq -\ln(\lambda(\kappa+|\bfeta|))\\&=-\ln \lambda-\ln(\kappa+|\bfeta|)\leq c\,(1+ |\ln(\kappa+|\bfeta|)|),
\end{align*}
which follow from \eqref{lL}.
Otherwise, we have
\begin{align*}
|\ln(\kappa+|\bfeta\bfJ(\Psi^{-1}(x))|)|&=\ln(\kappa+|\bfeta\bfJ(\Psi^{-1}(x))|)\leq \ln(\Lambda(\kappa+|\bfeta|))\\&\leq\,c(1+ |\ln(\kappa+|\bfeta|)|),
\end{align*}
which is again a consequence of \eqref{lL}.
Combining both cases yields
\begin{align*}
(I)
&\leq\,c\,|x-y|^{\alpha_x}\big(1+|\ln(\kappa+|\bfeta|)|\big) \Big( (\kappa+|\bfeta|)^{\tilde p(t,x)-2} + (\kappa+|\bfeta|)^{\tilde p(t,y)-2} \Big) |\bfeta|.
%&\leq\,c\,|x-y|^{\alpha_x}\big(1+|\ln(\kappa+|\bfeta|)|\big)( 1+|\bfeta|)^{\beta'} ( \kappa+|\bfeta|)^{\tilde p(t,x)-2} |\bfeta|
\end{align*}
using again Lipschitz continuity of $\Psi^{-1}$ as well as  \eqref{lL}.
%arguing as in \eqref{smesht1}.
By Lemma \ref{lem:hammer} we obtain
\begin{align*}
(II)&\leq \,c\,\big(\kappa+|\bfeta(\bfJ(\Psi^{-1}(x))-\bfJ(\Psi^{-1}(y)))|+|\bfeta\bfJ(\Psi^{-1}(x))|\big)^{\tilde p(t,y)-2}\\&\qquad\qquad\qquad\times|\bfeta(\bfJ(\Psi^{-1}(x))-\bfJ(\Psi^{-1}(y)))|\\&\leq\,c\,|x-y|^{\alpha_x}\big(\kappa+|\bfeta|\big)^{\tilde p(t,y)-2}|\bfeta|
\end{align*}
by H\"older-continuity of $\bfJ\circ\Psi^{-1}$.
Finally, we have
\begin{align*}
(III)\leq\,c\,|x-y|^{\alpha_x}\big(\kappa+|\bfeta\bfJ(\Psi^{-1}(x))|\big)^{\tilde p-2}|\bfeta\bfJ(\Psi^{-1}(x))|\leq\,c\,|x-y|^{\alpha_x}\big(\kappa+|\bfeta|\big)^{\tilde p-2}|\bfeta|
\end{align*}
using again H\"older-continuity of $\bfJ\circ\Psi^{-1}$ and \eqref{lL}.
% Lipschitz continuity of $\Psi^{-1}$ and boundedness of $\bfJ$
%from below
Combining the estimates above shows
that 
\begin{align}\label{eq:1610}
\text{the operator }\tilde\bfS \text{ satisfies 
\eqref{growth1} and \eqref{growth2} }
\end{align}
with $p$ replaced by $\tilde p(t,x)=p(t,\Psi^{-1}(x))$.\\
Now we reflect the problem at the flat boundary.
We introduce the reflection matrix $\bfR\in \setR^{N\times n}$, $\bfR:=\text{diag}(1,...,1,-1)$.
Now we extend the solution by setting
\begin{align*}
\overline\bfu(t,x)&:=\left\{\begin{aligned}
&\tilde\bfu(t,x) &&\textrm{for } (x',x_n)\in B_{\lambda R}(0)\,:\,x_n\geq 0,\\
&-\tilde\bfu(t,x',-x_n) &&\textrm{for }(x',x_n)\in B_{\lambda R}(0)\,:\,x_n<0,
\end{aligned}
\right.
\end{align*}
which implies
\begin{align*}
\nabla \overline\bfu(t,x)&:=\left\{\begin{aligned}
\nabla\tilde\bfu(t,x) &&\textrm{for } (x',x_n)\in B_{\lambda R}(0)\,:\,x_n\geq 0,\\
-\nabla \tilde\bfu(t,x',-x_n) \bfR &&\textrm{for }(x',x_n)\in B_{\lambda R}(0)\,:\,x_n<0.
\end{aligned}
\right.
\end{align*}
Similarly, we define $\overline{\bff}$ and $\overline{\bfu}_0$.
Finally, we reflect the elliptic part of our system by setting
%\begin{align*}
%\bfG(x,h)&:=\left\{
%\begin{aligned}
%\widehat{\bfF}(x',h)\bfT_1^{t}\bfQ^t-\bfc_1 &&\textrm{for } \set{(x',h)\in B_{\lambda r}(0)\,:\,h\geq 0},
%\\
%\widehat{\bfF}\bfT_1^{t}\bfQ^t(x',-h)-\bfc_1)\bfR  &&\textrm{for } \set{(x',h)\in B_{\lambda r}(0)\,:\,h<0},
%\end{aligned}
%\right,
%\end{align*}
\begin{align*}
\overline\bfS(t,x,\bfxi)&:=\left\{\begin{aligned}
\tilde\bfS(t,x,\bfxi) &&\textrm{for } (x',x_n)\in B_{\lambda R}(0)\,:\,x_n\geq 0,
\\
\tilde\bfS(t,(x',-x_n),\bfxi)   &&\textrm{for } (x',x_n)\in B_{\lambda R}(0)\,:\,x_n<0.
\end{aligned}
\right.
\end{align*}
%and
%\begin{align*}
%\overline\bff(x)&:=\left\{\begin{aligned}
%\tilde\bff &&\textrm{for } \set{(x',h)\in B_{\lambda r}(0)\,:\,h\geq 0},
%\\
%-\tilde\bff(x',-h)  &&\textrm{for } \set{(x',h)\in B_{\lambda r}(0)\,:\,h<0},
%\end{aligned}
%\right.
%\end{align*}
We claim that the function $\overline\bfu$ is a weak solution to the system
\begin{align}
\label{eq:boundaryfinal}
\partial_t\overline\bfu=\divergence(\overline\bfS(\nabla \overline\bfu))+\overline\bff\text{ in }B_{\lambda R}(0)
\end{align}
with initial datum $\overline\bfu_0$, cf. Definition \ref{def:weak}.
%Firstly, clearly $\overline\bfu\in W^{1,p}(B_{\lambda r}(0))$. 
%Secondly, we have the following symmetries
%\begin{align*}
%\nabla \bfg(x',h)&=-\bfR_N\nabla \bfg(x',-h)\text{ if }\bfg(x',h)=-\bfg(x',-h)
%\\
%\nabla \bfg(x',h)&=\bfR_N\nabla \bfg(x',-h)\text{ if }\bfg(x',h)=\bfg(x',-h).
%\end{align*}
For a given $\bfphi\in C^{0,1}_0(B_{\lambda R})$, we may split
 \[
 \bfphi(x',x_n)=\frac{\bfphi(x',x_n)+\bfphi(x',-x_n)}{2}+\frac{\bfphi(x',x_n)-\bfphi(x',-x_n)}{2}=:\bfphi_1(x',x_n)+\bfphi_2(x',x_n).
 \]
 This symmetry implies that
 \[
 \nabla \bfphi_1(x',x_n)= \nabla \bfphi_1(x',-x_n)\bfR\text{ and } \nabla \bfphi_2(x',x_n)= -\nabla \bfphi_2(x',-x_n)\bfR
 \]
and that $\bfphi_2\in C^{0,1}_0(B^+_{\lambda R})$. By a simple change of coordinates on the domain $B^-_{\lambda R}$ we obtain
%------------------------------------------------------------------
% we calculate\db{I don't see why $\bfphi_2=0$ in $B_{\lambda r}^-$}
%\begin{align*}
%&\int_{B_{\lambda r}}\overline\bfS(t,x,\nabla \overline\bfu)\cdot \nabla \bfphi\dx
%\\
%&\quad
% =\int_{B_{\lambda r}^+} \overline\bfS(t,x,\nabla \overline\bfu) \cdot \nabla \bfphi_1\dx
%+
%\int_{B_{\lambda r}^-} \overline\bfS(t,x,\nabla \overline\bfu)\cdot \nabla \bfphi_1\dx
%\\
%&\qquad +\int_{B_{\lambda r}^+} \overline\bfS(t,x,\nabla \overline\bfu)\cdot\nabla \bfphi_2\dx
%+\int_{B_{\lambda r}^-} \overline\bfS(t,x,\nabla \overline\bfu)\cdot\nabla \bfphi_2\dx=(I)+(II)+(III)+(IV).
%\end{align*}
%Now, we compute
%\begin{align*}
%(II)&=-\int_{B_{\lambda r}^-} \tilde\bfS(t,\bfR x,\nabla \tilde\bfu(\bfR x))\bfR\cdot \nabla \bfphi_1(\bfR x)\bfR\dx\\
%&=-\int_{B_{\lambda r}^-} \tilde\bfS(t,\bfR x,\nabla \tilde\bfu(\bfR x))\cdot \nabla \bfphi_1(\bfR x)\dx\\
%&=-\int_{B_{\lambda r}^+} \tilde\bfS(t,x,\nabla \tilde\bfu)\cdot \nabla \bfphi_1(x)\dx=-(I).
%\end{align*}
%Similarly, we can show $(IV)=(III)$ such that
%\begin{align*}
%\int_{B_{\lambda r}}\overline\bfS(t,x,\nabla \overline\bfu)\cdot \nabla \bfphi\dx=
%2\int_{B_{\lambda r}^+} \tilde\bfS(t,x,\nabla \tilde\bfu)\cdot\nabla \bfphi_2\dx.
%\end{align*}
%Along similar lines we obtain
%\begin{align*}
%\int_{B_{\lambda r}}\overline\bfu\cdot  \bfphi\dx&=
%2\int_{B_{\lambda r}^+} \tilde\bfu\cdot\bfphi_2\dx,\\
%\int_{B_{\lambda r}}\overline\bff\cdot  \bfphi\dx&=
%2\int_{B_{\lambda r}^+} \tilde\bff\cdot\bfphi_2\dx.
%\end{align*}
%%-----------------------------------------------------
\begin{align*}
&\int_{B_{\lambda R}}\big(\overline\bfu-\overline\bfu_0\big)\cdot\bfvarphi\dx+\int_0^t\int_{B_{\lambda R}} \overline\bfS(\sigma,x,\nabla\overline\bfu):\nabla\bfvarphi\dxs
-\int_0^t\int_{B_{\lambda R}} \overline\bff  \cdot\bfvarphi\dxs\\
&=2\int_{B_{\lambda R}^+}\big(\tilde\bfu-\tilde\bfu_0\big)\cdot\bfvarphi_2\dx+2\int_0^t\int_{B_{\lambda R}^+} \tilde\bfS(\sigma,x,\nabla\tilde\bfu):\nabla\bfvarphi_2\dxs
-2\int_0^t\int_{B_{\lambda R}^+} \tilde\bff  \cdot\bfvarphi_2\dxs=0
\end{align*}
using \eqref{eq:boundaryflat} and $\bfphi_2\in C^{0,1}_0(B^+_{\lambda R})$. That is, we have \eqref{eq:boundaryfinal}.
%\begin{align}
%\label{eq:boundaryfinal}
%\partial_t \hat\bfu=\Div\hat\bfS(\nabla\hat\bfu)+\hat\bff
%\end{align}
%in $(0,T)\times B_{r_0}^+$. 
%\begin{align*}
%\hat\bff(t,x)=\begin{cases}
%\bff(t,x)&\quad\text{if}\,\,x\in B_{r_0}^+,\\
%-\bff(t,\mathcal R^{-1}x)&\quad\text{if}\,\,x\in B_{r_0}^-,\end{cases}
%\end{align*}
%% $\hat\bff(t,x)=\bff(t,\mathcal R^{-1}x)$ 
% and
% \begin{align*}
% B_{r_0}:= B_{r_0}^+ \cup B_{r_0}^-\cup
% \{(x',x_n):\,\,|x'|<\ell_1,\,\,\psi(x')=x_n\}.
% \end{align*}
Although the new system \eqref{eq:boundaryfinal} is not exactly the $p(t,x)$-Laplacian, the growth conditions \eqref{growth1} and \eqref{growth2}
 still hold with $p$ replaced by
\begin{align*}
\overline p(t,x)&:=\left\{\begin{aligned}
&\tilde p(t,x) &&\textrm{for } (x',x_n)\in B_{\lambda R}(0)\,:\,x_n\geq 0,\\
&\tilde p(t,(x',-x_n)) &&\textrm{for }(x',x_n)\in B_{\lambda R}(0)\,:\,x_n<0.
\end{aligned}
\right.
\end{align*}
This is obvious as far as \eqref{growth1} is concerned. To verify \eqref{growth2}
we only have to check the case $x\in B_{\lambda R}^+$ and $y\in B_{\lambda R}^-$, recall \eqref{eq:1610}. 
%In the latter case there is a vector $z=(z',0)\in \R^n$ on the segment between $x$ and $y$. It satisfies $|x-y|=|x-z|+|z-y|$ and hence
By definition of $\overline\bfS$ and \eqref{eq:1610} we obtain
\begin{align*}
&|\overline\bfS(t,x,\bfeta)-\overline\bfS(t,y,\bfeta)|=\,|\tilde\bfS(t,x,\bfeta)-\tilde\bfS(t,(y',-y_n),\bfeta)| \\
%&\quad\leq \,c\,|x-z|^{\alpha_x}\big(1+|\ln(\kappa+|\bfeta|)|\big)(1 + \vert \bfxi \vert)^{\beta'} (\kappa+|\bfeta|)^{\tilde p(t,x)-2} |\bfeta|\\
%&\quad+ \,c\,|z-y|^{\alpha_x}\big(1+|\ln(\kappa+|\bfeta|)|\big)(1 + \vert \bfxi \vert)^{\beta'} (\kappa+|\bfeta|)^{\tilde p(t,(y,-y_n)-2} |\bfeta|\\
&\leq \,c\,|(x',x_n)-(y',-y_n)|^{\alpha_x}\big(1+|\ln(\kappa+|\bfeta|)|\big)\Big( (\kappa+|\bfeta|)^{\tilde p(t,x)-2}+(\kappa+|\bfeta|)^{\tilde p(t,(y',-y_n))-2}\Big) |\bfeta|\\
&\leq \,c\,|x-y|^{\alpha_x}\big(1+|\ln(\kappa+|\bfeta|)|\big)\Big( (\kappa+|\bfeta|)^{\overline p(t,x)-2}+(\kappa+|\bfeta|)^{\overline p(t,y)-2}\Big) |\bfeta|.
%&\quad\leq \,c\,|x-y|^{\alpha_x}\big(1+|\ln(\kappa+|\bfeta|)|\big)(1 + \vert \bfxi \vert)^{2\beta'} (\kappa+|\bfeta|)^{p(t,x)-2} |\bfeta|
\end{align*}
%arguing again as in \eqref{smesht1}.
% and $\overline{\bfS}$, and the assumption of a $C^{1,\alpha}$-boundary which implies $\Psi^{-1},\bfJ\circ\Psi^{-1}\in C^{0,\alpha}$.
 Furthermore, due to the assumptions on the boundary, the regularity of the right-hand side and the initial datum remain (i.e., we have $\overline\bff\in \mathcal V^{\overline p'(\cdot)}$ because of the zero boundary conditions). Consequently, the interior regularity theory holds, in particular \eqref{eq:1902a} applies to \eqref{eq:boundaryfinal}, i.e., by setting
$
\overline\bfF(t,x,\bfxi)=(\kappa+|\bfxi|)^{\frac{\overline p(t,x)-2}{2}}\bfxi
$
we obtain 
\begin{align}\label{eq:1902aboundary}
\begin{aligned}
\sup_{t\in [0,T]}[\overline\bfu]^2_{W^{\tilde\alpha_x,2}(B_\rho)}&+\int_0^T[\overline\bfF(\cdot,\cdot,\nabla\overline\bfu)]^2_{W^{\tilde\alpha_x,2}(B_\rho)}\dt
\\&
\leq\,c_B\int_0^T\int_{10B_\rho} \big(1+|\nabla \overline\bfu|^{sp}\big)\dxt
\end{aligned}
\end{align}
for all $\tilde\alpha_x<\alpha_x$ provided $\rho$ is small enough say, $\rho \leq \lambda R/10$ . Consequently, we also have
\begin{align}\label{eq:1902aboundary'}
\begin{aligned}
\sup_{t\in [0,T]}[\tilde\bfu]^2_{W^{\tilde\alpha_x,2}(B_\rho^+)}&+\int_0^T[\tilde\bfF(\cdot,\cdot,\nabla\tilde\bfu)]^2_{W^{\tilde\alpha_x,2}(B_\rho^+)}\dt
\\&
\leq\,c_B\int_0^T\int_{10B_\rho^+} \big(1+|\nabla \tilde\bfu|^{sp}\big)\dxt
\bigg],
\end{aligned}
\end{align}
where $
\tilde\bfF(t,x,\bfxi)=(\kappa+|\bfxi|)^{\frac{\tilde p(t,x)-2}{2}}\bfxi
$. We would now like to derive a similar estimate for $\bfu$.
First of all, \eqref{eq:1902aboundary'} implies, by the use of \eqref{inclusion2} and the Lipschitz continuity of $\Psi^{-1}$, that
\begin{align*}
&[\tilde\bfF(\cdot,\cdot,\nabla\tilde\bfu)]^2_{W^{\tilde\alpha_x,2}(B_\rho^+)}=\int_{B_\rho^+}\int_{B_\rho^+}\frac{|\tilde\bfF(\cdot,y,\nabla\tilde\bfu(y))-\tilde\bfF(\cdot,x,\nabla\tilde\bfu(x))|^2}{|y-x|^{n+2\tilde\alpha_x }}\,\dd x\,\dd y\\
&=\int_{\Psi^{-1}(B_\rho^+)}\int_{\Psi^{-1}(B_\rho^+)}\frac{\big|\bfF\big(\cdot,y,\nabla\bfu(y)\bfJ^{-1}(y) \big) - \bfF \big(\cdot,x,\nabla\bfu(x)\bfJ^{-1}(x) \big) \big|^2}{|\Psi(y)-\Psi(x)|^{n+2\tilde\alpha_x }}\,\dd x\,\dd y\\
&\geq\,c\,\int_{B_{\varrho}\cap\Omega}\int_{B_{\varrho}\cap\Omega}\frac{ \big|\bfF\big(\cdot,y,\nabla\bfu(y)\bfJ^{-1}(y) \big) - \bfF\big(\cdot,x,\nabla\bfu(x)\bfJ^{-1}(x) \big) \big|^2}{|y-x|^{n+2\tilde\alpha_x }}\,\dd x\,\dd y
%&-c\,\int_{B_\rho\cap\Omega}\int_{B_\rho\cap\Omega}\frac{|\bfF(\cdot,y,\nabla\bfu(y)\bfJ^{-1}\circ\Psi(y))-\bfF(\cdot,y,\nabla\bfu(y))|^2}{|y-x|^{n+2\tilde\alpha_x }}\,\dd x\,\dd y\\
%&-c\,\int_{B_\rho\cap\Omega}\int_{B_\rho\cap\Omega}\frac{|\bfF(\cdot,x,\nabla\bfu(x)\bfJ^{-1}\circ\Psi(x))-\bfF(\cdot,x,\nabla\bfu(x))|^2}{|y-x|^{n+2\tilde\alpha_x }}\,\dd x\,\dd y\\
\end{align*}
using \eqref{inclusion2}, where $\varrho=\rho/\Lambda$. In the following we use the estimate
\begin{align}
\vert\bfF(t,x, \bfxi) - \bfF(t,y,\bfxi)\vert
&\leq 
c_s\, \vert y-x \vert^{\alpha_x}(1 + \vert \bfxi \vert)^{\frac{p(t,y)s}{2}}
\label{eq:2808'}
\end{align}
which holds for arbitrary $s>1$ provided $x,y\in B_{\varrho}\cap\Omega$ and $\rho$ is small enough. It can be shown exactly as in \eqref{eq:2808}. Under the same assumptions, we also have 
\begin{align}
\vert\bfF(t,y, \bfxi\bfJ^{-1}(y)) - \bfF(t,y,\bfxi\bfJ^{-1}(x))\vert
&\leq 
c_s\, \vert y-x \vert^{\alpha_x}(1 + \vert \bfxi \vert)^{\frac{p(t,y)s}{2}}
\label{eq:2808''}
\end{align}
by the use of the
H\"older-continuity of $\bfJ^{-1}$ and Lemma \ref{lem:hammer}.\\
%-------------------------------------------------------------------------
%\begin{align*}
%&\vert\bfF(t,x, \bfxi\bfJ^{-1}(\Psi(x))) - \bfF(t,x,\bfxi\bfJ^{-1}(\Psi(y)))\vert
%\\&\leq\,c\,(\kappa+|\bfxi\bfJ^{-1}(\Psi(x))|+|\bfxi(\bfJ^{-1}(\Psi(x))-\bfJ^{-1}(\Psi(y)))|)^{\frac{p-2}{2}}|\bfxi(\bfJ^{-1}(\Psi(x))-\bfJ^{-1}(\Psi(y)))|\\
%&\leq\,c\, \vert y-x \vert^{\alpha_x}(\kappa+|\bfxi|)^{\frac{p-2}{2}}|\bfxi|\leq (1 + \vert \bfxi \vert)^{\frac{p(t,x)s}{2}}
%\end{align*}
%----------------------------------------------------------\\
Combining \eqref{eq:2808''} and \eqref{eq:2808'}, we obtain
\begin{align*}
&\int_{B_{\varrho}\cap\Omega}\int_{B_{\varrho}\cap\Omega}\frac{ \big| \bfF\big(\cdot,y,\nabla\bfu(y)\bfJ^{-1}(y) \big) - \bfF\big(\cdot,x,\nabla\bfu(x)\bfJ^{-1}(x) \big) \big|^2}{|y-x|^{n+2\tilde\alpha_x }}\,\dd x\,\dd y\\
&\geq \int_{B_{\varrho}\cap\Omega}\int_{B_{\varrho}\cap\Omega}\frac{ \big| \bfF\big(\cdot,x,\nabla\bfu(y)\bfJ^{-1}(x) \big) - \bfF\big(\cdot,x,\nabla\bfu(x)\bfJ^{-1}(x)\big) \big|^2}{|y-x|^{n+2\tilde\alpha_x }}\,\dd x\,\dd y\\
&-\int_{B_{\varrho}\cap\Omega}\int_{B_{\varrho}\cap\Omega}\frac{ \big| \bfF\big(\cdot,y,\nabla\bfu(y)\bfJ^{-1}(y) \big) - \bfF\big(\cdot,x,\nabla\bfu(y)\bfJ^{-1}(x) \big) \big|^2}{|y-x|^{n+2\tilde\alpha_x }}\,\dd x\,\dd y\\
&\geq\,c\,\int_{B_{\varrho}\cap\Omega}\int_{B_{\varrho}\cap\Omega}\frac{|\bfF(\cdot,x,\nabla\bfu(y))-\bfF(\cdot,x,\nabla\bfu(x))|^2}{|y-x|^{n+2\tilde\alpha_x }}\,\dd x\,\dd y\\
&-c\int_{B_{\varrho}\cap\Omega}\int_{B_{\varrho}\cap\Omega}\frac{(1+|\nabla\bfu(y)|)^{ sp(t,y)}}{|y-x|^{n+2(\tilde\alpha_x-\alpha_x) }}\,\dd x\,\dd y\\
&\geq\,c\,\int_{B_{\varrho}\cap\Omega}\int_{B_{\varrho}\cap\Omega}\frac{|\bfF(\cdot,y,\nabla\bfu(y))-\bfF(\cdot,x,\nabla\bfu(x))|^2}{|y-x|^{n+2\tilde\alpha_x }}\,\dd x\,\dd y\\
&-c\,\int_{B_{\varrho}\cap\Omega}\int_{B_{\varrho}\cap\Omega}\frac{|\bfF(\cdot,y,\nabla\bfu(y))-\bfF(\cdot,x,\nabla\bfu(y))|^2}{|y-x|^{n+2\tilde\alpha_x }}\,\dd x\,\dd y\\
&-c\int_{B_{\varrho}\cap\Omega}\int_{B_{\varrho}\cap\Omega}\frac{(1+|\nabla\bfu(y)|)^{sp(t,y)}}{|y-x|^{n+2(\tilde\alpha_x-\alpha_x) }}\,\dd x\,\dd y\\
&\geq\,c\,[\bfF(\cdot,\cdot,\nabla\bfu)]^2_{W^{\tilde\alpha_x,2}(B_\rho\cap\Omega)}-c\int_{B_{\varrho}\cap\Omega}\int_{B_{\varrho}\cap\Omega}\frac{(1+|\nabla\bfu(y)|)^{sp(t,y)}}{|y-x|^{n+2(\tilde\alpha_x-\alpha_x) }}\,\dd x\,\dd y.
\end{align*}
Note that we also took into account 
%Lemma \ref{lem:hammer} and
 \eqref{lL} in the second step.
Similarly, we obtain
$$[\tilde\bfu]^2_{W^{\tilde\alpha_x,2}(B_\rho^+)}\geq\,c\, [\bfu]^2_{W^{\tilde\alpha_x,2}(B_\varrho\cap\Omega)}.$$
By standard properties of the Riesz potential on bounded domains, we infer
 \begin{align*}
\int_{B_{\varrho}\cap\Omega}\int_{B_{\varrho}\cap\Omega}\frac{(1+|\nabla\bfu(y)|)^{sp(t,y)}}{|y-x|^{n+2(\tilde\alpha_x-\alpha_x) }}\,\dd x\,\dd y
&\leq\,c\,\int_{B_{\varrho}\cap\Omega}(1+|\nabla\bfu(y)|)^{sp(t,y)}\dd y
\end{align*}
provided that $\tilde\alpha_x<\alpha_x$.
Collecting the estimates above we conclude that
\begin{align}\label{eq:1902aboundary}
\begin{aligned}
\sup_{t\in [0,T]}[\bfu]^2_{W^{\tilde\alpha_x,2}(B_{ \varrho}\cap\Omega)}&+\int_0^T[\bfF(\cdot,\cdot,\nabla\bfu)]^2_{W^{\tilde\alpha_x,2}(B_{\varrho}\cap\Omega)}\dt
\\&
\leq\,c_B\,\int_0^T\int_{B_{10B_ \rho}\cap\Omega} \big(1+|\nabla \bfu|\big)^{sp}\dxt
\end{aligned}
\end{align}
for all $\tilde\alpha_x<\alpha_x$, where the balls are centered at $x_0$.
Now, we cover $\partial\Omega$ and $\Omega$ with a finite family of appropriate balls and gain from \eqref{eq:1902a} and \eqref{eq:1902aboundary}
\begin{align}\label{eq:spacefinal}
\begin{aligned}
\sup_{t\in [0,T]}[\bfu]^2_{W^{\tilde\alpha_x,2}(\Omega)}&+\int_0^T[\bfF(\cdot,\cdot,\nabla\bfu)]^2_{W^{\tilde\alpha_x,2}(\Omega)}\dt
\\&
\leq\,c\int_0^T\int_{\Omega} \big(1+|\nabla \bfu|^{sp}\big)\dxt.
\end{aligned}
\end{align}

\subsection{Regularity in time}
Now, we analyse the regularity with respect to time and prove
\begin{align}\label{eq:timefinal}
[\bfF(\cdot,\cdot,\nabla\bfu)]^2_{W^{\tilde\alpha_t,2}(I;L^2(\Omega))}
\leq\,c\int_0^T\int_{\Omega} \big(1+|\nabla \bfu|^{sp}\big)\dxt.
\end{align}
We start by showing that
\begin{align}\label{eq:1010}
\sup_{t>0}\int_{\Omega}\frac{|\bfu(t)-\bfu(0)|^2}{t^2}\dx\leq\,c.
\end{align}
Equation \eqref{eq:heatweak1} can be formulated as
\begin{align*}
\int_\Omega(\bfu-\bfu_0)\cdot\bfvarphi\dx&= -\int_0^t\int_\Omega \bfS(\cdot,\cdot,\nabla\bfu):\nabla\bfvarphi\dxs+\int_0^t\int_\Omega \bff\cdot\bfvarphi\dxs
\end{align*}
for all $\bfphi\in C^\infty_0(\Omega)$. Multiplying this by $\partial_t\psi$ with $\psi\in C^\infty_0([0,T))$ and integrating by parts we obtain
\begin{align*}
\int_Q(\bfu-\bfu_0)\cdot\bfvarphi\,\partial_t\psi\,\dxt&= \int_Q \bfS(\cdot,\cdot,\nabla\bfu):\nabla\bfvarphi\,\psi\dxt-\int_Q \bff\cdot\bfvarphi\,\psi\dxt
\end{align*}
for all $\bfphi\in C^\infty_0(\Omega)$ and $\psi\in C^\infty_0([0,T))$ and finally
\begin{align*}
\int_Q(\bfu-\bfu_0)\cdot\partial_t\bfPsi\,\dxt&= \int_Q \bfS(\cdot,\cdot,\nabla\bfu):\nabla\bfPsi\,\dxt-\int_Q \bff\cdot\bfPsi\dxt
\end{align*}
for all $\bfPsi\in C^\infty_0([0,T)\times\Omega)$ (using that test-functions which factorize in space and time are dense).
%We test the equation by $\tau_h^t\bfu$ with $t\leq h$, where $\tau_h^tv(t)=v(t+h)-v(t)$.
Setting $\bfPsi=(\bfu-\bfu_0)\chi_{(0,t)}$ (in fact, one has to use a smooth approximation) yields
\begin{align}\nonumber
\|\bfu(t)-\bfu_0\|_{L^2(\Omega)}^2&=-2\int_0^t\int_\Omega\Big(\bfS(\cdot,\cdot,\nabla \bfu)-\bfS(\cdot,\cdot,\nabla \bfu_0)\Big):\nabla\big( \bfu-\bfu_0\big)\,\dif x\,\dif\sigma\\
&\quad +2\int_0^t\int_\Omega\diver\bfS(\cdot,\cdot,\nabla \bfu_0)\cdot (\bfu-\bfu_0)\,\dif x\,\dif\sigma+2\int_0^t\int_\Omega \bff\cdot(\bfu-\bfu_0)\dxs.\label{eq:0603}
\end{align}
Now fix $t_0\in(0,T)$ and consider $t\in(0,t_0)$.
By H\"older's inequality and the assumptions on $\bfu_0$ and $\bff$, recall \eqref{ass:fu0}, we deduce that
\begin{align*}
&\|\bfu(t)-\bfu_0\|_{L^2(\Omega)}^2+2\,\int_0^t\int_\Omega\Big(\bfS(\cdot,\cdot,\nabla \bfu)-\bfS(\cdot,\cdot\nabla \bfu_0)\Big):\nabla\big( \bfu-\bfu_0\big)\,\dif x\,\dif\sigma\\
&\quad\leq \int_0^{t}\frac{1}{t_0}\|\bfu-\bfu_0\|_{L^2(\Omega)}^2\,\dif\sigma+ct_0^2\,\Big(\|\diver\bfS(0,\cdot,\nabla \bfu_0)\|_{L^2(\Omega)}^2+\sup_{t\in(0,T)}\|\bff\|^2_{L^2(\Omega)}\Big)\\
&\quad\leq \int_0^{t}\frac{1}{t_0}\|\bfu-\bfu_0\|_{L^2(\Omega)}^2\,\dif\sigma+ct_0^2,
\end{align*}
where $c$ is independent of $c_0$.
Finally, Gronwall's lemma implies
\begin{align*}
\|\bfu(t_0)-\bfu_0\|_{L^2(\Omega)}^2\leq \sup_{t\in(0,t_0)}\|\bfu(t)-\bfu_0\|_{L^2(\Omega)}^2\leq\,ct_0^2\exp\bigg(\int_0^{t_0}\frac{1}{t_0}\ds\bigg)\leq \,ct_0^2
\end{align*}
which completes the proof of \eqref{eq:1010} by arbitrariness of $t_0$.\\
Let us consider the difference $\bfw_h(t):=\bfu(t)-\bfu(t-h)$ for $t\in(h,T)$
and $0<h\ll1$ and argue as in \eqref{eq:0603}. We find that
\begin{align*}
&\|\bfw_h(t)\|_{L^2(\Omega)}^2
=\|\bfw_h(h)\|_{L^2(\Omega)}^2+2\int_h^t\int_\Omega\big(\bff(\sigma)-\bff(\sigma-h)\big) \cdot\bfw_h\dxs\\&-2\int_h^t\int_\Omega\Big(\bfS(\sigma,\cdot,\nabla \bfu(\sigma))-\bfS(\sigma-h,\cdot,\nabla \bfu(\sigma-h))\Big):\big(\nabla \bfu(\sigma)-\nabla \bfu(\sigma-h)\big)\,\dif x\,\dif\sigma\\
&=\|\bfw_h(h)\|_{L^2(\Omega)}^2+2\int_h^t\int_\Omega\big(\bff(\sigma)-\bff(\sigma-h)\big) \cdot\bfw_h\dxs\\&-2\int_h^t\int_\Omega\Big(\bfS(\sigma,\cdot,\nabla \bfu(\sigma))-\bfS(\sigma,\cdot,\nabla \bfu(\sigma-h))\Big):\big(\nabla \bfu(\sigma)-\nabla \bfu(\sigma-h)\big)\,\dif x\,\dif\sigma\\
&-2\int_h^t\int_\Omega\Big(\bfS(\sigma,\cdot,\nabla \bfu(\sigma-h))-\bfS(\sigma-h,\cdot,\nabla \bfu(\sigma-h))\Big):\big(\nabla \bfu(\sigma)-\nabla \bfu(\sigma-h)\big)\,\dif x\,\dif\sigma\\
&=:(I)+(II)+(III)+(IV).
\end{align*}
In view of \eqref{eq:1010} we obtain
\begin{align*}
(I)\leq\,ch^2.
\end{align*}
Using the assumptions on $\bff$ we have
\begin{align*}
(II)&\leq\,\int_h^t\int_\Omega|\bff(\sigma)-\bff(\sigma-h)|^2 \dxs+\int_h^t\int_\Omega|\bfw_h|^2\dxs\\
&\leq\,ch^{2\alpha_t}[\bff]^2_{C^{\alpha_t}(0,T;L^2(\Omega))}+\int_h^t\int_\Omega|\bfw_h|^2\dxs,
\end{align*}
where $[\cdot]_{C^{\alpha_t}(0,T;L^2(\Omega))}$ denotes the usual H\"older-seminorm.
Furthermore, if we define $\tau^\sigma_{-h} v(\sigma):=v(\sigma)-v(\sigma-h)$, we have
\begin{align*}
-(III)&\sim\, \int_h^t\int_{\Omega}(\kappa+|\nabla \bfu(\sigma,x)|+|\nabla\bfu(\sigma-h,x)|)^{p-2}|\tau_{-h}^\sigma\nabla\bfu|^2\dxs\\
&\geq\,\int_h^t\int_{\Omega}|\tau_{-h}^\sigma\bfF(\cdot,\cdot,\nabla\bfu)|^2\dxs-ch^{2\alpha_t}\int_{Q}(1+|\nabla\bfu|^{sp})\dxs
\end{align*}
as a consequence of Lemma \ref{lem:hammer} and the H\"older-continuity of $p$.
Finally, we get for $\varepsilon>0$ arbitrary
\begin{align*}
(IV)
%&\leq \,c\,h^{\alpha_t }\int_h^t\int_{\Omega} \big(\kappa+|\nabla\bfu(\sigma-h,x)|\big)^{p-1}\ln\big(\kappa+|\nabla\bfu(\sigma-h,x)|\big)|\tau_h^t\nabla\bfu|\dxt\\
%&\leq \,c\,h^{\alpha_t }\int_h^t\int_{\Omega} \big(\kappa+|\nabla\bfu(\sigma,x)|+|\nabla\bfu(\sigma-h,x)|\big)^{(p-1)+\frac{p(s-1)}{2}}|\tau_h^t\nabla\bfu|\dxs\\
&\leq\,\varepsilon \int_h^t\int_{\Omega} \big(\kappa+|\nabla\bfu(\sigma,x)|+|\nabla\bfu(\sigma-h,x)|\big)^{p-2}|\tau_{-h}^\sigma\nabla\bfu|^2\dxs\\
&+c(\varepsilon) \,h^{2\alpha_t }\int_h^t\int_{\Omega} \big(1+|\nabla\bfu(\sigma,x)|+|\nabla\bfu(\sigma-h,x)|\big)^{sp}\dxs.
\end{align*}
In the estimates for $(III)$ and $(IV)$, we argued similarly as in the proof of \eqref{eq:1902a} and in particular, used estimates in the spirit of \eqref{eq:hxS} and \eqref{eq:hxF} for time-differences.
Plugging all together, choosing $\varepsilon$ small enough and applying Gronwall's lemma, we have  shown that
\begin{align*}
 \int_h^T\int_\Omega|\bfF(\sigma,x,\nabla \bfu(\sigma))&-\bfF(\sigma-h,x,\nabla \bfu(\sigma-h))|^2\,\dxs\\&\leq \,ch^{2\alpha_t}\int_0^T\int_{\Omega}\big(1+ |\nabla \bfu|^{sp}\big)\dxs
\end{align*}
or equivalently 
\begin{align}\label{eq:neu}
\begin{aligned}
\,\int_0^{T-h} \int_\Omega|\bfF(\sigma+h,x,\nabla \bfu(\sigma+h))&-\bfF(\sigma,x,\nabla \bfu(\sigma))|^2\dxs\\&\leq ch^{2\alpha_t}\,\int_0^T\int_{\Omega}\big(1+ |\nabla \bfu|^{sp}\big)\dxs
\end{aligned}
\end{align}
which implies \eqref{eq:timefinal}.\\\

\subsection{Conclusion}

%\begin{align}\label{eq:1902a}
%\begin{aligned}
%\sup_{t\in I_r}\int_{B_r}&|\tau^\gamma_h\bfu|^2\dx+\int_{Q_r}|\tau_h^\gamma\bfF(\cdot,\cdot,\nabla\bfu)|^2\dxt
%\\&\leq\,\frac{c}{(R-r)^2}\,h^{2\alpha_x}\bigg(\int_{Q_R} |\nabla \bfu|^{sp}\dxt+\int_{Q_\varrho}\frac{\big|\tau^\gamma_h\bfu\big|^2}{h^{2\alpha_x}}\dxt\bigg).
%\end{aligned}
%\end{align}

Combining \eqref{eq:spacefinal} and \eqref{eq:timefinal} implies for $\alpha<\min\{\alpha_x,\alpha_t\}$
\begin{align}\label{eq:1902c}
\begin{aligned}
\|\bfF(\nabla\bfu)\|_{W^{\alpha,2}(Q)}^2&\leq\,c\,\bigg(\int_{Q} |\nabla \bfu|^{sp}\dxt+1\bigg)\\
&\leq\,c\,\bigg(\int_{Q} |\bfF(\nabla \bfu)|^{2+\varepsilon}\dxt+1\bigg).
\end{aligned}
\end{align}
 Here $\varepsilon>0$ can be chosen arbitrarily close to zero provided $s$ is close enough to one. Moreover, we have $W^{\alpha,2}(Q)\hookrightarrow L^{\chi}(Q)$ for some $\chi=\chi(n,\alpha)>2$, cf. Lemma \ref{lema}. On the other hand, we can interpolate $L^{2+\varepsilon}(Q)$ between $L^2(Q)$ and $L^\chi(Q)$. This implies
\begin{align*}
\int_{Q} |\bfF(\cdot,\cdot,\nabla \bfu)|^{2+\varepsilon}\dxt&\leq\bigg(\int_{Q} |\bfF(\cdot,\cdot,\nabla \bfu)|^{2}\dxt\bigg)^{\frac{2+\varepsilon}{2}\theta}\bigg(\int_{Q} |\bfF(\nabla \bfu)|^{\chi}\dxt\bigg)^{\frac{2+\varepsilon}{\chi}(1-\theta)}\\
&\leq\,c\,\|\bfF(\cdot,\cdot,\nabla\bfu)\|_{W^{\alpha,2}(Q)}^{(1-\theta)(2+\varepsilon)},\quad \theta=\Big(\frac{2\chi}{2+\varepsilon}-2\Big)\frac{1}{\chi-2}.
\end{align*}
In the above we used $\bfF(\cdot,\cdot,\nabla\bfu)\in \big(L^2(Q)\big)^{N\times n}$ which follows directly from $\bfu\in\big(\mathcal V^{p(\cdot)}(Q)\big)^N$ and the definition of $\bfF$.
If $\varepsilon$ is small enough, we have $(1-\theta)(2+\varepsilon)=\varepsilon\frac{\chi}{\chi-2}<1$. Hence, as a consequence of Young's inequality,
\begin{align*}
\int_{Q} |\bfF(\cdot,\cdot,\nabla \bfu)|^{2+\varepsilon}\dxt
&\leq\,\delta\,\|\bfF(\cdot,\cdot,\nabla\bfu)\|_{W^{\alpha,2}(Q)}^2+c_\delta
\end{align*}
for any $\delta>0$. Inserting this into \eqref{eq:1902c} yields for $\delta$ small enough
%\begin{align}\label{eq:1902}
%\begin{aligned}
%\|\bfF(\nabla\bfu)\|_{W^{\alpha,2}(Q)}^2&\leq\,c.
%\end{aligned}
%\end{align}
%The first term on the right-hand-side can be absorbed in the left-hand side by a standard iteration lemma, cf. \cite[Lemma 5.1, p. 81]{Gi}. 
%This 
%finally implies
\begin{align}\label{eq:1902'}
\|\bfF(\nabla\bfu)\|_{W^{\alpha,2}(Q)}^2&\leq\,c.
\end{align}
Note that this implies by Sobolev's embedding that $\bfF(\cdot,\cdot,\nabla\bfu)\in \big(L^\chi(Q)\big)^{N\times n}$.
 Consequently, the term on the right-hand side of
 \eqref{eq:spacefinal} and \eqref{eq:timefinal} is bounded. The claim follows.
\hfill $\Box$

\appendix

\section{\texorpdfstring{Orlicz spaces}{Orlicz spaces}}
\label{sec:Orlicz spaces}
\noindent
The following definitions and results are standard in the theory of
Orlicz spaces and can for example be found in~\cite{RaoR91}.
A continuous, convex function $\rho\,:\, [0,\infty) \to [0,\infty)$
with $\rho(0)=0$, and $\lim_{t \to \infty} \rho(t) = \infty$ is called
a {\em continuous, convex $\phi$-function}.  

We say that $\phi$
satisfies the $\Delta_2$--condition, if there exists $c > 0$ such that
for all $t \geq 0$ holds $\phi(2t) \leq c\, \phi(t)$. By
$\Delta_2(\phi)$ we denote the smallest such constant. Since $\phi(t)
\leq \phi(2t)$ the $\Delta_2$-condition is equivalent to $\phi(2t)
\sim \phi(t)$ uniformly in $t$. For a family $\phi_\lambda$ of
continuous, convex $\phi$-functions we define
$\Delta_2(\set{\phi_\lambda}) := \sup_\lambda \Delta_2(\phi_\lambda)$.
Note that if $\Delta_2(\phi) < \infty$ then $\phi(t) \sim \phi(c\,t)$
uniformly in $t\geq 0$ for any fixed $c>0$.
By $L^\phi$ and $W^{k,\phi}$, $k\in \setN_0$,  we denote the classical Orlicz and
Orlicz-Sobolev spaces, i.e.\ $f \in L^\phi$ iff $\int
\phi(\abs{f})\dx < \infty$ and $f \in W^{k,\phi}$ iff $ \nabla^j f
\in L^\phi$, $0\le j\le k$.\\
A $\phi$-function $\rho$ is called a $N$-function iff it is strictly increasing and convex with
\begin{align*}
  \lim_{t\rightarrow0}\frac{\rho(t)}{t}=
  \lim_{t\rightarrow\infty}\frac{t}{\rho(t)}=0.
\end{align*}
By $\rho^*$ we denote the conjugate N-function of $\rho$, which is
given by $\rho^*(t) = \sup_{s \geq 0} (st - \rho(s))$. Then $\rho^{**}
= \rho$.
\begin{lemma}[Young's inequality]
  \label{lem:young}
  Let $\rho$ be an N-function. Then for all $s,t\geq 0$ we have
  \begin{align*}
    st \leq \rho(s)+\rho^*(t).
  \end{align*}
  If $\Delta_2(\rho,\rho^*)< \infty$, then additionally for all $\delta>0$
  \begin{align*}
    st &\leq \delta\,\rho(s)+c_\delta\,\rho^*(t),
    \\
    st &\leq c_\delta\,\rho(s)+\delta\,\rho^*(t),
\\
\rho'(s)t &\leq \delta\,\rho(s)+c_\delta\,\rho(t),
\\
\rho'(s)t &\leq \delta\,\rho(t)+c_\delta\,\rho(s),
  \end{align*}
  where $c_\delta= c(\delta, \Delta_2(\set{\rho,\rho^*}))$.
\end{lemma}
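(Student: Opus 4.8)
The plan is to derive every inequality from the definition of the conjugate $N$-function $\rho^*$ together with convexity of $\rho$ and the $\Delta_2$-condition. First I would record the basic inequality: since $\rho^*(t)=\sup_{s\ge0}(st-\rho(s))$, one has $st-\rho(s)\le\rho^*(t)$ for all $s,t\ge0$, which is exactly $st\le\rho(s)+\rho^*(t)$.

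Next, for the two estimates carrying a free parameter $\delta>0$, I would reduce to the range $0<\delta\le1$, the case $\delta\ge1$ being already covered. The idea is to apply the basic inequality not to $(s,t)$ but to the rescaled pair $(\delta s,\,t/\delta)$, giving $st\le\rho(\delta s)+\rho^*(t/\delta)$; convexity of $\rho$ with $\rho(0)=0$ yields $\rho(\delta s)\le\delta\,\rho(s)$, while $\rho^*(t/\delta)\le c(\delta,\Delta_2(\rho^*))\,\rho^*(t)$ follows by iterating the $\Delta_2$-condition for $\rho^*$ (legitimate because $1/\delta\ge1$). This produces $st\le\delta\,\rho(s)+c_\delta\,\rho^*(t)$; the symmetric statement $st\le c_\delta\,\rho(s)+\delta\,\rho^*(t)$ is obtained by exchanging the roles of $\rho$ and $\rho^*$ and invoking $\rho^{**}=\rho$.

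For the two inequalities involving $\rho'$ the plan is to first establish the two-sided bound $\rho(s)\le s\,\rho'(s)\le\Delta_2(\rho)\,\rho(s)$, valid since $\rho$ is convex and nondecreasing with $\rho(0)=0$ (the lower bound is $\rho(s)=\int_0^s\rho'(r)\,dr\le s\,\rho'(s)$, the upper one is $s\,\rho'(s)\le\int_s^{2s}\rho'(r)\,dr\le\rho(2s)\le\Delta_2(\rho)\,\rho(s)$). Combined with the Fenchel identity $\rho(s)+\rho^*(\rho'(s))=s\,\rho'(s)$ this gives $\rho^*(\rho'(s))\le c(\Delta_2(\rho))\,\rho(s)$. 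Then I would apply the refined inequalities already proved, with the argument $\rho'(s)$ in the first slot and $t$ in the second: $\rho'(s)\,t\le\delta'\,\rho^*(\rho'(s))+c_{\delta'}\,\rho(t)\le\delta'\,c(\Delta_2(\rho))\,\rho(s)+c_{\delta'}\,\rho(t)$, and choosing $\delta'$ so that $\delta'\,c(\Delta_2(\rho))=\delta$ yields $\rho'(s)\,t\le\delta\,\rho(s)+c_\delta\,\rho(t)$; the last inequality follows in the same way by instead placing the small constant on $\rho(t)$.

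I do not expect a genuine obstacle: the whole argument is bookkeeping with the Legendre transform, convexity, and the $\Delta_2$-condition. The only point requiring care is checking that $c_\delta$ is controlled solely by $\delta$ and $\Delta_2(\set{\rho,\rho^*})$ — in particular that it is the $\Delta_2$-constant of $\rho^*$ (resp. of $\rho$) that enters exactly when one dilates the argument of $\rho^*$ (resp. of $\rho$), and that the equivalence $s\,\rho'(s)\sim\rho(s)$ used in the last two estimates is genuinely two-sided with constants depending only on $\Delta_2(\rho)$.
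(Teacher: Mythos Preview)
Your argument is correct and is the standard route to these inequalities. Note, however, that the paper does not actually prove this lemma: it is placed in the appendix on Orlicz spaces with the disclaimer that ``the following definitions and results are standard in the theory of Orlicz spaces and can for example be found in~[RaoR91]'', and no proof is supplied. So there is nothing to compare against; your write-up simply fills in a proof the authors chose to cite rather than reproduce.
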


\begin{definition}
  \label{ass:phipp}
  Let $\rho$ be an N-function. 
  We say that $\rho$ is {\em elliptic}, if
  $\rho$ is $C^1$ on $[0,\infty)$ and $C^2$ on $(0,\infty)$ and
  assume that 
  \begin{align}
    \label{eq:phipp}
    \rho'(t) &\sim t\,\rho''(t)
  \end{align}
  uniformly in $t > 0$. The constants hidden in $\sim$ are called the
  {\em characteristics of~$\rho$}.
\end{definition}
Note that~\eqref{eq:phipp} is stronger than
$\Delta_2(\rho,\rho^*)<\infty$. In fact, the $\Delta_2$-constants can
be estimated in terms of the characteristics of~$\rho$.

Associated to an elliptic $N$-function $\rho$ we define the tensors
\begin{align*}
  \bfS^\rho(\bfxi)&:=\frac{\rho'(\abs{\bfxi})}{\abs{\bfxi}}\bfxi,\quad
  \bfxi\in\mathbb R^{n\times n}
  \\
  \bfF^\rho(\bfxi)&:=\sqrt{\frac{\rho'(\abs{\bfxi})}{\abs{\bfxi}}}\,\bfxi,\quad
  \bfxi\in\mathbb R^{n\times n}.
\end{align*}

We define the {\em shifted} $N$-function $\rho_a$ for $a\geq 0$ by
\begin{align}
  \label{eq:def_shift}
  \rho_a(t) &:= \int_0^t \frac{\rho'(a+\tau)}{a+\tau} \tau\,\dd\tau.
\end{align}

The following auxiliary result can be found in~\cite{DieE08,DR}.
\begin{lemma}
  \label{lem:shift_sim}
  For all $a,b, t \geq 0$ we have
  \begin{align*}
    \rho_a(t) &\sim 
    \begin{cases}
      \rho''(a) t^2 &\qquad\text{if $t \lesssim a$}
      \\
      \rho(t) &\qquad\text{if $t \gtrsim a$,}
    \end{cases}
\\
(\rho_a)_b(t)&\sim\rho_{a+b}(t).
  \end{align*}
\end{lemma}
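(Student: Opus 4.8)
The plan is to extract both equivalences from the single ellipticity hypothesis \eqref{eq:phipp}. First I would record the elementary consequences that will be used throughout: integrating $\rho'$ and $\rho''$ and invoking \eqref{eq:phipp} gives the pointwise chain $\rho(s)\sim s\,\rho'(s)\sim s^{2}\rho''(s)$ uniformly in $s>0$; moreover $\rho$, $\rho'$ and $\rho''$ all satisfy a $\Delta_2$-condition whose constant is controlled by the characteristics of $\rho$, so that $\rho'(s)/s\sim\rho''(s)$ and each of these quantities is comparable at comparable arguments; and $\rho(\lambda s)\ge\lambda\,\rho(s)$ for $\lambda\ge1$, hence $\rho(s/2)\le\tfrac12\rho(s)$. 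These are precisely the facts assembled in \cite{DieE08,DR}.

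Next I would prove the first equivalence by splitting the defining integral $\rho_a(t)=\int_0^t\frac{\rho'(a+\tau)}{a+\tau}\,\tau\,\dd\tau$ from \eqref{eq:def_shift} at $\tau\approx a$. If $t\lesssim a$ then $a+\tau\sim a$ for all $\tau\le t$, so the integrand is $\sim\rho''(a)\,\tau$ and $\rho_a(t)\sim\rho''(a)\,t^{2}$. If $t\gtrsim a$, the contribution of $\tau\in[0,a]$ is at most $\sim a\,\rho'(a)\sim\rho(a)\lesssim\rho(t)$, while on $\tau\in[a,t]$ one has $a+\tau\sim\tau$, so $\frac{\rho'(a+\tau)}{a+\tau}\,\tau\sim\rho'(\tau)$ and this part equals, up to constants, $\rho(t)-\rho(a)$; since $\rho(a)\le\rho(t/2)\le\tfrac12\rho(t)$ once $t\gtrsim a$, the two pieces add up to something $\sim\rho(t)$.

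For the second equivalence I would first check that $\rho_a$ is again an elliptic $N$-function whose characteristics depend only on those of $\rho$: from \eqref{eq:def_shift} one has $(\rho_a)'(t)=\frac{\rho'(a+t)}{a+t}\,t$, hence $(\rho_a)'(t)/t=\frac{\rho'(a+t)}{a+t}$, and differentiating once more and using \eqref{eq:phipp} for $\rho$ gives $(\rho_a)''(t)\sim\frac{\rho'(a+t)}{a+t}\sim(\rho_a)'(t)/t$. Applying the first equivalence to $\rho_a$ in place of $\rho$ then gives $(\rho_a)_b(t)\sim(\rho_a)''(b)\,t^{2}\sim\frac{\rho'(a+b)}{a+b}\,t^{2}$ for $t\lesssim b$ and $(\rho_a)_b(t)\sim\rho_a(t)$ for $t\gtrsim b$, and I would finish by matching these with $\rho_{a+b}(t)$ through the first equivalence once more: for $t\lesssim b$ one has $t\lesssim a+b$ and $\rho_{a+b}(t)\sim\rho''(a+b)\,t^{2}\sim\frac{\rho'(a+b)}{a+b}\,t^{2}$; for $t\gtrsim b$ one splits into $t\gtrsim a$ (then $t\gtrsim a+b$, so $\rho_a(t)\sim\rho(t)\sim\rho_{a+b}(t)$) and $t\lesssim a$ (then $b\lesssim a$, so $a+b\sim a$ and $\rho_a(t)\sim\rho''(a)\,t^{2}\sim\rho_{a+b}(t)$). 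The step I expect to be the main obstacle is not any individual estimate but the bookkeeping that keeps every hidden constant uniform in $a,b,t$; this is exactly what the preliminary $\Delta_2$-bounds and the uniform ellipticity of $\rho_a$ are there to guarantee, after which the argument reduces to the elementary integral estimates above.
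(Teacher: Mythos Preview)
Your proposal is correct, and in fact goes further than the paper itself: the paper does not supply a proof of this lemma but simply quotes it as an auxiliary result from \cite{DieE08,DR}. The argument you outline---reducing everything to the consequence $\rho(s)\sim s\rho'(s)\sim s^{2}\rho''(s)$ of \eqref{eq:phipp}, splitting the defining integral at $\tau\approx a$ for the first relation, and then showing $\rho_a$ inherits ellipticity with uniform characteristics so that the first relation bootstraps to give the second---is exactly the line taken in those references, and your case analysis for $(\rho_a)_b$ is complete.

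One minor point of bookkeeping: in the case $t\gtrsim a$ you invoke $\rho(a)\le\rho(t/2)\le\tfrac12\rho(t)$, which literally requires $a\le t/2$ rather than just $t\gtrsim a$. This is harmless because in the overlap region $a\le t\le 2a$ both asymptotic formulas $\rho''(a)t^{2}$ and $\rho(t)$ agree (since $t\sim a$ and $\rho(a)\sim a^{2}\rho''(a)$), so one may freely take the implicit constant in ``$t\gtrsim a$'' to be $2$; you might add a sentence making this explicit.
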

\begin{lemma}[{\cite[Lemma~2.3]{DieE08}}]
  \label{lem:hammer}
  We have
  \begin{align*}
    %\label{eq:hammerabc}
    \begin{aligned}
      % \hspace{-.29cm} 
      \big({\bfS^\rho}(\bfP) - {\bfS^\rho}(\bfQ)\big) \cdot
      \big(\bfP-\bfQ \big) &\sim \bigabs{ \bfF^\rho(\bfP) -
        \bfF^\rho(\bfQ)}^2
      \\
%      \label{eq:hammerb}
      &\sim \rho_{\abs{\bfP}}(\abs{\bfP - \bfQ})
      \\
%      \label{eq:hammerc}
      &\sim \rho''\big( \abs{\bfP} + \abs{\bfQ}
      \big)\abs{\bfP - \bfQ}^2
    \end{aligned}
  \end{align*}
  uniformly in $\bfP, \bfQ \in \setR^{N \times n}$.  Moreover,
  uniformly in $\bfQ \in \setR^{N \times n}$,
  \begin{align*}
    %\label{eq:hammerd}
    \bfS^\rho(\bfQ) \cdot \bfQ &\sim \abs{\bfF^\rho(\bfQ)}^2\sim
    \rho(\abs{\bfQ})\\
  \abs{{\bfS^\rho}(\bfP) - {\bfS^\rho}(\bfQ)}&\sim\big(\rho_{\abs{\bfP}}\big)'(\abs{\bfP - \bfQ}).
  \end{align*}
  The constants depend only on the characteristics of $\rho$.
\end{lemma}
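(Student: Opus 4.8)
The plan is to deduce every equivalence from the single pointwise Hessian estimate
\[
D\bfS^\rho(\bfeta)(\bfxi,\bfxi)\sim\rho''(|\bfeta|)\,|\bfxi|^2,\qquad |D\bfF^\rho(\bfeta)|^2\sim\rho''(|\bfeta|),
\]
uniform in $\bfeta\neq 0$ and $\bfxi$, and then to integrate along the segment joining $\bfP$ and $\bfQ$. First I would record the identities $\bfS^\rho(\bfQ)\cdot\bfQ=|\bfF^\rho(\bfQ)|^2=\rho'(|\bfQ|)\,|\bfQ|$ together with $\rho'(t)\,t\sim\rho(t)$, the latter being a consequence of $\Delta_2(\rho,\rho^*)<\infty$, which ellipticity implies (cf.\ the remark after Definition~\ref{ass:phipp}); this already yields the second block of equivalences. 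For the Hessian of $\bfS^\rho$ a direct differentiation gives
\[
D\bfS^\rho(\bfeta)(\bfxi,\bfxi)=\frac{\rho'(|\bfeta|)}{|\bfeta|}\,|\bfxi|^2+\Big(\rho''(|\bfeta|)-\frac{\rho'(|\bfeta|)}{|\bfeta|}\Big)\frac{(\bfeta\cdot\bfxi)^2}{|\bfeta|^2},
\]
and since $0\le (\bfeta\cdot\bfxi)^2/|\bfeta|^2\le|\bfxi|^2$, the right-hand side lies between $\min\{\rho''(|\bfeta|),\rho'(|\bfeta|)/|\bfeta|\}|\bfxi|^2$ and $\max\{\rho''(|\bfeta|),\rho'(|\bfeta|)/|\bfeta|\}|\bfxi|^2$; the two-sided ellipticity $\rho'(t)\sim t\rho''(t)$ collapses both bounds to $\rho''(|\bfeta|)|\bfxi|^2$. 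The analogous computation for $\bfF^\rho(\bfeta)=\sqrt{\rho'(|\bfeta|)/|\bfeta|}\,\bfeta$ gives $|D\bfF^\rho(\bfeta)|^2\sim\rho'(|\bfeta|)/|\bfeta|\sim\rho''(|\bfeta|)$ in the same way.

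Next, writing $\bfeta_\theta:=\bfQ+\theta(\bfP-\bfQ)$ and using the fundamental theorem of calculus,
\[
\big(\bfS^\rho(\bfP)-\bfS^\rho(\bfQ)\big)\cdot(\bfP-\bfQ)=\int_0^1 D\bfS^\rho(\bfeta_\theta)(\bfP-\bfQ,\bfP-\bfQ)\,\dd\theta\sim|\bfP-\bfQ|^2\int_0^1\rho''(|\bfeta_\theta|)\,\dd\theta,
\]
and the same representation with Cauchy--Schwarz gives $|\bfF^\rho(\bfP)-\bfF^\rho(\bfQ)|^2\lesssim|\bfP-\bfQ|^2\int_0^1\rho''(|\bfeta_\theta|)\,\dd\theta$. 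The elementary geometric fact $\int_0^1\rho''(|\bfeta_\theta|)\,\dd\theta\sim\rho''(|\bfP|+|\bfQ|)$ (true because $\rho'$ is increasing and $\rho''(t)\sim\rho'(t)/t$, so the average of $\rho''$ along the segment is comparable to its value at the far endpoint $|\bfP|+|\bfQ|$; split according to whether $|\bfeta_\theta|\ge\tfrac12(|\bfP|+|\bfQ|)$ and use $\Delta_2$) then produces the equivalence with $\rho''(|\bfP|+|\bfQ|)|\bfP-\bfQ|^2$. I pass to the shifted $N$-function through Lemma~\ref{lem:shift_sim}: if $|\bfP-\bfQ|\lesssim|\bfP|$ then $|\bfQ|\lesssim|\bfP|$, hence $|\bfP|+|\bfQ|\sim|\bfP|$ and $\rho_{|\bfP|}(|\bfP-\bfQ|)\sim\rho''(|\bfP|)|\bfP-\bfQ|^2$; if $|\bfP-\bfQ|\gtrsim|\bfP|$ then the triangle inequality forces $|\bfP|+|\bfQ|\sim|\bfP-\bfQ|$, so $\rho''(|\bfP|+|\bfQ|)|\bfP-\bfQ|^2\sim\rho'(|\bfP-\bfQ|)|\bfP-\bfQ|\sim\rho(|\bfP-\bfQ|)\sim\rho_{|\bfP|}(|\bfP-\bfQ|)$. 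This identifies all three quantities. For the last claim, the segment representation gives $|\bfS^\rho(\bfP)-\bfS^\rho(\bfQ)|\lesssim\rho''(|\bfP|+|\bfQ|)|\bfP-\bfQ|\sim\frac{\rho'(|\bfP|+|\bfP-\bfQ|)}{|\bfP|+|\bfP-\bfQ|}|\bfP-\bfQ|=(\rho_{|\bfP|})'(|\bfP-\bfQ|)$, using $|\bfP|+|\bfQ|\sim|\bfP|+|\bfP-\bfQ|$, while the reverse bound follows by testing $\bfS^\rho(\bfP)-\bfS^\rho(\bfQ)$ against the unit matrix $(\bfP-\bfQ)/|\bfP-\bfQ|$ and invoking the scalar-product equivalence already obtained.

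The main obstacle is the lower bound $|\bfF^\rho(\bfP)-\bfF^\rho(\bfQ)|^2\gtrsim\rho_{|\bfP|}(|\bfP-\bfQ|)$, since Cauchy--Schwarz is lossy in the wrong direction and one cannot simply integrate $|D\bfF^\rho|$ along a fixed direction. I would argue by cases. When $|\bfP-\bfQ|\le\tfrac12|\bfP|$ every $|\bfeta_\theta|$ is comparable to $|\bfP|$, so $\rho''(|\bfeta_\theta|)\sim\rho''(|\bfP|)$ is essentially constant along the segment and the integral representation for $\bfF^\rho$ holds with two-sided constants directly. When $|\bfP-\bfQ|\ge\tfrac12|\bfP|$ one has $|\bfP|+|\bfQ|\sim|\bfP-\bfQ|$ and expands $|\bfF^\rho(\bfP)-\bfF^\rho(\bfQ)|^2=|\bfF^\rho(\bfP)|^2-2\,\bfF^\rho(\bfP)\cdot\bfF^\rho(\bfQ)+|\bfF^\rho(\bfQ)|^2$, estimating the cross term by Young's inequality (Lemma~\ref{lem:young}) and using the coercivity $\bfF^\rho(\bfeta)\cdot\bfeta\sim\rho(|\bfeta|)$ to bound the expression below by $c\,\rho(|\bfP-\bfQ|)\sim\rho_{|\bfP|}(|\bfP-\bfQ|)$. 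This is precisely the two-regime argument of \cite{DieE08,DR}, which the present statement quotes.
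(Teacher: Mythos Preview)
The paper does not give a proof of this lemma; it is simply quoted from \cite[Lemma~2.3]{DieE08}. Your sketch follows essentially the route taken in \cite{DieE08,DR}: differentiate $\bfS^\rho$ and $\bfF^\rho$, use the ellipticity $\rho'(t)\sim t\rho''(t)$ to obtain the pointwise Hessian bounds, integrate along the segment $\bfeta_\theta$, and identify the resulting integral $\int_0^1\rho''(|\bfeta_\theta|)\,\dd\theta$ with $\rho''(|\bfP|+|\bfQ|)$. The link to the shifted function via Lemma~\ref{lem:shift_sim} and the argument for $|\bfS^\rho(\bfP)-\bfS^\rho(\bfQ)|\sim(\rho_{|\bfP|})'(|\bfP-\bfQ|)$ are also correct.

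There is, however, a genuine gap in your treatment of the lower bound $|\bfF^\rho(\bfP)-\bfF^\rho(\bfQ)|^2\gtrsim\rho_{|\bfP|}(|\bfP-\bfQ|)$ in the second case $|\bfP-\bfQ|\ge\tfrac12|\bfP|$. Expanding the square and ``estimating the cross term by Young's inequality'' gives only $|\bfF^\rho(\bfP)-\bfF^\rho(\bfQ)|^2\ge 0$, which is useless: indeed $2|\bfF^\rho(\bfP)\cdot\bfF^\rho(\bfQ)|\le |\bfF^\rho(\bfP)|^2+|\bfF^\rho(\bfQ)|^2$ kills the entire expression. A clean repair is to distinguish the sign of $\bfP\cdot\bfQ$. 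If $\bfP\cdot\bfQ<0$ then $\bfF^\rho(\bfP)\cdot\bfF^\rho(\bfQ)<0$ as well, so the cross term helps and $|\bfF^\rho(\bfP)-\bfF^\rho(\bfQ)|^2\ge|\bfF^\rho(\bfP)|^2+|\bfF^\rho(\bfQ)|^2\sim\rho(|\bfP|)+\rho(|\bfQ|)\sim\rho(|\bfP|+|\bfQ|)$, while $|\bfP-\bfQ|\sim|\bfP|+|\bfQ|$ in this case. If $\bfP\cdot\bfQ\ge0$, compare the expansions of $|\bfF^\rho(\bfP)-\bfF^\rho(\bfQ)|^2$ and $(\bfS^\rho(\bfP)-\bfS^\rho(\bfQ))\cdot(\bfP-\bfQ)$: both share the same diagonal terms $\rho'(|\bfP|)|\bfP|+\rho'(|\bfQ|)|\bfQ|$, and by AM--GM the cross coefficient in the $\bfF$-expression, $2\sqrt{\rho'(|\bfP|)\rho'(|\bfQ|)/(|\bfP||\bfQ|)}$, is at most the cross coefficient $\rho'(|\bfP|)/|\bfP|+\rho'(|\bfQ|)/|\bfQ|$ in the $\bfS$-expression; since $\bfP\cdot\bfQ\ge0$ this yields $|\bfF^\rho(\bfP)-\bfF^\rho(\bfQ)|^2\ge(\bfS^\rho(\bfP)-\bfS^\rho(\bfQ))\cdot(\bfP-\bfQ)$ directly. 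This is closer to the actual argument in \cite{DieE08}.
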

The following inequality follows from immediately from Lemmas \ref{lem:young} and \ref{lem:hammer}.
\begin{lemma}
  \label{lem:young2}
  Let $\rho$ be an elliptic N-function. Then for each $\delta>0$ there
  exists $C_\delta \geq 1$ (only depending on~$\delta$ and the
  characteristics of~$\rho$) such that
  \begin{align*}
    \big({\bfS^\rho}(\bfP) - {\bfS^\rho}(\bfQ)\big) \cdot
      \big(\bfR-\bfQ \big) &\leq \delta\bigabs{ \bfF^\rho(\bfP) -
        \bfF^\rho(\bfQ)}^2+C_\delta \bigabs{ \bfF^\rho(\bfR) -
        \bfF^\rho(\bfQ)}^2
  \end{align*}
%   \begin{align*}
%     \rho_{\abs{\bfa}}(t)&\leq C_\delta\, \rho_{\abs{\bfb}}(t)
%     +\delta\, \abs{\bfF(\bfa) - \bfF(\bfb)}^2,
%     \\
%     (\rho_{\abs{\bfa}})^*(t)&\leq C_\delta\, (\rho_{\abs{\bfb}})^*(t)
%     +\delta\, \abs{\bfF(\bfa) - \bfF(\bfb)}^2
%   \end{align*}
  for all $\bfP,\bfQ,\bfR\in\setR^{N\times n}$ and $t\geq0$.
\end{lemma}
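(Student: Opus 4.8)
The plan is to derive the estimate from the pointwise equivalences in Lemma~\ref{lem:hammer} together with the shifted Young inequality in Lemma~\ref{lem:young}; the whole argument is short, and the only subtlety is that the shift has to be taken at $\abs{\bfQ}$ rather than at $\abs{\bfP}$.

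First I would bound the left-hand side by Cauchy--Schwarz,
\begin{align*}
\big(\bfS^\rho(\bfP)-\bfS^\rho(\bfQ)\big)\cdot\big(\bfR-\bfQ\big)\leq\bigabs{\bfS^\rho(\bfP)-\bfS^\rho(\bfQ)}\,\bigabs{\bfR-\bfQ},
\end{align*}
and then invoke the last equivalence in the first block of Lemma~\ref{lem:hammer}, $\bigabs{\bfS^\rho(\bfP)-\bfS^\rho(\bfQ)}\sim\big(\rho_{\abs{\bfP}}\big)'(\abs{\bfP-\bfQ})$. Since the quantity $\bigabs{\bfS^\rho(\bfP)-\bfS^\rho(\bfQ)}$ is symmetric in $\bfP$ and $\bfQ$, it is equally comparable to $\big(\rho_{\abs{\bfQ}}\big)'(\abs{\bfP-\bfQ})$, so the left-hand side is $\lesssim\big(\rho_{\abs{\bfQ}}\big)'(\abs{\bfP-\bfQ})\,\abs{\bfR-\bfQ}$.

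Next I would apply Young's inequality (Lemma~\ref{lem:young}, in the form $\rho'(s)t\leq\delta\,\rho(s)+c_\delta\,\rho(t)$) to the $N$-function $\rho_{\abs{\bfQ}}$. This is legitimate because the shifted functions $\rho_a$ are elliptic with characteristics controlled, uniformly in $a\geq0$, by those of $\rho$, hence $\Delta_2(\rho_a,(\rho_a)^\ast)<\infty$ uniformly in $a$; consequently the resulting constant is independent of $\bfP,\bfQ,\bfR$. This yields
\begin{align*}
\big(\rho_{\abs{\bfQ}}\big)'(\abs{\bfP-\bfQ})\,\abs{\bfR-\bfQ}\leq\delta\,\rho_{\abs{\bfQ}}(\abs{\bfP-\bfQ})+c_\delta\,\rho_{\abs{\bfQ}}(\abs{\bfR-\bfQ}).
\end{align*}

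Finally I would translate both terms back via Lemma~\ref{lem:hammer}: $\rho_{\abs{\bfQ}}(\abs{\bfP-\bfQ})\sim\bigabs{\bfF^\rho(\bfP)-\bfF^\rho(\bfQ)}^2$ and, using the symmetry of the arguments once more, $\rho_{\abs{\bfQ}}(\abs{\bfR-\bfQ})=\rho_{\abs{\bfQ}}(\abs{\bfQ-\bfR})\sim\bigabs{\bfF^\rho(\bfR)-\bfF^\rho(\bfQ)}^2$. Relabelling $\delta$ to absorb the implicit constants from $\sim$ then gives the claim. There is essentially no obstacle; the one point that must be respected is the choice of the shift at $\abs{\bfQ}$, since it is this choice that makes the error term comparable to $\bigabs{\bfF^\rho(\bfR)-\bfF^\rho(\bfQ)}^2$ — shifting at $\abs{\bfP}$ would leave $\rho_{\abs{\bfP}}(\abs{\bfR-\bfQ})$, which is not controlled by $\bigabs{\bfF^\rho(\bfR)-\bfF^\rho(\bfQ)}^2$ in general.
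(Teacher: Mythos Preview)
Your argument is correct and matches the paper's approach: the paper simply records that the inequality ``follows immediately from Lemmas~\ref{lem:young} and~\ref{lem:hammer}'', and what you wrote is exactly the natural unpacking of that sentence. Your remark about shifting at $\abs{\bfQ}$ rather than $\abs{\bfP}$ is the right point to flag, and the symmetry justification you give is valid.
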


\begin{lemma}[Change of Shift]
  \label{lem:shift_ch}
  Let $\rho$ be an elliptic N-function. 
    \begin{enumerate}
  \item[(a)] Then for each $\delta>0$ there
  exists $C_\delta \geq 1$ (only depending on~$\delta$ and the
  characteristics of~$\rho$) such that
  \begin{align*}
    \rho_{\abs{\bfP}}(t)&\leq C_\delta\, \rho_{\abs{\bfQ}}(t)
    +\delta\, \rho_{\abs{\bfP}}(\abs{\bfP - \bfQ}),
    \\
    (\rho_{\abs{\bfP}})^*(t)&\leq C_\delta\, (\rho_{\abs{\bfQ}})^*(t)
    +\delta\, \rho_{\abs{\bfP}}(\abs{\bfP - \bfQ}),
  \end{align*}
%   \begin{align*}
%     \rho_{\abs{\bfa}}(t)&\leq C_\delta\, \rho_{\abs{\bfb}}(t)
%     +\delta\, \abs{\bfF(\bfa) - \bfF(\bfb)}^2,
%     \\
%     (\rho_{\abs{\bfa}})^*(t)&\leq C_\delta\, (\rho_{\abs{\bfb}})^*(t)
%     +\delta\, \abs{\bfF(\bfa) - \bfF(\bfb)}^2
%   \end{align*}
  for all $\bfP,\bfQ\in\setR^{N\times n}$ and $t\geq0$.
  \item[(b)] There
  exists $c \geq 1$ (only depending on the
  characteristics of~$\rho$) such that
  \begin{align*}
  \rho'_{|\bfP|}(|\bfQ-\bfP|)\leq\,c\,\rho'_{|\bfR|}(|\bfQ-\bfR|)+c\,\rho'_{|\bfR|}(|\bfP-\bfR|)
  \end{align*}
  for all $\bfP,\bfQ,\bfR\in\setR^{N\times n}$.
  \end{enumerate}
\end{lemma}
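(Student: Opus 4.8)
The plan is to treat the two parts separately. Part~(b) follows quickly from Lemma~\ref{lem:hammer}: by the last equivalence there, $\rho'_{|\bfP|}(|\bfQ-\bfP|)\sim|\bfS^\rho(\bfP)-\bfS^\rho(\bfQ)|$; since the right-hand side is symmetric in its two entries the same lemma also gives $\rho'_{|\bfP|}(|\bfP-\bfR|)\sim|\bfS^\rho(\bfP)-\bfS^\rho(\bfR)|\sim\rho'_{|\bfR|}(|\bfP-\bfR|)$ and $\rho'_{|\bfR|}(|\bfQ-\bfR|)\sim|\bfS^\rho(\bfR)-\bfS^\rho(\bfQ)|$. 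The triangle inequality $|\bfS^\rho(\bfP)-\bfS^\rho(\bfQ)|\le|\bfS^\rho(\bfP)-\bfS^\rho(\bfR)|+|\bfS^\rho(\bfR)-\bfS^\rho(\bfQ)|$ in $\setR^{N\times n}$ then yields $\rho'_{|\bfP|}(|\bfQ-\bfP|)\le c\,\rho'_{|\bfR|}(|\bfP-\bfR|)+c\,\rho'_{|\bfR|}(|\bfQ-\bfR|)$ with $c$ depending only on the characteristics of~$\rho$.

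For the first inequality in part~(a) I would reduce everything to pointwise estimates via Lemma~\ref{lem:shift_sim}. Combining its two equivalences with $\rho(t)\sim\rho''(t)\,t^2$ and the doubling of~$\rho''$ (all consequences of $\rho'\sim t\,\rho''$ and $\Delta_2(\rho,\rho^*)<\infty$) one obtains the unconditional relation $\rho_a(t)\sim\rho''(a+t)\,t^2$. Now fix $\delta\in(0,1]$ and split into two cases. If $t<\sqrt{\delta}\,|\bfP-\bfQ|$, then convexity of $\rho_{|\bfP|}$ (the elementary bound $\rho_{|\bfP|}(\lambda s)\le\lambda\,\rho_{|\bfP|}(s)$ for $0\le\lambda\le1$) gives $\rho_{|\bfP|}(t)\le\rho_{|\bfP|}(\sqrt{\delta}\,|\bfP-\bfQ|)\le\sqrt{\delta}\,\rho_{|\bfP|}(|\bfP-\bfQ|)$, which after renaming $\sqrt{\delta}$ is the desired bound with no first term. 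If instead $t\ge\sqrt{\delta}\,|\bfP-\bfQ|$, then $|\bfP-\bfQ|\le\delta^{-1/2}t$, so $|\bfP|+t$ and $|\bfQ|+t$ are comparable up to a factor $1+\delta^{-1/2}$; iterating the doubling of~$\rho''$ about $\log_2(\delta^{-1/2})$ times gives $\rho''(|\bfP|+t)\le C_\delta\,\rho''(|\bfQ|+t)$, and multiplying by $t^2$ and using $\rho_a(t)\sim\rho''(a+t)\,t^2$ yields $\rho_{|\bfP|}(t)\le C_\delta\,\rho_{|\bfQ|}(t)$. Keeping track of this $\delta$-versus-$C_\delta$ split is the step that needs care.

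Finally, the conjugate inequality in part~(a) I would deduce from the first one by duality. Since $\rho$ is elliptic, so is $\rho^*$ (with characteristics controlled by those of~$\rho$), and one has the standard identity $(\rho_a)^*(t)\sim(\rho^*)_{\rho'(a)}(t)$. Applying the already-proven first inequality to $\rho^*$ with parallel vectors of lengths $\rho'(|\bfP|)$ and $\rho'(|\bfQ|)$ gives
\[
(\rho^*)_{\rho'(|\bfP|)}(t)\le C_\delta\,(\rho^*)_{\rho'(|\bfQ|)}(t)+\delta\,(\rho^*)_{\rho'(|\bfP|)}\bigl(|\rho'(|\bfP|)-\rho'(|\bfQ|)|\bigr),
\]
which by the identity above reads $(\rho_{|\bfP|})^*(t)\lesssim C_\delta\,(\rho_{|\bfQ|})^*(t)+\delta\,(\rho_{|\bfP|})^*\bigl(|\rho'(|\bfP|)-\rho'(|\bfQ|)|\bigr)$. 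To finish, Lemma~\ref{lem:hammer} applied with the identity $\bfS^\rho(r\mathbf e)=\rho'(r)\mathbf e$ for a unit vector $\mathbf e$, together with the reverse triangle inequality $\bigl||\bfP|-|\bfQ|\bigr|\le|\bfP-\bfQ|$, gives $|\rho'(|\bfP|)-\rho'(|\bfQ|)|\lesssim(\rho_{|\bfP|})'(|\bfP-\bfQ|)$; then the Young equality $(\rho_{|\bfP|})^*\bigl((\rho_{|\bfP|})'(s)\bigr)\sim\rho_{|\bfP|}(s)$, valid because $\Delta_2(\rho_{|\bfP|})<\infty$, together with monotonicity of $(\rho_{|\bfP|})^*$, bounds the last term by $\rho_{|\bfP|}(|\bfP-\bfQ|)$. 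Absorbing the constants into $\delta$ completes the proof. The only real obstacles are the $\delta$/$C_\delta$ bookkeeping in the first inequality and the invocation of the duality identity $(\rho_a)^*\sim(\rho^*)_{\rho'(a)}$; the rest is the routine calculus of shifted $N$-functions.
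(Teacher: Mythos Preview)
The paper does not actually prove this lemma; it is quoted in the appendix as a known result (part~(a) is the standard change-of-shift estimate going back to \cite{DieE08,DR}, and part~(b) is an immediate consequence of the last line of Lemma~\ref{lem:hammer}). So there is no ``paper's own proof'' to compare against, and your task reduces to producing a self-contained argument---which you have done correctly.

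Your proof of~(b) via the triangle inequality on $|\bfS^\rho(\bfP)-\bfS^\rho(\bfQ)|$ is exactly the intended one-line derivation from Lemma~\ref{lem:hammer}. For~(a), the case split $t\lessgtr\sqrt{\delta}\,|\bfP-\bfQ|$ together with the unconditional relation $\rho_a(t)\sim\rho''(a+t)\,t^2$ and two-sided doubling of $\rho''$ is the standard route and is carried out cleanly; the only place to be slightly more explicit is that doubling of $\rho''$ in \emph{both} directions follows from $\rho''(t)\sim\rho'(t)/t$, monotonicity of $\rho'$, and $\Delta_2(\rho)$. The duality step for the second inequality---passing to $\rho^*$ via $(\rho_a)^*\sim(\rho^*)_{\rho'(a)}$ (Lemma~\ref{lem:shifted2}), applying the first inequality to parallel vectors of lengths $\rho'(|\bfP|)$ and $\rho'(|\bfQ|)$, and closing with $(\rho_{|\bfP|})^*\bigl((\rho_{|\bfP|})'(s)\bigr)\sim\rho_{|\bfP|}(s)$---is correct and is precisely how this is usually argued. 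The absorption of the implicit constants from the $\sim$-relations into a redefined $\delta$ is routine. Nothing is missing.
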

The case $\bfa=0$ or $\bfb=0$ in part (a) implies the following corollary.
\begin{corollary}[Removal of Shift]
  \label{cor:shift_ch}
  Let $\rho$ be an elliptic N-function. 
Then for each $\delta>0$ there
  exists $C_\delta \geq 1$ (only depending on~$\delta$ and the
  characteristics of~$\rho$) such that
  \begin{align*}
    \rho_{\abs{\bfa}}(t)&\leq C_\delta\, \rho(t) +\delta\,
    \rho(\abs{\bfa}),
    \\
    \rho(t)&\leq C_\delta\, \rho_{\abs{\bfa}}(t) +\delta\,
    \rho(\abs{\bfa}),
  \end{align*}
%   \begin{align*}
%     \rho_{\abs{\bfa}}(t)&\leq C_\delta\, \rho_{\abs{\bfb}}(t)
%     +\delta\, \abs{\bfF(\bfa) - \bfF(\bfb)}^2,
%     \\
%     (\rho_{\abs{\bfa}})^*(t)&\leq C_\delta\, (\rho_{\abs{\bfb}})^*(t)
%     +\delta\, \abs{\bfF(\bfa) - \bfF(\bfb)}^2
%   \end{align*}
  for all $\bfa\in\setR^d$ and $t\geq0$.
\end{corollary}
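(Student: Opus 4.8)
The plan is to obtain both inequalities directly from Lemma~\ref{lem:shift_ch}(a) by specialising one of the two matrix arguments to zero. The one preliminary observation needed is that the shift by $0$ is trivial, i.e. $\rho_{0}=\rho$: indeed, by the definition~\eqref{eq:def_shift} we have $\rho_{0}(t)=\int_0^t\rho'(\tau)\,\dd\tau=\rho(t)$ since $\rho(0)=0$. Moreover, as $\rho_a$ depends on its index only through $\abs{\bfa}$, it is harmless that Lemma~\ref{lem:shift_ch}(a) is stated for matrices in $\setR^{N\times n}$ while the corollary is phrased for $\bfa\in\setR^d$.

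For the second estimate I would apply the first inequality of Lemma~\ref{lem:shift_ch}(a) with $\bfP=\mathbf 0$ and $\bfQ=\bfa$. Using $\rho_{\abs{\mathbf 0}}=\rho$ and $\abs{\mathbf 0-\bfa}=\abs{\bfa}$, this reads
\begin{align*}
\rho(t)\leq C_\delta\,\rho_{\abs{\bfa}}(t)+\delta\,\rho(\abs{\bfa}),
\end{align*}
which is precisely the claimed bound, with no further work.

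For the first estimate I would instead take $\bfP=\bfa$ and $\bfQ=\mathbf 0$, which gives
\begin{align*}
\rho_{\abs{\bfa}}(t)\leq C_\delta\,\rho(t)+\delta\,\rho_{\abs{\bfa}}(\abs{\bfa}).
\end{align*}
It then remains to replace $\rho_{\abs{\bfa}}(\abs{\bfa})$ by $\rho(\abs{\bfa})$. This is the borderline case $t=\abs{\bfa}$ of Lemma~\ref{lem:shift_sim}, which yields $\rho_{\abs{\bfa}}(\abs{\bfa})\sim\rho''(\abs{\bfa})\abs{\bfa}^2\sim\rho(\abs{\bfa})$; the second comparability uses $\rho'(t)\sim t\,\rho''(t)$ together with $\Delta_2(\rho)<\infty$, both of which follow from ellipticity of $\rho$ (Definition~\ref{ass:phipp} and the remark after it). Absorbing this fixed comparability constant into the free parameters — replacing $\delta$ by $\delta/c$ and enlarging $C_\delta$ accordingly — produces the first inequality, uniformly in $\bfa\in\setR^d$ and $t\geq0$.

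The whole argument is essentially bookkeeping; the only point requiring slight attention, and hence the principal (if minor) obstacle, is this last absorption step, namely recognising that $\delta\,\rho_{\abs{\bfa}}(\abs{\bfa})$ may be rewritten as $\delta\,\rho(\abs{\bfa})$ up to constants depending only on the characteristics of $\rho$. Beyond that I do not anticipate any genuine difficulty.
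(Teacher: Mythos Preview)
Your proposal is correct and follows exactly the route indicated by the paper, which simply remarks that the corollary is ``the case $\bfa=0$ or $\bfb=0$ in part (a)'' of Lemma~\ref{lem:shift_ch}. Your only addition is to spell out the harmless absorption step $\rho_{\abs{\bfa}}(\abs{\bfa})\sim\rho(\abs{\bfa})$, which the paper leaves implicit.
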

\begin{lemma}
  \label{lem:shifted2}
  Let $\rho$ be an elliptic N-function. Then $(\rho_a)^*(t) \sim
  (\rho^*)_{\rho'(a)}(t)$ uniformly in $a,t \geq 0$. Moreover, for all
  $\lambda \in [0,1]$ we have
  \begin{align*}
    \rho_a(\lambda a) &\sim \lambda^2 \rho(a) \sim
    (\rho_a)^*(\lambda \rho'(a)).
  \end{align*}
  Finally, we have $\rho_a^\ast(\rho'_a(t))\sim \rho_a(t)$ uniformly in $a,t\geq0$.
\end{lemma}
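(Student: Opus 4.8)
The plan is to reduce all four assertions to the two-sided pointwise bounds of Lemma~\ref{lem:shift_sim} together with elementary facts about conjugate $N$-functions, using throughout that the shifted function $\rho_a$ is again an elliptic $N$-function whose characteristics (and hence the $\Delta_2$-constants of $\rho_a$ and of $\rho_a^*$) depend only on those of $\rho$; this is standard, since $\rho_a''(t)=\rho'(a+t)/(a+t)$ and $\rho'(t)\sim t\rho''(t)$ by Definition~\ref{ass:phipp}. It is convenient to record the last identity first: by the Fenchel equality for differentiable $N$-functions, $\rho_a^*(\rho_a'(t))=t\rho_a'(t)-\rho_a(t)=\int_0^t s\,\rho_a''(s)\,\mathrm{d}s$, and since $s\rho_a''(s)\sim\rho_a'(s)$ uniformly in $a$ this gives $\rho_a^*(\rho_a'(t))\sim\int_0^t\rho_a'(s)\,\mathrm{d}s=\rho_a(t)$, uniformly in $a,t\geq0$. (Taking $a=0$ yields $\rho^*(\rho'(t))\sim\rho(t)$.)

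For the equivalence $(\rho_a)^*\sim(\rho^*)_{\rho'(a)}$ I would compare the two sides via their profiles. By Lemma~\ref{lem:shift_sim}, $\rho_a(t)\sim\rho''(a)t^2$ for $t\lesssim a$ and $\rho_a(t)\sim\rho(t)$ for $t\gtrsim a$, while $\rho_a'(a)\sim\rho'(a)$ (using $\rho'(2a)\sim\rho'(a)$), so the maximiser $t^\ast$ in $\rho_a^*(s)=st^\ast-\rho_a(t^\ast)$, determined by $\rho_a'(t^\ast)=s$, satisfies $t^\ast\lesssim a$ when $s\lesssim\rho'(a)$ and $t^\ast\gtrsim a$ otherwise. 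In the first regime $\rho_a''\sim\rho''(a)$ gives $s\sim\rho''(a)t^\ast$ and, by the identity above, $\rho_a^*(s)\sim\rho_a(t^\ast)\sim\rho''(a)(t^\ast)^2\sim s^2/\rho''(a)$; in the second regime $\rho_a'\sim\rho'$ gives $t^\ast\sim(\rho^*)'(s)$ and $\rho_a^*(s)\sim\rho(t^\ast)\sim\rho^*(s)$. On the other side, Lemma~\ref{lem:shift_sim} applied to $\rho^*$ with shift $\rho'(a)$ gives $(\rho^*)_{\rho'(a)}(s)\sim(\rho^*)''(\rho'(a))\,s^2$ for $s\lesssim\rho'(a)$ and $(\rho^*)_{\rho'(a)}(s)\sim\rho^*(s)$ for $s\gtrsim\rho'(a)$; differentiating the inverse-function relation $(\rho^*)'(\rho'(t))=t$ yields $(\rho^*)''(\rho'(a))=1/\rho''(a)$, so the two profiles agree up to constants uniform in $a$.

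The scaling relations then follow quickly. Since $\lambda a\leq a$, Lemma~\ref{lem:shift_sim} gives $\rho_a(\lambda a)\sim\rho''(a)(\lambda a)^2=\lambda^2\rho''(a)a^2$, and $\rho''(a)a^2\sim a\rho'(a)\sim\rho(a)$ by ellipticity and by convexity plus $\Delta_2$ (which give $\rho(a)\leq a\rho'(a)\leq\rho(2a)\lesssim\rho(a)$); this is the second assertion. Applying this second assertion to the elliptic $N$-function $\rho^*$ with $a$ replaced by $\rho'(a)$, and using $\rho^*(\rho'(a))\sim\rho(a)$ from the first paragraph, gives $(\rho^*)_{\rho'(a)}(\lambda\rho'(a))\sim\lambda^2\rho^*(\rho'(a))\sim\lambda^2\rho(a)$; combined with the equivalence $(\rho_a)^*\sim(\rho^*)_{\rho'(a)}$ this yields $(\rho_a)^*(\lambda\rho'(a))\sim\lambda^2\rho(a)$, which completes the chain.

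I expect the only genuinely delicate point to be the uniform control of the constants in the transition regions $t\sim a$ and $s\sim\rho'(a)$ when comparing the profiles; this is exactly where the uniform ellipticity of the family $\{\rho_a\}$ and the uniform $\Delta_2$-bounds of $\{\rho_a,\rho_a^*\}$ are needed, so that the crossover behaviour can be absorbed into fixed multiplicative constants independent of $a$.
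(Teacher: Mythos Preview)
The paper does not actually prove Lemma~\ref{lem:shifted2}; it is stated without proof in the appendix among a list of standard Orlicz-space facts, with the surrounding results attributed to \cite{DieE08,DR}. So there is no ``paper's own proof'' to compare your argument against.

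That said, your strategy is sound and is essentially the standard route to these identities. The derivation of $\rho_a^*(\rho_a'(t))\sim\rho_a(t)$ via the Fenchel identity and $s\rho_a''(s)\sim\rho_a'(s)$ is correct; the scaling $\rho_a(\lambda a)\sim\lambda^2\rho(a)$ follows immediately from the quadratic branch of Lemma~\ref{lem:shift_sim} and $a^2\rho''(a)\sim a\rho'(a)\sim\rho(a)$; and the remaining equivalence $(\rho_a)^*\sim(\rho^*)_{\rho'(a)}$ is obtained exactly as you describe, by matching the two profiles through Lemma~\ref{lem:shift_sim} and the inverse-function relation $(\rho^*)''(\rho'(a))=1/\rho''(a)$. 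Your closing remark about the transition regions $t\sim a$ and $s\sim\rho'(a)$ is the right place to be careful, but the uniform $\Delta_2$-control of $\{\rho_a,\rho_a^*\}$ (which you correctly note depends only on the characteristics of $\rho$) handles this: the two branches in Lemma~\ref{lem:shift_sim} overlap on a region where both descriptions are simultaneously valid up to a fixed constant, so no genuine gap arises at the crossover.
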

\begin{lemma}
  \label{lem:shiftedindex}
  Let $\rho(t) := \int_0^t (\kappa+s)^{q-2} s\,\dd s$ with $q \in
  (1,\infty)$ and $t\geq 0$. Then 
  \begin{align*}
    \rho_a(\lambda t) &\leq c\, \max\set{\lambda^q, \lambda^2}
    \rho_a(t),
    \\
    (\rho_a)^*(\lambda t) &\leq c\, \max\set{\lambda^{q'}, \lambda^2}
    (\rho_a)^*(t)
  \end{align*}
  uniformly in $a,\lambda \geq 0$.
\end{lemma}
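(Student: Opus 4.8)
The plan is to reduce the statement to a single explicit $N$-function and then treat the two inequalities separately: the first by a change of variables, the second by scaling/duality. Since $\rho'(s)=(\kappa+s)^{q-2}s$, one has $\rho'(u)/u=(\kappa+u)^{q-2}$, so by the definition \eqref{eq:def_shift} of the shift,
\begin{align*}
\rho_a(t)=\int_0^t(\kappa+a+\tau)^{q-2}\tau\,\dd\tau=:\psi_b(t),\qquad b:=\kappa+a\ge0,
\end{align*}
i.e.\ $\rho_a$ is again a function of exactly the same form, with the parameter $\kappa$ replaced by $b=\kappa+a$. It therefore suffices to prove both estimates for $\psi_b$ with constants depending only on $q$, and then to specialise to $b=\kappa+a$; this automatically yields uniformity in $a\ge0$. (One checks that $\psi_b$ is a $C^2$ elliptic $N$-function whose characteristics depend only on $q$, so in particular $(\psi_b)^*$ is well defined and the Orlicz lemmas of this appendix apply to it uniformly in $b$.)

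For the first inequality, the substitution $s=\lambda u$ gives $\psi_b(\lambda t)=\lambda^2\int_0^t(b+\lambda u)^{q-2}u\,\dd u$. Comparing $(b+\lambda u)^{q-2}$ with $(b+u)^{q-2}$ by means of $b+u\le b+\lambda u\le\lambda(b+u)$ for $\lambda\ge1$ and $\lambda(b+u)\le b+\lambda u\le b+u$ for $\lambda\le1$, and distinguishing $q\ge2$ from $1<q<2$, one finds in each of the four cases that the correct power of $\lambda$ is produced, so that $\psi_b(\lambda t)\le\max\{\lambda^q,\lambda^2\}\,\psi_b(t)$ — in fact with constant $1$, uniformly in $b$ (the cases $\lambda=0$ and $t=0$ being trivial).

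For the second inequality I would pass to the conjugate by scaling. Write $p_0:=\min\{q,2\}$ and $p_1:=\max\{q,2\}$. The inequality just proved, read for $\lambda\le1$ and for $\lambda\ge1$ respectively, is equivalent to $\psi_b(\mu\tau)\ge\mu^{p_0}\psi_b(\tau)$ for $\mu\ge1$ and $\psi_b(\mu\tau)\ge\mu^{p_1}\psi_b(\tau)$ for $\mu\le1$. Now fix $\lambda\ge1$, set $\mu:=\lambda^{\,p_0'-1}\ge1$, and substitute the dummy variable $t=\mu\tau$ in $\psi_b^*(\lambda s)=\sup_{t\ge0}\big(\lambda st-\psi_b(t)\big)$; using $\lambda\mu=\lambda^{p_0'}$ and $\mu^{p_0}=\lambda^{(p_0'-1)p_0}=\lambda^{p_0'}$ one obtains
\begin{align*}
\psi_b^*(\lambda s)\le\sup_{\tau\ge0}\big(\lambda^{p_0'}s\tau-\lambda^{p_0'}\psi_b(\tau)\big)=\lambda^{p_0'}\psi_b^*(s).
\end{align*}
The same computation with $\mu:=\lambda^{\,p_1'-1}\le1$ and the second lower bound gives $\psi_b^*(\lambda s)\le\lambda^{p_1'}\psi_b^*(s)$ for $\lambda\le1$. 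Since $p_0'=\max\{q',2\}$ and $p_1'=\min\{q',2\}$, the two cases combine to $(\psi_b)^*(\lambda t)\le\max\{\lambda^{q'},\lambda^2\}\,(\psi_b)^*(t)$, again with constant $1$. (Equivalently, one could use $\psi_b^*(t)\sim t\,(\psi_b')^{-1}(t)$ together with the elementary estimate $(\psi_b')^{-1}(t)\sim(b^{q-1}+t)^{q'-2}t$ — verified by splitting $t\lesssim b^{q-1}$ and $t\gtrsim b^{q-1}$ — to identify $(\psi_b)^*(t)\sim\int_0^t(b^{q-1}+z)^{q'-2}z\,\dd z$, and then apply the first part with $(q,\kappa)$ replaced by $(q',b^{q-1})$; in this route the constant $c$ of the statement enters through the equivalence $\sim$.)

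The only place requiring care is the conjugate estimate, where the bookkeeping $q\leftrightarrow q'$, $\min\leftrightarrow\max$ takes place and where it is easy to slip a sign: the point is to pick the scaling exponent $p_i'-1$ so that the two powers of $\lambda$ — one from $\lambda\mu$, one from the homogeneity bound $\mu^{p_i}$ on $\psi_b$ — coincide, leaving precisely a factor $\lambda^{p_i'}$. Everything else, namely the reduction $\rho_a=\psi_{\kappa+a}$ and the four-case computation for the first inequality, is elementary.
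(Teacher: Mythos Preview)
The paper does not actually supply a proof of this lemma: it is listed in the appendix among standard Orlicz-space facts (alongside the results cited from \cite{DieE08,DR}) and is simply stated. So there is no ``paper's own proof'' to compare against, and your argument stands on its own.

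Your proof is correct. The key observation that $\rho_a(t)=\int_0^t(\kappa+a+\tau)^{q-2}\tau\,\dd\tau$ has exactly the same structure as $\rho$ with $\kappa$ replaced by $\kappa+a$ is precisely the right reduction, and it immediately delivers uniformity in $a$. The four-case verification of the first inequality via the substitution $s=\lambda u$ and the elementary bounds $b+u\le b+\lambda u\le\lambda(b+u)$ (respectively $\lambda(b+u)\le b+\lambda u\le b+u$) is clean and in fact gives constant $1$. The duality argument for the conjugate is also correct: the identity $(p_0'-1)p_0=p_0'$ is exactly what makes the two powers of $\lambda$ match after the substitution $t=\mu\tau$, and the bookkeeping $p_0'=\max\{q',2\}$, $p_1'=\min\{q',2\}$ lines up with $\max\{\lambda^{q'},\lambda^2\}$ in the two regimes $\lambda\ge1$ and $\lambda\le1$. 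The alternative route you sketch via $(\psi_b)^*(t)\sim\int_0^t(b^{q-1}+z)^{q'-2}z\,\dd z$ is the one closer in spirit to how this lemma is usually recorded in the literature (cf.\ Lemma~\ref{lem:shifted2}), but your direct scaling argument is more self-contained and even yields sharp constants.
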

\begin{remark}
\label{A9}
  Let $p\in\mathcal P(Q)$ with $p^{-}>1$ and $p^+<\infty$.  The
  results above extend to the function $\phi^{(\sigma,x)}(t)=\int_0^t
  (\kappa+s)^{p(\sigma,x)-2}s\,\dd s$ uniformly in $x\in\Omega$, where the  constants only depend on $p^-$ and $p^+$.
\end{remark}
\begin{lemma}\label{lemma:growth}
Let $p\in C^{\alpha_x,\alpha_t}(\overline{Q})$ with $p^->1$ and $p^+<\infty$. Let $\bfS:Q\times\R^{N\times n}\rightarrow \R^{N\times n}$ be given by
$$\bfS(t,x,\bfeta)=(\kappa+|\bfeta|)^{p(t,x)-2}\bfeta$$
for some $\kappa\geq0$. Then we have
\begin{align}
\lambda (\kappa+|\bfeta|)^{p(t,x)-2}|\bfxi|^2&\leq D_\bfeta\bfS(t,x,\bfeta)(\bfxi,\bfxi)\leq\,\Lambda (\kappa+|\bfeta|)^{p(t,x)-2}|\bfxi|^2,\label{growth1'}\\
|\bfS(t,x,\bfeta)-\bfS(s,y,\bfeta)|&\leq\,c\,(|t-s|^{\alpha_t}+|x-y|^{\alpha_x})\big(1+|\ln(\kappa+|\bfeta|)|\big)
\label{growth2'}\\
&\quad\times \Big( (\kappa+|\bfeta|)^{p(t,x)-2} + (\kappa+|\bfeta|)^{ p(s,y)-2} \Big) |\bfeta| ,\nonumber
\end{align}
for all $(t,x),(s,y)\in \overline{Q}$ and all $\bfeta,\bfxi\in\R^{N\times n}$ with positive constants $\lambda,\Lambda,c$ and $\kappa\geq0$.
\end{lemma}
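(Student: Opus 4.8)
The plan is to read off both assertions directly from the explicit formula $\bfS(t,x,\bfeta)=(\kappa+|\bfeta|)^{p(t,x)-2}\bfeta$; the two estimates \eqref{growth1'} and \eqref{growth2'} are proved independently of each other, and both are elementary.

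\textbf{Step 1: the ellipticity bound \eqref{growth1'}.} Fix $(t,x)\in\overline{Q}$ and abbreviate $p=p(t,x)$. For $\bfeta\neq0$ a direct computation gives
\begin{align*}
D_\bfeta\bfS(t,x,\bfeta)(\bfxi,\bfxi)=(\kappa+|\bfeta|)^{p-2}|\bfxi|^2+(p-2)(\kappa+|\bfeta|)^{p-3}\frac{(\bfeta:\bfxi)^2}{|\bfeta|}.
\end{align*}
By Cauchy--Schwarz we have $0\leq(\bfeta:\bfxi)^2\leq|\bfeta|^2|\bfxi|^2$, and combining this with $|\bfeta|\leq\kappa+|\bfeta|$ shows that the second summand has modulus at most $|p-2|(\kappa+|\bfeta|)^{p-2}|\bfxi|^2$, while its sign is nonnegative precisely when $p\geq2$. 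Splitting into the cases $p\geq2$ and $1<p<2$ then yields \eqref{growth1'} with $\lambda=\min\{1,p^--1\}$ and $\Lambda=\max\{1,p^+-1\}$, uniformly in $(t,x)$. The value $\bfeta=0$ causes no trouble when $\kappa>0$ (by continuity of $D_\bfeta\bfS$ there, the second summand tending to $0$), whereas for $\kappa=0$ and $p<2$ the map $\bfeta\mapsto\bfS$ fails to be $C^1$ at the origin and \eqref{growth1'} is understood for $\bfeta\neq0$.

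\textbf{Step 2: the continuity estimate \eqref{growth2'}.} Both sides vanish if $\bfeta=0$, so assume $\bfeta\neq0$ and set $a:=\kappa+|\bfeta|>0$, $p_1:=p(t,x)$, $p_2:=p(s,y)$, so that
\begin{align*}
|\bfS(t,x,\bfeta)-\bfS(s,y,\bfeta)|=\big|a^{p_1-2}-a^{p_2-2}\big|\,|\bfeta|.
\end{align*}
Writing $a^{p_1-2}-a^{p_2-2}=\big(\ln a\big)\int_{p_2-2}^{p_1-2}a^{\theta}\,\dd\theta$ and using that $\theta\mapsto a^{\theta}$ is monotone, so that $a^{\theta}\leq a^{p_1-2}+a^{p_2-2}$ for every $\theta$ lying between $p_1-2$ and $p_2-2$, we obtain
\begin{align*}
\big|a^{p_1-2}-a^{p_2-2}\big|\leq|p_1-p_2|\,|\ln a|\,\big(a^{p_1-2}+a^{p_2-2}\big).
\end{align*}
Finally $|\ln a|\leq 1+|\ln(\kappa+|\bfeta|)|$, and the $C^{\alpha_x,\alpha_t}$-regularity of $p$ gives $|p_1-p_2|=|p(t,x)-p(s,y)|\leq c\,(|t-s|^{\alpha_t}+|x-y|^{\alpha_x})$. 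Inserting these two bounds into the previous display and multiplying through by $|\bfeta|$ produces exactly \eqref{growth2'}.

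Both steps are routine; the only bookkeeping to watch is that the constants $\lambda,\Lambda,c$ stay uniform over $\overline{Q}$, which is automatic since they depend on $p$ only through $p^-$, $p^+$ and the H\"older constant of $p$. I do not expect any genuine obstacle here.
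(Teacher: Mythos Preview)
Your proof is correct and follows essentially the same route as the paper: a direct computation of $D_\bfeta\bfS$ for \eqref{growth1'} (the paper detours through the scalar potential $\varphi$, but the resulting formula and case split are identical), and for \eqref{growth2'} the same integral interpolation $a^{p_1-2}-a^{p_2-2}=(\ln a)\int a^\theta\,\dd\theta$ combined with the H\"older continuity of $p$. One cosmetic remark: since $a=\kappa+|\bfeta|$ by definition, the line ``$|\ln a|\leq 1+|\ln(\kappa+|\bfeta|)|$'' is just $|\ln a|\leq 1+|\ln a|$, which is of course true but can be dropped.
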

\begin{proof}
Let $\varphi(t,x,\eta)=\int_0^\eta (\kappa+\xi)^{p(t,x)-2}\xi d\xi$. Then we have
\begin{align*}
\bfS(t,x,\bfeta)&=D_{\bfeta} \varphi(t,x,|\bfeta|)=\frac{\partial_\eta\varphi(t,x,|\bfeta|)}{|\bfeta|}\bfeta,\\
 D_{\bfeta} \bfS(t,x,\bfeta)&=D^2_{\bfeta} \varphi(t,x,|\bfeta|)\\&=\frac{\partial_\eta^2\varphi(t,x,|\bfeta|)|\bfeta|-\partial_\eta\varphi(t,x,|\bfeta|)}{|\bfeta|}\frac{\bfeta\otimes\bfeta}{|\bfeta|^2}+\frac{\partial_\eta\varphi(t,x,|\bfeta|)}{|\bfeta|}I.
\end{align*}
Using that
\begin{align*}
\partial_\eta\varphi(t,x,\eta)&=(\kappa+\eta)^{p(t,x)-2}\eta,\\
\partial_\eta^2\varphi(t,x,\eta)&=(\kappa+\eta)^{p(t,x)-2}+(p(t,x)-2)(\kappa+\eta)^{p(t,x)-3}\eta\\
&=(\kappa+\eta)^{p(t,x)-2}\Big(1+(p(t,x)-2)\frac{\eta}{\kappa+\eta}\Big)
\end{align*}
as well as boundedness of $p$ from below and above we obtain
\begin{align*}
\lambda (\kappa+|\bfeta|)^{p(t,x)-2}|\bfxi|^2&\leq D_{\bfeta}\bfS(t,x,\bfeta)(\bfxi,\bfxi)\leq\,\Lambda (\kappa+|\bfeta|)^{p(t,x)-2}|\bfxi|^2.
\end{align*}
Furthermore, we have
\begin{align*}
&\bfS(s,y,\bfeta)-\bfS(t,x,\bfeta)=(\kappa+|\bfeta|)^{p(s,y)-2}\bfeta-(\kappa+|\bfeta|)^{p(t,x)-2}\bfeta\\
&=\int_0^1\frac{d}{d\theta}(\kappa+|\bfeta|)^{(p(t,x)+\theta(p(s,y)-p(t,x))-2}\dd\theta\bfeta\\
&=\ln(\kappa+|\bfeta|)(p(s,y)-p(t,x))\int_0^1(\kappa+|\bfeta|)^{(p(s,y)+\theta(p(t,y)-p(s,y))-2}\dd\theta\bfeta\\
&=\ln(\kappa+|\bfeta|)(p(s,y)-p(t,x))\int_0^1(\kappa+|\bfeta|)^{p(s,y)+\theta(p(t,x)-p(s,y))}\dd\theta\frac{\bfeta}{(\kappa+|\bfeta|)^2}.
\end{align*}
If $\kappa+|\bfeta|\leq 1$ we estimate further
\begin{align*}
&|\bfS(s,y,\bfeta)-\bfS(t,x,\bfeta)|\\
&\leq\,c\,(|t-s|^{\alpha_t}+|x-y|^{\alpha_x})\ln(\kappa+|\bfeta|)(\kappa+|\bfeta|)^{\min\{p(t,x),(p(s,y)\}}\frac{|\bfeta|}{(\kappa+|\bfeta|)^2}\\
&\leq\,c\,(|t-s|^{\alpha_t}+|x-y|^{\alpha_x})\big(1+|\ln(\kappa+|\bfeta|)|\big)\\
&\quad\times \Big( (\kappa+|\bfeta|)^{ p(t,x)-2} + (\kappa+|\bfeta|)^{p(s,y)-2} \Big) |\bfeta|
\end{align*}
using H\"older continuity of $p$. If $\kappa+|\bfeta|\geq 1$ one has to replace $\min\{p(t,x),(p(s,y)\}$ by $\max\{p(t,x),(p(s,y)\}$ but the final estimate remains the same.
\end{proof}

\centerline{\bf Acknowledgement}
\noindent{
The authors would like to thank the referee for the careful reading of the manuscript and the valuable suggestions.}\\\

\bibliographystyle{spmpsci}
\bibliography{numerics}

\end{document}